\numberwithin{equation}{section}
\def\eps{\varepsilon }
\newcommand\R{\mathbb R}
\newcommand\C{\mathbb C}
\def\eps{\varepsilon}
\newcommand\br{\begin{remark}}
\newcommand\er{\end{remark}}
\newcommand\bp{\begin{pmatrix}}
\newcommand\ep{\end{pmatrix}}
\newcommand{\be}{\begin{equation}}
\newcommand{\ee}{\end{equation}}
\newcommand\ba{\begin{equation}\begin{aligned}}
\newcommand\ea{\end{aligned}\end{equation}}
\newcommand{\bap}{\begin{app}}
\newcommand{\eap}{\end{app}}
\newcommand{\begs}{\begin{exams}}
\newcommand{\eegs}{\end{exams}}
\newcommand{\beg}{\begin{example}}
\newcommand{\eeg}{\end{exaplem}}
\newcommand{\bpr}{\begin{proposition}}
\newcommand{\epr}{\end{proposition}}
\newcommand{\bt}{\begin{theorem}}
\newcommand{\et}{\end{theorem}}
\newcommand{\bc}{\begin{corollary}}
\newcommand{\ec}{\end{corollary}}
\newcommand{\bl}{\begin{lemma}}
\newcommand{\el}{\end{lemma}}
\newcommand{\bd}{\begin{definition}}
\newcommand{\ed}{\end{definition}}
\newcommand{\brs}{\begin{remarks}}
\newcommand{\ers}{\end{remarks}}
\newcommand{\D }{\mathcal{D}}
\newcommand{\RR}{{\mathbb R}}
\newcommand{\TT}{{\mathbb T}}
\newcommand{\const}{\text{\rm constant}}
\newcommand{\Id}{{\rm Id }}
\newcommand{\blockdiag}{{\rm blockdiag }}
\newtheorem{theorem}{Theorem}[section]
\newtheorem{proposition}[theorem]{Proposition}
\newtheorem{corollary}[theorem]{Corollary}
\newtheorem{lemma}[theorem]{Lemma}
\theoremstyle{remark}
\newtheorem{remark}[theorem]{Remark}
\theoremstyle{definition}
\newtheorem{definition}[theorem]{Definition}
\newtheorem{example}[theorem]{Example}
\newcommand\cH{{\mathcal  H}}
\newcommand\cV{{\mathcal  V}}
\newcommand\cW{{\mathcal  W}}
\newcommand\cK{{\mathcal  K}}
\newcommand\cL{{\mathcal  L}}
\newcommand\cE{{\mathcal  E}}
\newcommand\cF{{\mathcal  F}}
\newcommand\cP{{\mathcal  P}}
\newcommand\cX{{\mathcal X}}
\newcommand\cM{{\mathcal M}}
\newcommand\cT{{\mathcal T}}
\newcommand\cS{{\mathcal S}}
\newcommand{\beq}{\begin{equation}}
\newcommand{\eeq}{\end{equation}}
\newcommand{\vp}{\varphi}
\font\tenronde=rsfs10
\font\sevenronde=rsfs7
\font\fiveronde=rsfs5
\newcommand{\mez}{\frac{1}{2}}
\newcommand{\bN}{\mathbb{N}}
\newcommand{\bR}{\mathbb{R}}
\newtheorem{theo}{Theorem }[section]
\newtheorem{prop}[theo]{Proposition}
\newtheorem{cor}[theo]{Corollary}
\newtheorem{lem}[theo]{Lemma}
\newtheorem{ass}[theo]{Assumption}
\newtheorem{rem}[theo]{Remark}
\newtheorem{exams}[theo]{Examples}
\numberwithin{equation}{section}
\newtheorem{thm}{Theorem}[section]
\newtheorem{defn}[thm]{Definition}
\newcommand{\hu}{\hat{u}}
\title{Large-amplitude modulation of periodic traveling waves}
\author{G. M\'etivier}
\address{Universit\'e Bordeaux 1, Talence, France}
\email{ Guy.Metivier@math.u-bordeaux1.fr}
\author{Kevin Zumbrun}
\address{Indiana University, Bloomington, IN 47405}
\email{kzumbrun@indiana.edu}
\thanks{Research of K.Z. was partially supported
under NSF grant no. DMS-0300487}
\begin{document}

\begin{abstract}
We introduce a new approach to the study of modulation of high-frequency periodic wave patterns,
based on pseudodifferential analysis, multi-scale expansion, 
and Kreiss symmetrizer estimates like those in hyperbolic and hyperbolic-parabolic boundary-value theory.
Key ingredients are local Floquet transformation as a preconditioner removing large derivatives
in the normal direction of background rapidly oscillating fronts and 
the use of the periodic Evans function of Gardner to connect spectral information
on component periodic waves to block structure of the resulting approximately constant-coefficient resolvent ODEs.
Our main result is bounded-time existence and validity to all orders of large-amplitude smooth 
modulations of planar periodic solutions of multi-D reaction diffusion systems in the 
high-frequency/small wavelength limit.
\end{abstract}

\date{\today}
\maketitle


\tableofcontents


  \section{Introduction}\label{s:intro}

  As described in a variety of settings \cite{W,HK,DSSS,Se,NR}, there is a fascinating 
  connection between modulation of periodic traveling-wave solutions and quasilinear hyperbolic systems.
  This has led to rich mathematical interactions in both directions between the theories of pattern formation
  and hyperbolic and hyperbolic-parabolic systems \cite{SS,BNSZ,OZ1,OZ2,JZ,JNRZ1,JNRZ2,SSSU,JNRZ3}.
  In this paper, adapting ideas used in \cite{MZ,GMWZ1} to study 
  large-amplitude viscous shock and boundary layers of hyperbolic-parabolic systems, 
  we propose a new approach to the study of modulation in the high-frequency/zero-wavelength limit,
  designed for the treatment of {\it large-amplitude, multi-D} solutions.
Based on pseudodifferential analysis, multi-scale expansion, 
and Kreiss symmetrizer estimates originating in hyperbolic boundary-value theory \cite{K,Ma,M},
this yields in particular bounded-time existence and expansion to all orders of large-amplitude 
smooth modulations for reaction diffusion systems in the high-frequency limit.

\medskip
{\bf Derivative notation:} We use $\D_x$, $\D_t$, etc. to denote the usual differentials with respect to 
$x$, $t$ or other variables.
In the case of a single derivative $\D_x$ or $\D_{t,x}$, this is considered as a row vector
$(\D_{x_1}, \dots \D_{x_d})$ or $(\D_t, \D_{x_1}, \dots \D_{x_d})$, 
so that $\nabla_x=\D_x^T$, $\nabla_{t,x}=\D_{t,x}^T$ in vectorial notation.
Multiple derivatives $\D^j$, corresponding to multilinear maps, are used here only in describing various
error estimates, and for that purpose could equally well be thought of as the total derivative, or
set of all $j$th order derivatives.
For computations we will for ease of reading/writing
use, rather, summation notation with individual partial derivatives $\D_{x_j}$, $\D_t$
or vector calculus notation with the symbols $\nabla_x$, $\nabla_{t,x}$.

\subsection{Basic high-frequency modulation}\label{s:hfmod}

Consider a general reaction diffusion system
 \begin{equation} \label{eq1}
 \eps  \D_t u + f(u)  = \eps^2 \Delta_x u , \quad x\in \R^d, \, u, f \in \R^n,
\end{equation}
in the high-frequency/small-wavelength limit $\eps\to 0^+$.
This corresponds, under the rescaling $(t,x)\to (t/\eps, x/\eps)$ to solutions $u(t/\eps, x/\eps)$
of the unscaled system $\D_t u+ f(u)= \Delta_x u$. 

\begin{ass} \label{ass1}
 There is  a (nontrivial) smooth function $p(k, \theta) $ of $k \in \cK \subset \RR^d $, $\cK$ bounded and
bounded away from the origin, and $\theta$, 
 $2\pi $- periodic in $\theta$, and a smooth function $\omega (k)$ such that  
 \begin{equation}
 \label{prof0}
\omega (k)   \D_\theta p  + f( p ) =  | k | ^2 \D_\theta^2 p.
 \end{equation}
 \end{ass}

 Assumption \ref{ass1} implies that for all $k\in \cK$, 
\begin{equation} \label{planar}
 \overline u (t,x) = p (k, \psi _k (t, x) / \eps), \qquad \psi_k (t,x) = k \cdot x + \omega (k) t 
 \end{equation} 
 is an exact solution of \eqref{eq1}. 
Consider now a {\it smooth modulation} 
\begin{equation}\label{mod}
u^{\eps, a}   (t, x)   =  p(k , \psi/ \eps), 
\end{equation}
where $k$ and $\psi$ are smooth enough functions of $(t, x)$ and $p$ is a smooth enough function of $k$
and $\theta$, with truncation error 
 \begin{equation}
 \label{err}
	 R^{\eps} :=  \eps  \D_t u^{\eps,a} + f(u^{\eps,a})  - \eps^2 \Delta u^{\eps,a}.   
\end{equation}

Computing
	 \ba\label{remcomp}
  \eps  \D_t  u^{\eps, a} &=  \D_t \psi  \D_\theta p + \eps \sum_l  \D_t k_l \D_{k_l}  p ,\\
  \eps  \D_{x_j}  u^{\eps, a} &=  \D_{x_j} \psi  \D_\theta p +\sum_l   \eps \D_{x_j} k_l  \D_{k_l}  p, \\
  \eps^2  \Delta_x   u^{\eps, a}   &=    | \D_x \psi |^2  \D^2_\theta p + \eps \Delta \psi  \D_\theta p 
  + 2  \eps \sum_{j, l}  \D_{x_j} \psi   \D_{x_j} k_l  \D^2_{\theta, k_l} p 
  \\
  \quad & + \eps ^2 \sum_{j, l, m}  \D_{x_j} k_l  \D_{x_j} k_m \D^2_{  k_l, k_m} p , 
  \ea
  and combining, we have that the main ($O(\eps^0)$) term in $R^{\eps} $ is 
 $$
 \D_t \psi  \D_\theta p+ f(p) -   | \D_x \psi |^2  \D^2_\theta p = 
 ( \D_t \psi - \omega (k) )  \D_\theta p+  ( | k|^2 - | \D_x \psi |^2 )  \D^2_\theta p .
 $$
 This vanishes when 
 \begin{equation}\label{modcon}
 \D_t \psi = \omega (\nabla_x \psi) , \qquad  k = \nabla_x \psi,
 \end{equation}
 yielding truncation error $R^\eps= O(\eps)$ formally validating the high-frequency approximation \eqref{mod}.

\subsection{Higher-order expansion}\label{s:allorders}
Continuing, we seek a general {\it multi-scale expansion} \cite{JMR,GMWZ1}
 \begin{equation}\label{highexp}
	 u^{\eps,m} (t,x) = U_{\eps,m} (t, x, \frac{1}{\eps}\psi(t,x) +  \vp_{\eps,m-1}(t, x)   ) 
 \end{equation} 
 (with convention $\vp_{\eps, -1}=0$) with 
 \begin{equation}\label{highdef}
	 U_{\eps,m}(t,x, \theta)  = \sum_{ n=0}^m \eps^n U_n (t, x, \theta) , 
 \qquad
	 \vp_{\eps,m} (t,x) = \sum_{n = 0}^{m} \eps^n \vp_n(t,x) , 
 \end{equation} 
 satisfying the consistency condition
 \be\label{acon}
 R^{\eps,m}:= \eps  \D_t   u^{\eps, m} +  f ( u^{\eps, m})   -   \eps^2 \Delta_x  u^{\eps, m}
 = O(\eps^{m+1})
 \ee
on the residual, or truncation error $R^{\eps,m}$ of the approximate solution $u^{\eps,m}$. 

\begin{ass}\label{ass2}   
For $k \in \cK$, consider the linearization
	\begin{equation}\label{L}
L(k, \D_\theta) = - \omega(k) \D_\theta -  f'(p (k, \theta))  + | k|^2 \D_\theta^2, 
\end{equation} 
of profile equation \eqref{prof0} about $p$, satisfying (by translation invariance)
\begin{equation}
L(k, \D_\theta) \D_\theta p  = 0. 
\end{equation}
Assume that $0$ is a simple eigenvalue of $L(k, \D_\theta)$ in $L^2 (\TT)$, 
with associated eigenfunction $\D_\theta p(k, \, \cdot \, )$.
\end{ass}

Assumption \ref{ass2} is equivalent to {\it transversality} of solutions $p$ of the profile ODE
\eqref{prof0}, which may be recognized as a slightly strengthened, {linear version} of
Assumption \ref{ass1}.

 \br\label{kzerormk}
 For $f$ sufficiently smooth, and any $k\neq 0$ and transversal solution $p$, $\omega$ of \eqref{prof0},
 Assumptions \ref{ass1}-\ref{ass2} hold locally near $(k,\omega,p)$ 
 by the Implicit Function Theorem and smooth dependence of solutions of ODE.
 The limit $k\to 0$, explicitly excluded here, is singular for \eqref{prof0}, hence smooth dependence would
 not necessarily hold at $k=0$.
 More important, $0<|k|<\infty$ excludes spatial periods $X=2\pi \eps /|k|$ of $\infty$ or $0$, as
 will be important at a technical level later on.
 We note that the condition $\D_x \psi=k\neq 0$ is quite restrictive on the possible geometry of
 wave fronts of $u^{\eps,a}$, given by level sets of $\psi$, implying foliation of $\R^d$.
 In particular, there can be no closed level surfaces. 
 \er

Let $H^s$ denote the usual Sobolev spaces with respect to $x$ and $t$, and 
$\cH^s_\eps$ the spaces defined by norms
$\|h\|_{\cH^s_\eps}^2:= \sum_{j=1}^s \eps^{2j} \|\D_{t,x}^j h\|_{L^2}^2$
accomodating presence of the fast variable $\theta=\vp/\eps$.
Note that $\eps^{-1/2}\|\cdot\|_{\cH^s_\eps}$ is equivalent to the usual Sobolev norm in
rescaled variables $(x',t')=(x/\eps, t/\eps)$, hence we have 
for $s\geq [d/2] + 1$, where $[\cdot]$ denotes least integer,
the Sobolev embedding
\be\label{sob}
\|h\|_{L^\infty}\leq C\eps^{-1/2}\|h\|_{\cH^s_\eps}.
\ee
Here and elsewhere, all norms are with respect to $\R^d\times [0,T]$, unless otherwise specified,
in which case we shall write $H^s[T_1,T_2]$, $\cH^s_\eps[T_1,T_2]$, etc. to denote norms with
respect to $\R^s\times [T_1,T_2]$.

With these preparations, we have the following result, established in Section \ref{s:asymptotic}
by induction building on computations \eqref{err}-\eqref{modcon}.

\begin{theo}\label{main1}
Under Assumption \ref{ass2}, if $\psi$ is a  $C^\infty$ solution of eikonal equation 
$\D_t \psi = \omega (\nabla_x \psi)$ on $[0, T] \times \RR^d$, then there are $C^\infty$ 
asymptotic solutions \eqref{highexp}, \eqref{acon} at all orders, with 
\begin{equation}
\label{symb0}
U_0 = p(k(t,x), \theta) , \qquad k = \nabla_x \psi.
\end{equation}
More precisely, for $f\in C^{2(m+1)}$, $\psi\in C^{2(m+1)}$, there are $L^\infty$ solutions to order $n\leq m$, with 
\be\label{asinftybds}
	\|(U_n, \vp_{n-1})\|_{L^\infty}\lesssim\|\D_{t,x}\psi\|_{C^{2n}},
	\quad
	\|R^{\eps,m}\|_{L^\infty} \lesssim \eps^{m+1}\|\D_{t,x}^2\psi\|_{C^{2(m+1)}}
	\ee
	and for $f\in C^{s+1+2(m+1)}$, $\D^2_{t,x} \psi\in H^{s+ 2(m+1)}$, 
	$s\geq 0$, there are $H^s$ solutions to order $n\leq m$, satisfying
\be\label{asHbds}
	\|(U_n, \vp_{n-1})\|_{\cH^s_\eps}\lesssim\|\D_{t,x}\psi\|_{H^{s+2n}},
	\quad
	\|R^{\eps, m}\|_{\cH^s_\eps} \lesssim \eps^{m+1}\|\D_{t,x}^2\psi\|_{H^{s+ 2(m+1)}},
	\ee
	in each case satisfying \eqref{symb0}.
\end{theo}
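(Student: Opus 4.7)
The plan is to build the expansion order by order by induction on $n$, following a standard multi-scale scheme but carrying the phase correction $\vp_{\eps, m-1}$ alongside the amplitudes $U_n$. I would first substitute \eqref{highexp}--\eqref{highdef} into \eqref{eq1}, apply the chain rule as in \eqref{remcomp}, and reorganize the residual $R^{\eps, m}$ as a formal polynomial in $\eps$. At order $\eps^0$, the coefficient reduces via the eikonal \eqref{modcon} to the profile equation \eqref{prof0} at $k = \nabla_x \psi$, forcing \eqref{symb0} and yielding $R^{\eps, 0} = O(\eps)$ exactly as in the introductory computation.

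For the inductive step, at order $\eps^n$ with $n \geq 1$ the residual equation reads
\be
L(k, \d_\theta) U_n = G_n,
\ee
where $G_n$ depends on $\psi$ and its slow derivatives, on the previously constructed $U_0, \dots, U_{n-1}$ and $\vp_0, \dots, \vp_{n-2}$, and linearly on $\d_t \vp_{n-1}$ and $\nabla_x \vp_{n-1}$. By Assumption \ref{ass2}, $L(k, \d_\theta)$ is Fredholm of index zero on $L^2(\TT)$ with one-dimensional kernel spanned by $\d_\theta p$; let $\pi(k, \theta)$ generate the one-dimensional kernel of its $L^2(\TT)$-adjoint, normalized so that $\int_\TT \pi\, \d_\theta p\, d\theta = 1$. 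The solvability condition $\int_\TT \pi\, G_n\, d\theta = 0$, combined with the algebraic identity
\be
L(k, \d_\theta)\, \d_{k_j} p = \d_{k_j}\omega(k)\, \d_\theta p - 2 k_j \d_\theta^2 p
\ee
obtained by differentiating \eqref{prof0} in $k_j$, collapses to a scalar linear transport equation
\be
\bigl(\d_t + \nabla_k \omega(k) \cdot \nabla_x\bigr) \vp_{n-1} = \mathcal{H}_n
\ee
whose characteristic field coincides with that of the eikonal \eqref{modcon}. It is therefore solvable on $[0, T] \times \R^d$ by the method of characteristics with initial datum $\vp_{n-1}|_{t=0} = 0$. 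Once $\vp_{n-1}$ is pinned down, $U_n$ is obtained uniquely by inverting $L(k, \d_\theta)$ on the complement of $\ker L$, singled out by the normalization $\int_\TT \pi\, U_n\, d\theta = 0$.

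The estimates \eqref{asinftybds}--\eqref{asHbds} then follow inductively by tracking derivative losses. Each step consumes at most two derivatives of $\psi$ (one from $k = \nabla_x\psi$, one more from $\Delta\psi$ or $\nabla_x k$), so after $n$ steps one sees $2n$ derivatives of $\psi$, matching the $H^{s+2n}$ regularity in \eqref{asHbds}. Both the transport solve for $\vp_{n-1}$ and the inversion of $L(k, \d_\theta)$ preserve regularity in $\theta$ uniformly in $(t,x)$. The passage from $H^s$ bounds on the slow profiles to $\cH^s_\eps$ bounds on $u^{\eps, m}$ is essentially free, since each slow derivative falling on the fast phase $\theta = \psi/\eps + \vp_{\eps, m-1}$ produces a factor $1/\eps$ that is absorbed by the $\eps$ weight in the $\cH^s_\eps$ norm; the $L^\infty$ statement then follows from \eqref{sob}.

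The main obstacle is verifying that the solvability condition genuinely reduces to a well-posed transport equation with the group velocity $\nabla_k \omega(k)$, rather than to an overdetermined or degenerate system. This relies crucially on the differentiated profile identity displayed above, which forces the coefficients of $\d_t\vp_{n-1}$ and $\nabla_x \vp_{n-1}$ in $\int_\TT \pi\, G_n\, d\theta$ to combine exactly into $\d_t + \nabla_k\omega(k)\cdot\nabla_x$ via the dispersion relation. A secondary, purely bookkeeping difficulty is organizing the Taylor expansions of $f(U_{\eps, m})$ and of $U_{\eps, m}(t, x, \psi/\eps + \vp_{\eps, m-1})$ about the leading phase so that the algebraic structure at each order matches \eqref{L} cleanly and the residual bound with the stated derivative count emerges at the end.
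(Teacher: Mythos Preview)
Your proposal is correct and follows the same multi-scale induction scheme as the paper: substitute the ansatz, collect powers of $\eps$, and at each order use the Fredholm alternative for $L(k,\D_\theta)$ on $L^2(\TT)$ to first fix the phase corrector via the solvability condition and then invert $L$ on the complement of its kernel to obtain the next amplitude, with the regularity bounds following by tracking the two derivatives of $\psi$ consumed at each step. The one substantive difference is that the paper writes the solvability condition as a pure time-ODE $\D_t\vp_n=-\int h\,\cF_n\,d\theta$, whereas you obtain a genuine transport equation $(\D_t+\nabla_k\omega(k)\cdot\nabla_x)\vp_{n-1}=\cH_n$; this is because the paper's displayed expansion of $\eps^2\Delta_x u^\eps$ omits the $\nabla_x\vp$ contributions to the fast-phase gradient, which would otherwise produce the term $-2k\cdot\nabla_x\vp_n\,\D_\theta^2 p$ at order $\eps^{n+1}$. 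Your use of the differentiated profile identity $L\,\D_{k_j}p=\D_{k_j}\omega\,\D_\theta p-2k_j\D_\theta^2 p$ to identify the group velocity is exactly what makes this term combine with $\D_t\vp_n$ into a well-posed scalar transport equation, and is the more careful treatment; either way the equation for $\vp_n$ is solvable on $[0,T]$ and the construction closes with the stated derivative count.
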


\br\label{geormk}
In what follows, we shall assume $\D_{t,x}\psi\in C^{2m}$, $\D_x^2\psi\in H^{s+ 2(m+1)}$, 
to obtain
	for $n\leq m$
\be\label{usedbds}
	\|(U_n, \vp_{n-1})\|_{L^\infty}\lesssim\|\D_{t,x}\psi\|_{C^{2m}}, \quad 
	\|R^{\eps,m}\|_{\cH^s_\eps} \lesssim \eps^{m+1}\|\D_{t,x}^2\psi\|_{H^{s+ 2(m+1)}}.
	\ee
By $\D_t \psi=\omega(\nabla_x \psi)\lesssim \D_x \psi$ and $k=\nabla_x \psi$, these assumptions may be equivalently
phrased as
\be\label{kbds}
k\in C^{2m}, \quad \D_x k \in H^{s+ 2(m+1)},
\ee
implying in particular $k\in L^\infty$ and $\D_x k\in L^2$. The first condition corresponds to
$k\in \cK$ in Assumption \ref{ass1}.
The second imposes additional geometric constraints; for example, radially symmetric configurations of
$k$ are disallowed in dimensions $d\geq 2$, as (since by assumption they are bounded from zero) 
their gradients decay no faster than $|x|^{-1}$, hence just miss being bounded in $L^2$.
Moreover, in all dimensions, $\D_x^2 \psi \in H^{[d/2]+1}$ implies by Sobolev embedding
\be\label{zerolim}
\hbox{\rm $\D_x^2 \psi \to 0$ as $|x|\to \infty$,}
\ee
giving $\D_x k \to 0$ as $|x|\to \infty$.
\er


\subsection{Linear estimates and nonlinear validation}\label{s:linearized}
Next, given an approximate solution $u^{\eps,m}$, we estimate the remainder, or convergence error,
\be\label{remainder}
v^{\eps,m} := u- u^{\eps, m}, 
\ee
where $u$ is an exact solution.  
The equation for $ v = u- u^{\eps, a}$ may be expressed as
 \begin{equation}
  \label{eq2intro}
	 \eps \cP_{u^{\eps,m}}v^{\eps,m} =  -  R^{\eps,m}  + Q(u^{\eps,m}, v^{\eps,m}) := \eps e^{\eps,m} ,
 \end{equation} 
 where
 \be\label{cPdef}
\eps \cP_{u^{\eps,m}}:= \eps  \D_t   +  g^\eps     -   \eps^2 \Delta_x,
\qquad g^\eps (t, x) =  f' (u^{\eps, m}) = G (k, \Psi/ \eps)
\ee
is the linearization of \eqref{eq1} about $u^{\eps,m}$ and
$$
Q:= -\big(f(u^{\eps, m}+v)- f(u^{\eps, m}) - f'(u^{\eps, m})v \big),
$$
the Taylor remainder, is quadratic in $v$ for $\|v\|_{L^\infty}\leq C$.

\begin{ass}\label{ass3}   
For $k \in \cK$, consider the Bloch--Fourier operator
	\begin{equation}\label{B-F}
L_{\xi,\eta}(k, \D_\theta) = -\omega(k) (\D_\theta+i\xi) -  f'(p (k, \theta))  + | k|^2 (\D_\theta+i\xi)^2 - |\eta|^2
\end{equation} 
$\xi\in \R$, $\eta=(\eta_2, \dots, \eta_d)\in \R^{d-1}$ associated with linearized operator \eqref{L},
where $\xi$ is Floquet number and $\eta_2, \dots, \eta_d$ are Fourier frequencies in directions transverse to $k$.
Denoting by $\sigma(L)$ the spectrum of a linear operator $L$, assume that
(i) $L_{0,0}(k, \D_\theta) $ has a simple eigenvalue at $\lambda=0$ and no other pure imaginary eigenvalues, and
(ii) $\Re \sigma \big( L_{\xi,\eta}(k, \D_\theta) \big)\leq - c |(\xi,\eta)|^2$ for some $c>0$.
\end{ass}

Assumption \ref{ass3} may be recognized as the {\it diffusive stability} condition of Schneider \cite{S1},
sufficient \cite{S1,JZ,JNRZ1,JNRZ2,SSSU,JNRZ3} for linearized and nonlinear stability of component planar 
periodic waves \eqref{planar}.
The center of our analysis, and the main contribution of this paper,
is the following result converting the ``local'' spectral stability condition of 
Assumption \ref{ass3} to a global linear estimate.

\begin{theo}\label{main2}
Under Assumption \ref{ass3}, together with 
	$f\in C^{s+1+2(m+1)}$, $\D_x^2\psi\in H^{s+ 2(m+1)}[-T_0,T]$, $\D_{t,x}\psi\in C^{2m}[-T_0,T]$,
	$T_0,T>0$, for every $h\in \cH^s_\eps[-T_0,T]$ with $h$ vanishing for $t<0$, the problem
\be\label{linprob}
\cP_{u^{\eps,m}}v =  h
\ee
	has a unique solution $v\in \cH^{s}_\eps[0,T]$ vanishing for $t<0$, satisfying
\be\label{linbd}
\|v \|_{\cH^{s+1}_\eps} \lesssim \|h \|_{\cH^{s}_\eps}.
\ee
\end{theo}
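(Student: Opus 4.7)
The plan is to reduce the estimate for the rapidly oscillating operator $\eps \cP_{u^{\eps,m}}$ to a family of nonoscillating Bloch-parameterized operators via a local Floquet conjugation, and then derive uniform bounds using Kreiss-type symmetrizers adapted to the block structure provided by Assumption \ref{ass3} and Gardner's periodic Evans function, exactly in the spirit of \cite{MZ,GMWZ1}.

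First, I introduce a two-scale ansatz $v(t,x) = V(t,x,\Psi(t,x)/\eps)$ with $V$ $2\pi$-periodic in the fast variable $\theta$, and rewrite $\cP_{u^{\eps,m}}$ on the lifted space $\R^d\times [0,T]\times \TT$. The principal part in $(\D_\theta,\eps\D_x,\eps\D_t)$ becomes, up to coefficients smooth in the slow variables, the Bloch operator $L_{\xi,\eta}$ of \eqref{B-F} with $(\xi,\eta)$ playing the role of semiclassical dual variables to the slow coordinates along and transverse to $k=\nabla_x \psi$. The point is that this conjugation absorbs the $1/\eps$-oscillation of the coefficient $g^\eps$, reducing matters to a semiclassical pseudodifferential problem with symbol taking values in operators on $L^2(\TT)$.

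Next, I microlocalize in $(\xi,\eta)$. For $|(\xi,\eta)|\gtrsim 1$, Assumption \ref{ass3}(ii) gives uniform parabolic coercivity, hence a resolvent bound with the one-derivative gain built into \eqref{linbd}. For $(\xi,\eta)$ near zero, I invoke the periodic Evans function of Gardner together with Assumption \ref{ass3}(i) to obtain an analytic block decomposition separating a one-dimensional scalar ``modulational'' block around the simple zero eigenvalue of $L_{0,0}$, with eigenfunction $\D_\theta p$, from a complementary block on which the operator is uniformly invertible with parabolic character; the scalar block propagates at the group velocity $\D_k \omega(k)$, which is consistent with the eikonal equation $\D_t\psi = \omega(\nabla_x\psi)$ of Theorem \ref{main1}. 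On this block structure, I construct a symbolic Kreiss-Majda symmetrizer — self-adjoint, positive on the parabolic block and Friedrichs-symmetric on the scalar block — yielding by Gårding inequality and integration by parts the a priori estimate \eqref{linbd} at $s=0$. Higher $\cH^s_\eps$ regularity follows by commuting $\eps\D_{t,x}$-derivatives through $\cP_{u^{\eps,m}}$ and controlling commutators by the $H^{s+2(m+1)}$-bounds on $\D_x k$ supplied by Theorem \ref{main1}. Existence and uniqueness of the causal solution $v$ (vanishing for $t<0$) then follow by adding a small parabolic regularization $+\delta\eps^4\Delta_x^2$ to $\cP_{u^{\eps,m}}$, solving the strictly parabolic Cauchy problem by standard theory, and passing $\delta\to 0$ via the uniform a priori bound.

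The hard step is the microlocal block reduction near $(\xi,\eta)=0$. Assumption \ref{ass3} is a statement about constant-$k$ periodic operators, but the coefficients $k(t,x)$, $\psi(t,x)$ vary slowly, so I must show that the spectral projections and Evans-function-based block change of basis extend to smooth symbols in a semiclassical calculus, remain uniform in $\eps$, and interface correctly across the transition between the low-frequency regime (where the scalar modulational block dominates) and the high-frequency regime (where parabolic coercivity takes over). Getting the Kreiss symmetrizer to be simultaneously polyhomogeneous in $(\xi,\eta)$ and to produce the correct gain of $\eps^{-1}$ in passing from $\eps\cP_{u^{\eps,m}}$ to $\cP_{u^{\eps,m}}$ — so that \eqref{linbd} has the right powers — is the delicate balance that the approach of \cite{MZ,GMWZ1} is designed to handle, and adapting it to the present periodic-Evans/Bloch setting is the main technical content of the proof.
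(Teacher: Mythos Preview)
Your overall strategy---Floquet preconditioning, Evans-function-driven block structure, Kreiss symmetrizers, then globalization and induction on $s$---matches the paper, but the concrete mechanism you propose has two genuine gaps.

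First, the paper does \emph{not} lift $v$ to a periodic function on $\TT$. It changes to coordinates $(t,y,z)$ with $z=\Psi$, writes the equation as a first-order system $\D_z V=(\eps^{-1}|k|^{-1}M+\cdots)V+E$, and treats $\theta=z/\eps$ as an evolution variable on the full line $\R$; the paper explicitly remarks that ``$\theta$ here varies on the line $\R$, as we are not imposing $\theta$-periodicity on $V$.'' Your ansatz $v(t,x)=V(t,x,\Psi/\eps)$ with $V$ $2\pi$-periodic does not represent a general $v\in L^2(\R^d)$. What the paper does instead is perform semiclassical analysis in $(t,y)$ with symbol $(\lambda,\eta)$, obtaining at each frozen $(k,\lambda,\eta)$ a $\theta$-periodic \emph{matrix} ODE with coefficient $\cM(k,\theta,\lambda,\eta)$, then apply Floquet's Lemma (Proposition~\ref{floquet}) to conjugate this to a constant-in-$\theta$, \emph{finite-dimensional} matrix $\cM_1(k,\lambda,\eta)$. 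The Kreiss symmetrizer $\cS$ is then an ordinary matrix symmetrizer for $\cM_1$, not an operator-valued symbol on $L^2(\TT)$, and the Evans function enters by identifying pure imaginary eigenvalues $i\xi$ of $\cM_1(k,\lambda)$ with zeros $\lambda$ of $D(\cdot,\xi)$. In particular $\xi$ is the Floquet exponent appearing in the spectrum of $\cM_1$, not a semiclassical dual variable as you describe.

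Second, you describe the low-frequency neutral block as a ``one-dimensional scalar modulational block'' propagating at speed $\D_k\omega$. This is only case~(i) of Corollary~\ref{c0cor}, valid when $\omega'(k)\ne0$. When $\omega'(k)=0$---which, as Remark~\ref{ozrmk} notes, occurs generically for standing waves---the neutral block of $\cM_1$ at $\lambda=0$ is a $2\times2$ nilpotent Jordan block, i.e.\ a \emph{glancing mode}, and the scalar symmetrizer fails. Handling this case requires the nontrivial Kreiss-type construction carried out in Appendix~\ref{s:kcon}, which is arguably the main technical novelty of the proof; your plan as stated would not cover it.
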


\br\label{comprmk}
Problem \eqref{linprob} may be recognized as an exact analog of \cite[Problem (1.25)]{MZ}, and
Theorem \ref{main2} as a simplified (since not involving conormal derivatives or $L^\infty$ norms
required in the boundary-layer case treated there) version of \cite[Thms. 1.9-1.10]{MZ}.
Similarly as in the boundary-layer case,
applying detailed estimates of \cite{JZ}, appropriately rescaled in $\eps$, one may verify that
linear bound \eqref{linbd} is sharp in the case $k\equiv \const$ of an exactly periodic planar wave.
\er

\br\label{multirmk}
Evidently, for the simple isotropic (Laplacian) diffusion considered here, multi-D diffusive spectral stability 
is equivalent to 1-D diffusive stability, $\eta\equiv 0$, with $\sigma (L_{\xi,\eta})= \sigma (L_{\xi,0} -|\eta|^2).$
\er

Theorem \ref{main2} is established, following the general strategy of \cite{MZ}, by 1. first reducing via 
local coordinate change/pseudodifferential calculus effectively to the corresponding problem on the pseudodifferential
symbol for the local planar problem, then 
2. removing fast oscillations by a periodic Floquet transformation
in the normal direction conjugating the problem to approximately constant-coefficient, and finally 
3. obtaining linearized estimates on the resulting ``averaged'' system by frequency-dependent ``Kreiss-type'' 
energy estimates as in \cite{K,Ma,MZ,GMWZ1}.

Whereas the key conjugation in step 2 was carried out in \cite{MZ} by an asymptotically constant-coefficient
coordinate change (guaranteed by the ``conjugation lemma'' of \cite{MZ}), 
reducing to a known limiting constant-coefficient problem,
the Floquet transformation used here results in a constant-coefficient averaged system that is a priori unknown.
We make important use in step 3 of the {\it periodic Evans function} of Gardner \cite{G} in deducing 
needed averaged structure
from spectral information encoded in Assumption \ref{ass3}.
These steps are carried out in Section \ref{s:remainders}.

From \eqref{linbd}, we readily obtain the following nonlinear convergence result validating 
\eqref{mod}, \eqref{highexp}.

\begin{cor}\label{main3}
Under Assumption \ref{ass3},
	for $f\in C^{s+1+2(m+1)}$, $\D_x^2\psi\in H^{s+ 2(m+1)}$, $\D_{t,x}\psi\in C^{2m}$,
$m\geq 2$, $s>[d/2]$, there is for $\eps>0$ sufficiently small
	a unique solution $u\in \cH^{s+1}_\eps$ of \eqref{eq1}
	satisfying	
	\be\label{presdata}
u|_{t=0}=u^{\eps,m}|_{t=0}
\ee
and
\be\label{rembd}
\|u-u^{\eps,m}\|_{\cH^{s+1}_\eps} + \eps^{1/2} \|u-u^{\eps,m}\|_{C^{s -[d/2]}_\eps} 
\lesssim \eps^{m}\|\D_{t,x}^2\psi\|_{H^{s+ 2(m+1)}}.
\ee
\end{cor}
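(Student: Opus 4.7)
\emph{Proof plan.} I would deduce the corollary from Theorems \ref{main1} and \ref{main2} by a Picard iteration, with the real work consisting of a careful accounting of $\eps$-powers. Setting $v := u - u^{\eps,m}$, equation \eqref{eq2intro} rewrites as
\be\label{vfix}
\cP_{u^{\eps,m}} v \;=\; \eps^{-1}\bigl(-R^{\eps,m} + Q(u^{\eps,m},v)\bigr),
\ee
with the matching condition \eqref{presdata} amounting to $v|_{t=0}=0$. Extending $R^{\eps,m}$, $Q$, and the iterates by zero for $t<0$, one is in the causal setting of Theorem \ref{main2}, which supplies a linear solution operator $\cS:h\mapsto v$ satisfying $\|\cS h\|_{\cH^{s+1}_\eps}\lesssim \|h\|_{\cH^s_\eps}$. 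I would then set $v_0:=0$ and iterate
\[
v_{k+1}\;:=\;\cS\bigl(\eps^{-1}(-R^{\eps,m}+Q(u^{\eps,m},v_k))\bigr).
\]

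The plan is to close the iteration in the ball of radius $M:=C_0\eps^m$ in $\cH^{s+1}_\eps$ for some large $C_0$. Theorem \ref{main1} supplies $\|R^{\eps,m}\|_{\cH^s_\eps}\lesssim \eps^{m+1}$, so the linear forcing contributes $\lesssim \eps^m$. For the quadratic remainder $Q$, the hypothesis $s>[d/2]$ permits a Moser-type tame product estimate in $\cH^s_\eps$, and combining with the Sobolev embedding \eqref{sob} gives
\[
\|Q(u^{\eps,m},v_k)\|_{\cH^s_\eps}\;\lesssim\;\|v_k\|_{L^\infty}\|v_k\|_{\cH^s_\eps}\;\lesssim\;\eps^{-1/2}\|v_k\|_{\cH^{s+1}_\eps}^2,
\]
so $\|v_{k+1}\|_{\cH^{s+1}_\eps}\lesssim \eps^m + \eps^{-3/2}\|v_k\|_{\cH^{s+1}_\eps}^2$. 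For $v_k$ in the ball of radius $M$, the quadratic term is $O(\eps^{2m-3/2})=o(\eps^m)$ when $m\ge 2$, so $v_{k+1}$ stays in the same ball; a parallel estimate on $v_{k+1}-v_k$ yields contraction with rate $O(\eps^{m-3/2})\to 0$. Banach's fixed point theorem then produces a unique $v$ with $\|v\|_{\cH^{s+1}_\eps}\lesssim \eps^m$, and $u:=u^{\eps,m}+v$ is the sought solution of \eqref{eq1}. The $\|\cdot\|_{C^{s-[d/2]}_\eps}$ bound in \eqref{rembd} follows by applying \eqref{sob} to $\eps^j\D_{t,x}^j v$ for each $j\le s-[d/2]$.

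The substantive work has been packaged into Theorem \ref{main2}; what remains is genuinely just bookkeeping, but the bookkeeping is tight and is the main obstacle. Dividing \eqref{eq2intro} by $\eps$ to match the form $\cP v=h$ costs one factor $\eps^{-1}$; the embedding $\cH^s_\eps\hookrightarrow L^\infty$ costs another $\eps^{-1/2}$; these losses must be dominated by the $\eps^{m+1}$ gain in the residual together with the quadratic character of $Q$. The hypothesis $m\ge 2$ is precisely the threshold that makes $\eps^{m-3/2}\to 0$, producing both the self-map property and the contraction in a ball of radius $\eps^m$; at $m=1$ this balance would break down, reflecting a genuine feature of the scaling rather than a defect of the argument.
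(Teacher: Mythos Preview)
Your contraction argument and $\eps$-bookkeeping are correct and match the paper's; the gap is in the sentence ``Extending $R^{\eps,m}$, $Q$, and the iterates by zero for $t<0$, one is in the causal setting of Theorem \ref{main2}.'' Theorem \ref{main2} requires the forcing $h$ to lie in $\cH^s_\eps[-T_0,T]$ \emph{and} vanish for $t<0$. But $R^{\eps,m}|_{t=0}\neq 0$ in general, so its zero extension has a jump at $t=0$; for $s\ge 1$ this extension is not in $\cH^s_\eps[-T_0,T]$ (the $t$-derivative is a measure, not $L^2$), and the linear solution operator $\cS$ is not available. If instead you smooth the jump with a cutoff $\chi(-t/\eps)$ over $[-\eps,0]$, you recover an admissible $h$, but then the solution produced by $\cS$ does not vanish at $t=0$: you get only a \emph{prepared-data} result (this is exactly Proposition \ref{prepprop} and the content of Remark \ref{prepared}), not the prescribed-data statement \eqref{presdata}.

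The paper closes this gap with an initial-layer construction (Section \ref{s:prescribed}): on the short interval $[0,\eps]$ one uses ordinary parabolic well-posedness in rescaled variables (Lemma \ref{fastlem}) to build a solution $w$ of the full $v$-equation with $w|_{t=0}=0$ and $\|w\|_{\cH^{s+1}_\eps}\lesssim\|R^{\eps,m}\|_{\cH^s_\eps}$. One then sets $\tilde v=(1-\chi^\eps(t))v+\chi^\eps(t)w$ with $\chi^\eps(t)=\chi(t/\eps)$, so that the effective forcing $\tilde R^{\eps,m}$ contains the factor $(1-\chi^\eps)$ in front of $R^{\eps,m}$ and hence vanishes \emph{smoothly} for $t\le 0$ while retaining the same $\cH^s_\eps$ size. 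The fixed-point argument you wrote then runs verbatim for $\tilde v$, and since $\chi^\eps(0)=1$, $w|_{t=0}=0$ forces $\tilde v|_{t=0}=0$, giving \eqref{presdata}. Apart from this initial-layer correction your proof is the paper's.
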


\br\label{correctrmk}
Similarly as in \cite{MZ}, we need additional correctors (two here as compared to one in \cite{MZ})
to the basic high-frequency modulation $u^{\eps,a}=u^{\eps,0}$ in order to close the analysis.
\er

Corollary \ref{main3} yields bounded-time existence and rigorous order-$m$ expansion of (possibly) large-amplitude 
modulations $\psi$ satisfying $|\D_{t,x} \psi|_{C^0} , \, \|\D_{t,x}^2\psi\|_{H^{s+ 2(m+2)}}<\infty $
for $m\geq 2$, $s>[d/2]$.

By expanding to an extra order, and combining \eqref{rembd} with \eqref{asHbds}, we find 
for $f\in C^{s+1+2(m+2)}$, $\D_x^2 \psi\in H^{s+ 2(m+2)}$ the improved, $O(\eps^{m+1})$ remainder estimate
\be\label{optrembd}
\|u-u^{\eps,m}\|_{\cH^{s+1}_\eps}\leq 
\|u-u^{\eps,m+1}\|_{\cH^{s+1}_\eps} + \|u^{\eps,m+1}-u^{\eps,m}\|_{\cH^{s+1}_\eps} 
\lesssim \eps^{m+1}\|\D_{t,x}^2\psi\|_{H^{s+ 2(m+2)}}.
\ee
For $m>s$, this implies also the useful standard Sobolev embedding estimate
$$
\|u-u^{\eps,m}\|_{L^\infty} \lesssim \eps^{ m+ 1/2 }\|\D_{t,x}^2\psi\|_{H^{s+ 2(m+2)}}.
$$

\subsection{Discussion and open problems}\label{s:disc}
Corollary \ref{main3} may be recognized, after accounting different scalings, as a multidimensional analog of
the 1D results \cite[Thms. 6.1--6.2]{DSSS}, obtained by quite different techniques.
As noted in \cite{DSSS}, solutions of \eqref{modcon} in general become singular in finite time, hence the bounded-time
assumption is natural for smooth solutions.
However, the analysis of \cite{DSSS}, proceeding by normal form reduction, approximates the function
$\omega(k)$ appearing in modulation equation \eqref{modcon} by its second-order Taylor expansion about a reference
state $k=k_*$, hence is inherently limited to the small-amplitude case $|k-k_*|\ll 1$.
By contrast, our restrictions on $k$ in Assumption \ref{ass1}
are only to guarantee certain 
natural properties of the associated periodic traveling-wave solutions, 
allowing the treatment of large-amplitude solutions $|k-k_*|\gg 1$.

An advantage of our approach is that we obtain a ``prescribed data'' result \eqref{presdata}
yielding a unique exact solution $u$ satisfying the initial data of approximate solution $u^{\eps,m}$,
whereas the corresponding results stated in \cite{DSSS} are ``prepared data'' type, asserting
existence of a nearby exact solution for {\it some}, unspecified, initial data.
More (see Remark \ref{attractrmk}), our analysis gives the ``approximate attraction'' property that
exact solutions of \eqref{eq1} with $u|_{t=0}$ $\eps^{m+1}$-close to $u^{\eps,m}$ in $\cH_\eps^s(\R^d)$
(i) exist up to the full interval of existence $[0,T]$ of $u^{\eps,m}$, and (ii) remain $\eps^m$-close
in $\cH^s_\eps[0,T]$, thus justifying the idea (see \cite{CE,S2,vH,JNRZ3} in related contexts)
of modulation expansions as approximate attracting manifolds.
We discuss these issues further in Section \ref{s:nonlin}.

On the other hand, our use of standard Sobolev norms imposes $\|\D^2\psi\|_{L^2}<\infty$, whereas
the analysis of \cite[Thms. 6.1--6.2]{DSSS}, based on locally square-integrable norms
requires $\|\D_x \psi \|_{L^\infty}\ll 1$, imposing smallness but no localization.
It would be interesting to try to extend our results to the case
$\|\D_x \psi \|_{L^\infty}< \infty$, perhaps by the use of local Sobolev norms as in \cite{DSSS}.
Another interesting direction suggested by the results of \cite{DSSS} would be to incorporate
the diffusive scaling of \cite[\S 4]{DSSS}, leading in place of modulation equation \eqref{modcon}
to a second-order diffusive regularization, and allowing the treatment of additional interesting solutions
such as viscous shock profiles.

The restriction in \eqref{eq1} to isotropic, Laplacian diffusion was made for convenience/simplicity in exposition,
and should in principle be extensible to general strictly parabolic diffusions.  However, in the general case,
the multi-D stability analysis does not reduce to 1-D. A very interesting open problem is to construct Kreiss
type symmetrizers in this ``truly multi-D'' case.
It is interesting to note that the phenomenon of ``glancing'' treated in Appendix \ref{s:kcon}
arises here already in the 1-D case, whereas in hyperbolic BVP theory it is associated only with multi-D
phenomena; this agrees with the intuition that 1-D periodic theory is roughly 1.5-D, due to the incursion of an
additional Floquet number $\xi$ along with the spatial variable $x$.

As a final open problem, we mention the extension of our results to systems of conservation laws or relaxation systems,
for which the associated formal modulation system analogous to \eqref{modcon} is no longer scalar, but of system form.
See, for example the 1-D analysis of \cite{NR}.

  \medskip
  {\bf Acknowledgement.} 
  The second author thanks University of Bordeaux I for its support and hospitality during a March 2015 visit
  in which this project was initiated and partly carried out,
  and Benjamin Melinand for helpful conversation regarding prescribed data and
  initial time layers. Thanks also to the anonymous referees for their careful reading and helpful suggestions
  improving the exposition:
  in particular pointing out the interesting connection to \cite{C} noted in Remark \ref{cormk}
  and suggesting a related improvement (the addition of term $|\tau + \sum_j c_j  \eta_j|^2$ on the right-hand side
  of bound \eqref{symmeq}) in Proposition \ref{symm}.

 
 \section{Asymptotic solutions}\label{s:asymptotic}
 We begin by deriving \eqref{highexp}-\eqref{acon}, starting from first principles, 
 rederiving \eqref{mod} in the course of the analysis.
 We look for asymptotic solutions 
 \begin{equation}
 u^\eps (t,x) = U_\eps (t, x, \frac{1}{\eps}  \psi(t, x) + \vp_\eps(t,x)  ) 
 \end{equation} 
 with 
 $U_\eps(t,x, \theta)  \sim \sum_{n \ge 0} \eps^n U_n (t, x, \theta)$, $2\pi$-periodic in $\theta$, and 
 $\vp_\eps (t,x) \sim \sum_{n \ge 0} \eps^n \vp_n(t,x)$ . 
We have
 \begin{equation*}
 \eps \D_t u_\eps \sim \D_t \psi \D_\theta U_0 + \sum_{n \ge 0}
 \eps ^{n +1} \Big( \D_t \psi D_\theta U_{n+1} + \D_t U_n  + 
 \sum_{p= 0}^n\D_t \vp_p \D_\theta U_{n-p} \Big)  
 \end{equation*}
  \begin{equation*}
  \begin{aligned}
  \eps^2  \Delta_x   u^{\eps}   \sim   
   | \D_x \psi |^2  \D^2_\theta U_0  + \eps  \Big( | \D_x \psi |^2  \D^2_\theta U_1 +  \Delta \psi  \D_\theta U_0 
 &  +    \eps \sum_{j, l}  \D_{x_j} \psi   \D_{x_j} k_l  \D^2_{\theta, k_l} U_0 \Big) 
  \\
 +  \sum_{n \ge 0} \eps^{n+2} \Big( | \D_x \psi |^2  \D^2_\theta U_{n+2}  +  \Delta \psi  \D_\theta U_{n+1} 
  +  & 2   \sum_{j, l}  \D_{x_j} \psi   \D_{x_j} k_l  \D^2_{\theta, k_l} U_{n+1}  
  \\
  &  +   \sum_{j, l, m}  \D_{x_j} k_l  \D_{x_j} k_m \D^2_{  k_l, k_m} U_n \Big)  , 
 \end{aligned} 
 \end{equation*}
 Thus,  plugging the expansion into the equation we get 
   \begin{equation*}
 \label{eq1asym}
 \eps  \D_t u^\eps + f(u^\eps)  - \eps^2 \Delta_x u^\eps \sim   \sum_{n \ge  0 } \eps^n F_n 
\end{equation*}
with  
\begin{equation*}
F_0 = \D_t \psi \D_\theta U_0 + f(U_0) - | \D_x \psi |^2 \D_\theta^2 U_0 
\end{equation*}
and 
  \begin{equation*}
F_{n+1}  =  \cL (t, x , \theta,  \D_\theta)  U_{n+1}   + \D_t \vp_n \D_\theta U_0  + \cF_{n},
\end{equation*}
where
 \begin{equation*}
\begin{aligned}
\cL   (t, x, \theta,  \D_\theta) &= 
	\D_t \psi \D_\theta  +    f'(U_0(t,x, \theta) ) - | \D_x \psi |^2 \D_\theta^2   
	= L(k(t,x), \D_\theta )
	 \end{aligned}
\end{equation*}
and $\cF_n$ depends only on $(U_0, \ldots, U_n)$ and $(\vp_0, \ldots , \vp_{n-1})$ and their derivatives. 
  Thus, we get an asymptotic solution if we solve by induction 
  \begin{equation}
 \label{eqas0}
	  L(k(t,x),\D_\theta)U_0=  0
\end{equation}
  and 
    \begin{equation} \label{eqasn}  
	     L (k(t, x ) ,  \D_\theta)  U_{n+1}   + \D_t \vp_n \D_\theta U_0  = -  \cF_{n}. 
\end{equation}

Under Assumption \ref{ass2}, $0$ is a simple eigenvalue in $L^2 (\TT)$ of
$$
L(k, \D_\theta) = \omega(k) \D_\theta +  f'(p (k, \theta))  - | k|^2 \D_\theta^2
$$
with associated eigenfunction $\D_\theta p(k, \, \cdot \, )$. 
In this case, the range of $L$ is of codimension one and there is $ h(k, \cdot)$, $2\pi$-periodic in 
$\theta$, in the kernel of the adjoint $L^*$,
thus smooth in $\theta$ and in $k$ because of the constant multiplicity, with 
\begin{equation}
\int h(k, \theta) \D_\theta p (k, \theta) d  \theta = 1 
\end{equation}
and such that $f$ belongs to the range of $L(k, \D_\theta)$ if and only if 
\begin{equation}
\label{range}
\int h(k, \theta) f( \theta) d  \theta = 0.  
\end{equation} 
Moreover, there is a partial inverse $R(k) $ of $L(k, \D_\theta)$ such that
$R(k) \D_\theta p(\cdot, k) = 0$ and  if $f$ satisfies 
\eqref{range}, then $L (k, \D_\theta) R(k) f = f $. 

\begin{lem}
\label{lemsmooth}
	For $f, p(k,\cdot)\in C^\infty$, the operator $R(k)$ maps $C^\infty (\TT)$ into $C^\infty (\TT)$ 
and depends smoothly on $k \in \cK$. 
	Moreover, for $f\in C^{m+1}$, $p(k,\cdot) \in C^{m}$, $m\geq 0$, it is bounded on $C^{m}(\TT)$.
\end{lem}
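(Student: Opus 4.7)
The plan is to identify $R(k)$ as the $L^2$-bounded pseudo-inverse of the elliptic second-order ODE operator $L(k,\D_\theta)=-|k|^2\D_\theta^2 + \omega(k)\D_\theta + f'(p(k,\theta))$ on $\TT$, and then deduce the two assertions by an elliptic bootstrap and by spectral projection calculus respectively. Since $|k|\geq c>0$ on $\cK$, this operator is uniformly elliptic, hence Fredholm of index $0$ from $H^2(\TT)$ to $L^2(\TT)$. By Assumption \ref{ass2} its kernel is the one-dimensional span of $\D_\theta p(k,\cdot)$ and its cokernel the span of $h(k,\cdot)$, already normalized in the paper by $\int h\,\D_\theta p\,d\theta=1$. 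For arbitrary $g\in L^2(\TT)$ I would define $R(k)g$ to be the unique $u\in H^2(\TT)$ satisfying $\int u\,\D_\theta p\,d\theta=0$ and
\begin{equation*}
L(k,\D_\theta)\, u \;=\; g - \Bigl(\int h(k,\theta)g(\theta)\,d\theta\Bigr)\D_\theta p(k,\cdot);
\end{equation*}
the required identities $R(k)\D_\theta p=0$ and $L(k,\D_\theta) R(k)g=g$ on the range then follow directly.

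For the $C^m$-boundedness, I would run an elliptic ODE bootstrap. Assumption \ref{ass2} together with continuity of the spectrum of a smoothly varying Fredholm family yields the uniform a priori bound $\|u\|_{L^2}\lesssim\|g\|_{L^2}$ on compact subsets of $\cK$. Solving the ODE pointwise for $u''$, namely
\begin{equation*}
u''(\theta)\;=\;|k|^{-2}\Bigl(\omega(k)u'(\theta) + f'(p(k,\theta))u(\theta) - g(\theta) + \Bigl(\int h\,g\,d\theta\Bigr)\D_\theta p(k,\theta)\Bigr),
\end{equation*}
one sees that if $f\in C^{m+1}$ and $p(k,\cdot)\in C^m$ (so that $f'(p(k,\cdot))\in C^m$), a standard induction on the number of $\theta$-derivatives produces $\|u\|_{C^{m+2}}\lesssim \|u\|_{L^2}+\|g\|_{C^m}$, which \emph{a fortiori} gives $C^m$-boundedness of $R(k)$, uniformly in $k$ on compact subsets of $\cK$.

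For smooth dependence on $k$, I would use spectral projection calculus. Since $0$ is a simple, isolated eigenvalue of $L(k,\D_\theta)$ uniformly on compact subsets of $\cK$, the Riesz projection
\begin{equation*}
P(k)\;=\; -\tfrac{1}{2\pi i}\oint_{|\lambda|=r}\bigl(\lambda-L(k,\D_\theta)\bigr)^{-1}\,d\lambda
\end{equation*}
onto $\ker L(k,\D_\theta)$ depends smoothly on $k$, and so then do the eigenvectors $\D_\theta p(k,\cdot)$ and $h(k,\cdot)$ obtained by projecting fixed reference vectors. Differentiating the identity $L(k,\D_\theta)R(k)g = g - \langle h(k,\cdot),g\rangle\,\D_\theta p(k,\cdot)$ in $k$ and reapplying $R(k)$ expresses $\D_k R(k)g$ in terms of $R(k)g$, $(\D_k L)R(k)g$, and derivatives of the spectral projectors, all of which are already controlled by the previous step; induction on the number of $k$-derivatives then yields $C^\infty$ smoothness in $k$. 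The only mild obstacle is the uniformity in $k\in\cK$ of the a priori $L^2$ pseudo-inverse bound, which follows from the simple-eigenvalue hypothesis of Assumption \ref{ass2} combined with Kato-type continuity of the spectrum for smoothly varying elliptic Fredholm families.
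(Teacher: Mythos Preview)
Your argument is correct and follows essentially the same approach as the paper: both proofs establish $C^m$-boundedness by an elliptic bootstrap, inducting on the number of $\theta$-derivatives of $u=R(k)g$ using the ODE $Lu=\tilde g$ with $\tilde g$ the projection of $g$ onto the range. The paper phrases the induction slightly differently---applying $\partial_\theta^m$ directly to $Lw=\tilde f$ and isolating $L(\partial_\theta^m w)$ up to commutators with the variable coefficient $f'(p(k,\theta))$---but this is the same mechanism as your pointwise-solve-for-$u''$ iteration.

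The one genuine difference is that you actually prove smooth dependence on $k$ via Riesz projections and implicit differentiation, whereas the paper's proof addresses only the $C^m$-boundedness in $\theta$ and leaves the $k$-smoothness implicit (it was already remarked before the lemma that $h(k,\cdot)$ is smooth in $k$ ``because of the constant multiplicity,'' and the same reasoning is tacitly understood for $R(k)$). Your spectral-projection treatment makes this step explicit and self-contained, which is a modest gain in rigor at the cost of some length.
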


\begin{proof}
	The statement is clear for $m=0$. For higher $m$, it may be obtained by induction, applying $\D_\theta^m$
	to the equation $Lw=\tilde f$ for $w=Rf$, 
	where $\tilde f:=(\Id-\Pi_0)f$ is the projection of $f$ onto $h^\perp$,
	with 
	$$
	\|\tilde f\|_{C^m}\lesssim \|f\|_{C^m},
	$$
	then rearranging to express $ L (\D_\theta^m w)$ as $ \D_\theta^m \tilde f$ 
	plus the sum of products of lower $\theta$-derivative terms in $w$ and the variable coefficient
	$f'(p(k,\theta)$ in $L$, the latter bounded by our smoothness assumptions on $f$ and $p$. 
	This yields $\|\D_\theta^m Rf\|_{L^\infty}= \|\D_\theta^m w \|_{L^\infty}
	\lesssim \|\tilde f\|_{C^m} \lesssim \| f\|_{C^m}$ as claimed.
\end{proof}

\begin{proof}[Proof of Theorem \ref{main1}]
The first equation \eqref{eqas0} is satisfied if  $U_0$ is given by \eqref{symb0}. 
Next, \eqref{eqasn} is satisfied if
\begin{equation} 
\D_t \vp_n (t,x) = - \int h(k(t, x), \theta)  \, \cF_n (t, x, \theta)  d \theta 
\end{equation} 
and 
\begin{equation}
U_{n+1} (t, x, \theta) =  -   R(k(t, x))\big(  \cF_n - \D_t \vp_n (t,x) \D_\theta U_0) . 
\end{equation}
By Lemma~\ref{lemsmooth}, one checks by induction that for $f, \psi \in C^\infty$
the $U_n$ and $\vp_n$ are $C^\infty$ functions of $(t, x, \theta)$.
Moreover, for $f\in C^{s+2(m+1)}$, $\psi\in C^{2(m+1)}$ and $f\in C^{s+1+2(m+1)}$,
$\psi\in H^{s+ 2(m+1)}$, respectively, one has bounds \eqref{asinftybds} and \eqref{asHbds}, respectively.
\end{proof} 


 \section{Linear estimates}\label{s:remainders}
 
 Consider an approximate solution 
 \begin{equation}
 u^{\eps, m} (t,x) = \sum_{ n =0}^m  \eps^n U_n (t, x, \Psi / \eps ) , \ 
	 \quad \Psi = \psi +  \eps \sum_{n=0} ^{m-1} \eps^n \vp_n  = \psi + \eps \vp , 
 \end{equation}
 where for ease of writing we have suppressed dependence of $\Psi$ upon $\eps,t,x$.
 Then
 \begin{equation}
  \label{err1}
 \eps  \D_t   u^{\eps, m} +  f ( u^{\eps, m})   -   \eps^2 \Delta_x  u^{\eps, m} =   R^{\eps,m}   
	 = O (\eps^{m+1}) . 
 \end{equation} 
The  equation for $ v = u- u^{\eps, m}$ is 
 \begin{equation}
  \label{eq2}
 \eps  \D_t v  +  g^\eps  v   -   \eps^2 \Delta_x v  =  -  R^{\eps,m}  + Q(u^{\eps,m}, v) := \eps e^\eps . 
 \end{equation} 
 where 
 $g^\eps (t, x) =  f' (u^{\eps, m})$ 
 and $Q$ is quadratic in $v$ for bounded $v$.

 Local to any point $(t,x)=(\underline t, \underline x)$, we may introduce new spatial coordinates
 \begin{equation}
	 z = \Psi (t, x), \; y=(y_2, \dots, y_d) 
 \end{equation}
 with $\nabla_x{y_j}$ orthonormal and orthogonal to $\nabla_x z $ at $(\underline x, \underline t)$,
 and $y_j$ constant along integral curves of $\nabla \Psi$, hence $\D_x y \nabla_x z=
 \D_x y \nabla_x \Psi =0 $ at all $(t,x)$.
 Here, we are using the assumed property, inherited for $|\nabla_x \Psi|$, that $|\nabla_x  \psi|$ 
 is bounded and bounded from zero. 
 
 Then, 
 \ba\label{trans}
 \D_t v&= \D_z v \D_t z +\D_y v\D_t y +  D_t v=  D_z v \D_t \Psi +\D_y v\D_t y +  D_t v,\\  
 \D_x v&= \D_z v \D_x z + \D_y v \D_x y = \D_z v \D_x \Psi + \D_y v \D_x y ,\\
 \ea
 and so, using $\D_x y \nabla_x \Psi =0 $, and orthonormality of $\{\nabla_x y_j\}$ at 
 $(t,x)=(\underline t, \underline x)$, 
 $$
 \begin{aligned}
	 \Delta_x v&= \nabla_x \cdot (\D_x v)^T
	 = \nabla_x \cdot   \big(\nabla_x \Psi \D_z v\big) +\nabla_x \cdot \big(\sum_j \nabla_x {y_j}\D_{y_j} v\Big)\\
	 &=  \big((\Delta_x \Psi)\D_z v + \nabla_x (\D_z v)\cdot \nabla_x \Psi\big)
	 + 
	 \sum_j \big( 
	 \nabla_x(\D_{y_j} v)  \cdot \nabla_x {y_j} + \Delta_x {y_j} \D_{y_j} v \big)
	 \\
	 &=  \big((\Delta_x \Psi)\D_z v + \D_x (\D_z v) \nabla_x \Psi\big)
	 + 
	 \sum_j \big( \D_x (\D_{y_j} v)  \nabla_x {y_j} + \Delta_x {y_j} \D_{y_j} v \big)
	 \\
	 & = 
	  (\Delta_x \Psi) \D_z v + |\D_x \Psi|^2 \D_z^2 v + 2 (\D_y\D_z v) (\D_xy \nabla_x \Psi) ) 
	  +  \sum_{ij}(\D_{y_i}\D_{y_j} v)(\nabla_x y_i\cdot \nabla_x y_j) \\
	  &\quad +\sum_j (\Delta_x {y_j}) \D_{y_j} v \\
	  &=
	  |\D_x \Psi|^2 \D_z^2 v + \Delta_y v + (\Delta_x \Psi) \D_z v + o(1)|\D^2_y v| + O(|\D_{y_j} v|), 
 \end{aligned}
$$
where $o(1)\to 0$ as $(t,x)\to (\underline t, \underline x)$.

 Thus, in a neighborhood of $(\underline x, \underline t)$,  \eqref{eq2} is transformed to
\ba\label{eq3}
 \eps  \D_t v  + \eps \sum_j c_j \D_{y_j} v
 +&g^\eps  v   + \eps  a \D_z   v  - \eps^2  |k|^2  \D_z^2 v -\eps^2\Delta_y v\\
 &+ o(1)\eps^2|\D_y^2 v| + O(\eps^2)|\D_y v|=
	  - R^\eps +  Q(u^{\eps,m}, v) := \eps e^\eps   ,
 \ea
 where
 \begin{equation}\label{kaG}
	 k:=\D_x \Psi, \quad a = \omega (k) + \eps a_1(t,x), \quad
 c_j=\D_t y_j, \quad
	 \hbox{\rm and $g^\eps (t, x) =  G (k, z/ \eps) + \eps g_1(t,x,z/\eps)$,} 
 \end{equation}
 with 
 $$
 \begin{aligned}
	 a&= \eps^{-1}\D_t \Psi +  \Delta_x \psi - \eps^{-1} \omega(\nabla_x \Psi)\\
	 &=  \eps^{-1} \D_t \psi +  \D_t \phi +  \Delta_x \psi - \eps^{-1}\omega(\nabla_x \psi) +   
	 \eps^{-1}\big(\omega(\nabla_x \psi) - \omega(\nabla_x \Psi)\big)\\
	 &= \D_t \phi + \Delta_x \psi + 
	 \eps^{-1}\big(\omega(\nabla_x \psi) - \omega(\nabla_x \psi + \eps \nabla_x\phi)\big),\\
	 G(k,\cdot)&:= f'(U^0(t,x, \cdot))=f'(p(k, \cdot)),\\
	 g_1&= f'(u^{\eps,m}(t,x, \cdot))- f'(U^0(t,x, \cdot))= f'\big(U^{0}(t,x, \cdot)+\eps\sum_{j=1}^m 
 \eps^{j-1}U^{j}(t,x, \cdot)\big)- f'(U^0(t,x, \cdot)
 ,
 \end{aligned}
 $$
 and $o(1)\to 0$ with the size of the neighborhood about $(\underline x,\underline t)$.
 Here, $g_1$ is controlled in relevant norms by 
 $\sum_{j=1}^m \eps^{j-1}U^{j}(t,x, \cdot)$ and derivatives of $f'$, and $a$ by $\psi$, $\phi$, $\omega$ 
 and their derivatives.

 \br\label{iso_rmk}
 The careful choice of time-varying coordinates $y$ is made here to avoid cross diffusion terms in 
 the $(z,y)$ representation, thus preserving up to absorbable errors and transverse drift
 the isotropic form of the equations and allowing the reduction of symmetrizer calculations
 to the one-dimensional case.
 In treating the case of general, anisotropic diffusion, there would be no advantage to such coordinates,
 and no harm to choosing a constant coordinate frame $y$ analogously as in \cite{MZ}.
 \er

 We now mimic \cite{MZ}\footnote{Compare principal terms of \eqref{system} with the equivalent \cite[eq. (2.13)]{MZ}.} 
 and write \eqref{eq3} as a system
 \begin{equation} \label{system}
	 \D_z  V  = \Big(\frac{1}{\eps k }  M + O(1) +o(\eps |\D_y^2)|) +O(\eps|\D_y|) \Big)  V  + E , 
 \qquad V =  \begin{pmatrix} v \\ \eps  k \D_z v  \end{pmatrix} , 
 \qquad  E   =  \begin{pmatrix} 0  \\ \frac 1k    e^\eps   \end{pmatrix}  , 
 \end{equation} 
 where
 \begin{equation}
 M (t, x, \eps \D_t) =      
	 \begin{pmatrix} 0 & 1 \\ \eps \D_t + 
 \eps \sum_j c_j \D_{y_j} 
	 + G (k, z / \eps )- \eps^2 \D_y^2   &    \omega(k)   \end{pmatrix}. 
 \end{equation} 
We will perform a semi-classical pseudo-differential analysis in $t$ and $y$, and replace 
 $\eps \D_t$ and $\eps \D_y$ by their symbols $\lambda=i \tau$ and $i\eta$, $\eta=(\eta_2,\dots,\eta_d)$.
See Appendix \ref{s:paradiff_app} or \cite[\S3.1]{MZ} for a brief description of the relevant tools, phrased in
the paradifferential calculus of Bony \cite{B}.
 Moreover,   we consider \eqref{system} as an evolution equation in the fast variable
 $\theta:=z/\eps$, 
 and  this yields to consider the system
 \be\label{exact}
 \D_\theta \cV  =  \Big( \cM +O(1) +o(\eps |\eta|^2) +O(\eps|\eta|) \Big) \cV + \cE ,
 \qquad \theta \in \R,
 \ee
with
  \begin{equation}\label{Mexact}
\cM  (k, \theta  , \lambda, \eta)  =
\frac{1}{k}  \begin{pmatrix} 0 & 1 \\ (i\tau +
 \eps \sum_j c_j i \eta_j + \eps|\eta|^2)   + G (k, \theta)   &    \omega (k)   \end{pmatrix},
 \end{equation} 
 or, dropping error terms,
 \begin{equation}
 \label{modeq}
 \D_\theta \cV  =  \cM \cV + \cE.
 \end{equation} 

 Note that $\theta$ here {\it varies on the line $\R$}, as we are not imposing $\theta$-periodicity
 on $V$; note also that here $k=\nabla_x \Psi=\nabla_x \psi +O(\eps)$, allowing an $O(\eps)$ perturbation of
 the prescription of Section \ref{s:asymptotic}.
 
Our goal is to prove the following basic $L^2$ estimates for the solutions of \eqref{system}: 

\begin{prop}\label{Vprop}
There is $\gamma _0 \ge 0$ such that  for $\gamma + \eps|\eta|^2 \ge \gamma_0$,
solutions $V$ of \eqref{system} satisfy
\begin{equation}\label{Vest}
\big\| V \big\|_{L^2_\gamma}  \lesssim   \big\| E \big\|_{L^2_\gamma} ,
\end{equation} 
where $L^2_\gamma = e^{ - \gamma t} L^2 (\RR_t \times \RR_x)$ and $V$, $E$ are supported
on a sufficiently small neighborhood of $(\underline x, \underline t)$.
\end{prop}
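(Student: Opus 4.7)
The plan is to establish \eqref{Vest} by reducing via paradifferential calculus to the symbol ODE \eqref{modeq} in the fast variable $\theta$, conjugating away the $\theta$-dependence by a periodic Floquet transformation, and closing with a Kreiss-type symmetrizer on the resulting approximately constant-coefficient system.  Concretely, I would first localize in $(t,x)$ near $(\underline t,\underline x)$ and apply the paradifferential calculus summarized in Appendix \ref{s:paradiff_app} to quantize $\eps\D_t\leftrightarrow i\tau=\gamma+i\underline\tau$ and $\eps\D_y\leftrightarrow i\eta$.  The principal symbol of \eqref{system} is $\cM/\eps$ of order $(|\tau|+\gamma+\eps|\eta|^2)/\eps$, so taking $\gamma+\eps|\eta|^2\geq\gamma_0$ large allows the $O(1)+o(\eps|\eta|^2)+O(\eps|\eta|)$ remainders in \eqref{exact} to be absorbed as small perturbations of any uniform estimate obtained for \eqref{modeq}.

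For the symbol ODE, since $G(k,\theta)$ is $2\pi$-periodic in $\theta$ the monodromy of $\cM$ over one period produces a smooth $2\pi$-periodic conjugator $\cT(k,\theta;\tau,\eta)$ reducing \eqref{modeq} to the constant-in-$\theta$ form $\D_\theta\widetilde\cV=\bar\cM(k;\tau,\eta)\widetilde\cV+\widetilde\cE$.  The eigenvalues of $\bar\cM$ are the Floquet exponents of $\cM$, and Gardner's periodic Evans function identifies these with values of $\mu$ for which $i\tau+\eps\sum_j c_j i\eta_j+\eps|\eta|^2$ is an eigenvalue of $L_{-i\mu/|k|,\eta/|k|}(k,\D_\theta)$.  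Under Assumption \ref{ass3}, this spectral dictionary implies that for $\gamma+\eps|\eta|^2\geq\gamma_0$ the matrix $\bar\cM$ has no purely imaginary spectrum and splits consistently into stable and unstable subspaces, except for one isolated simple branch approaching the imaginary axis at the origin $(\tau,\eta,\gamma)=0$ corresponding to the zero mode of $L_{0,0}$; near this branch, the block-structure normal form discussed in Appendix \ref{s:kcon} extracts a scalar glancing block.

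With this block structure in hand, one constructs a Kreiss-type symmetrizer $\cS(k;\tau,\eta,\gamma)=\cS^*$ satisfying $\Re(\cS\bar\cM)\gtrsim(\gamma+\eps|\eta|^2)\Id$ by block-diagonally gluing an elliptic symmetrizer on the hyperbolic blocks to Kreiss' sign-indefinite symmetrizer on the glancing block, following \cite{K,Ma,MZ,GMWZ1}.  Pairing $\cS\widetilde\cV$ against the conjugated equation, integrating in $\theta\in\R$, controlling the contributions at $\theta=\pm\infty$ by the exponential dichotomy of $\bar\cM$, and returning to physical $(t,y)$ via Plancherel and the paradifferential calculus then produces \eqref{Vest}.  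I expect the main obstacle to be the glancing regime near $(\tau,\eta,\gamma)=(0,0,0)$: smoothly (not merely holomorphically) extending the spectral projectors of $\bar\cM$ across the branch point, and verifying uniformity of the Floquet/Evans construction in $k\in\cK$ so that the $\eps$-dependent transverse drift $\sum_j c_j\eta_j$ is absorbable, is precisely where the hypothesis $|k|$ bounded above and away from zero enters essentially.
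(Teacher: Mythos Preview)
Your overall architecture---paradifferential reduction, periodic Floquet conjugation, Evans-function dictionary, Kreiss symmetrizer---matches the paper's approach for \emph{bounded} frequencies, but there are two genuine gaps.

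\medskip

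\textbf{Missing frequency splitting.} You treat all $(\tau,\eta)$ at once via Floquet conjugation, but the paper splits into bounded and high frequency regimes $\zeta:=|\lambda+i\sum_j c_j\eta_j|+\eps|\eta|^2\lessgtr R$ and uses \emph{different} arguments in each. The Floquet conjugator $\cW(k,\theta,\lambda,\eta)=e^{\theta\cM_1}\cX^{-1}(\theta)$ and the symmetrizer $\cS$ of Proposition~\ref{symm} are only constructed for bounded $\zeta$; you have not shown them uniformly bounded as $|\tau|\to\infty$, nor that the commutator errors from the paradifferential calculus remain controllable there. In the high-frequency regime the paper bypasses Floquet entirely and works directly on the periodic system, obtaining the different bound $\Re(\cS M)\gtrsim\sqrt{\gamma+\eps|\eta|^2}$ (see Remark~\ref{scalermk}); the two scalings are genuinely distinct. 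Your claim that $\Re(\cS\bar\cM)\gtrsim(\gamma+\eps|\eta|^2)\Id$ uniformly is not established, and in fact the paper's bounded-frequency symmetrizer gives instead $\Re(\cS\cM_1)\gtrsim(\gamma+|\tau+\sum_j c_j\eta_j|^2+|\eta|^2)\Id$.

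\medskip

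\textbf{The neutral block is not always scalar.} You assert that Appendix~\ref{s:kcon} ``extracts a scalar glancing block,'' but this is backwards: Corollary~\ref{c0cor} shows the neutral block is scalar only when $\omega'(k)\neq 0$, and is a $2\times 2$ nilpotent Jordan block when $\omega'(k)=0$---a case that, per Remark~\ref{ozrmk}, occurs generically for stationary patterns. Appendix~\ref{s:kcon} is precisely the nontrivial $2\times 2$ construction, requiring the delicate identity $e_{0,2}f\kappa=0$ of Lemma~\ref{aereal} to render the apparently singular coefficient $\sigma f\kappa/\tau$ smooth. This is the hardest part of the symmetrizer construction and you have not addressed it.

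\medskip

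Finally, your mention of ``controlling the contributions at $\theta=\pm\infty$ by the exponential dichotomy of $\bar\cM$'' is unnecessary and misleading: this is a whole-line problem in $\theta$ with $V\in L^2$, so the boundary terms from integrating $\D_\theta(\cS V,V)$ simply vanish; no dichotomy is invoked.
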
 

Instead of $V$ we introduce $\tilde V = e^{ - \gamma t} V$, and dropping the tildes we are reduced to proving 
\begin{equation}
\label{mainest}
\gamma \big\| V \big\|_{L^2}  \lesssim    \big\| E \big\|_{L^2} 
\end{equation} 
for the solutions of \eqref{exact}, where now, setting $ \lambda:= \gamma + i\tau,$
 \begin{equation}
 M (t, x,  \lambda, \eta) =   
\begin{pmatrix} 0 & 1 \\ (\lambda + \sum_j c_j i \eta_j +
\eps|\eta|^2) + G (k, z / \eps )   &    \omega(k)   \end{pmatrix} .
\end{equation} 

\subsection{Paradifferential calculus and proof of the main estimates}\label{s:paradiff}
We split $V$ into high and low (and medium) frequencies 
\begin{equation}
	V_l  = \chi \big( \zeta(\eps D_t, \eps \gamma, \eps D_y)\big) V, 
	\qquad V_h = \Big(1 - \chi \big( \zeta (\eps D_t, \eps \gamma, \eps D_y)\big)\Big) V 
\end{equation}
where $\zeta:=|\gamma + i(\D_t + \sum_j c_j i \eta_j) | + \eps |\D_y|^2$  and
$\chi$ is a $C^\infty$ cutoff function  equal to $1$ on a large ball $B(0,R)$ to be chosen later on, and
zero outside $B(0,R+1)$.
The commutator of $\chi (\eps D_t,\eps \gamma, D_y)$ with the equation is 
$O(1) V$ (see Propositions \ref{j11} and \ref{j14a}, Appendix \ref{s:paradiff_app}),
hence can be absorbed by choosing $\gamma_0$ large enough.
So we are reduced to proving the estimates for $V_l$ and $V_h$ separately. 

\subsection{Low and medium frequencies} \label{s:lf}
For $\zeta:=|\gamma + i(\D_t + \sum_j c_j i \eta_j) |+ \eps |\D_y|^2$ 
in a bounded region we use the following reduction. 
 
\begin{prop}[Floquet's Lemma] \label{floquet}
There exists an invertible smooth  periodic  matrix-valued function $\cW (k, \theta, \lambda,\eta)$, 
such that the change of coordinates $\cV_1 = \cW \cV $ reduces \eqref{modeq} to 
\begin{equation}\label{e:ccv}
	\D_\theta \cV_1 =\cM_1 \cV_1 + \cE_1 
\end{equation}
 where $\cM_1 = \cM_1 (k, \lambda,\eta)$ is independent of $\theta$. Equivalently, $\cW$ solves 
 \begin{equation}
 \label{conj}
 \D_\theta \cW  + \cW \cM = \cM_1 \cW.  
 \end{equation}
\end{prop}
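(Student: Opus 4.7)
The plan is to invoke the classical Floquet theorem, adapted to the smooth parameter dependence on $(k,\lambda,\eta)$. Let $\Phi(\theta;k,\lambda,\eta)$ denote the fundamental matrix solution of the homogeneous equation $\D_\theta\Phi=\cM\Phi$ with $\Phi(0)=I$. Standard ODE theory together with smoothness of $\cM$ in its arguments gives that $\Phi$ is globally defined on $\theta\in\R$, invertible, and jointly smooth in $(\theta,k,\lambda,\eta)$. The $2\pi$-periodicity of $\cM$ in $\theta$ gives, by uniqueness of solutions, the Floquet identity $\Phi(\theta+2\pi)=\Phi(\theta)M$, where the monodromy $M(k,\lambda,\eta):=\Phi(2\pi;k,\lambda,\eta)$ is invertible and smooth in parameters.

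For each fixed $(k,\lambda,\eta)$, choose a simple closed contour $\Gamma\subset\CC\setminus(-\infty,0]$ enclosing $\sigma(M)$, and define
\[
\cM_1(k,\lambda,\eta):=\frac{1}{2\pi}\log M:=\frac{1}{(2\pi)(2\pi i)}\oint_\Gamma(\log z)(zI-M)^{-1}\,dz ,
\]
which produces a locally smoothly varying matrix logarithm satisfying $e^{2\pi\cM_1}=M$. Now set
\[
\cW(k,\theta,\lambda,\eta):=e^{\theta\cM_1(k,\lambda,\eta)}\,\Phi(\theta;k,\lambda,\eta)^{-1}.
\]
Both factors are smooth and invertible, hence so is $\cW$. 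A direct differentiation, using the identity $\D_\theta(\Phi^{-1})=-\Phi^{-1}\cM$, yields $\D_\theta\cW=\cM_1\cW-\cW\cM$, which is exactly the conjugation identity \eqref{conj}. Periodicity in $\theta$ follows from $e^{2\pi\cM_1}=M$ combined with the Floquet identity:
\[
\cW(\theta+2\pi)=e^{\theta\cM_1}e^{2\pi\cM_1}\bigl(\Phi(\theta)M\bigr)^{-1}=e^{\theta\cM_1}MM^{-1}\Phi(\theta)^{-1}=\cW(\theta).
\]
Applying $\cV_1=\cW\cV$ to \eqref{modeq} then produces \eqref{e:ccv} with $\cE_1=\cW\cE$.

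The one delicate point is the smooth \emph{global} dependence of $\log M$ on $(k,\lambda,\eta)$: should an eigenvalue of $M$ cross the chosen branch cut as parameters vary, the contour $\Gamma$ must be redrawn, and a single smooth selection may fail to exist. This is harmless in the present application, however, because Proposition \ref{floquet} is to be used only on the bounded low- and medium-frequency region of Section \ref{s:lf} with $k$ confined to the compact set $\cK$: one may cover this compact parameter set by finitely many open sets on each of which a smooth branch is available, and the subsequent symmetrizer analysis needs only uniform bounds on $\cW$, $\cW^{-1}$, and $\cM_1$, which glue in the obvious way.
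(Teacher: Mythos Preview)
Your argument is correct and follows essentially the same route as the paper: both define $\cW=e^{\theta\cM_1}\Phi^{-1}$ with $\Phi$ the fundamental matrix and $\cM_1$ a logarithm of the monodromy, then verify \eqref{conj} and periodicity directly. You add useful detail the paper omits, namely the smooth parameter dependence of $\cM_1$ via the holomorphic functional calculus and the observation that branch-cut obstructions are handled by a finite cover of the compact parameter set; one minor quibble is that your initial contour prescription $\Gamma\subset\CC\setminus(-\infty,0]$ tacitly assumes no negative real eigenvalues of $M$, but your subsequent discussion of moving the branch cut resolves this.
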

\begin{proof}
Let $\cX ( \theta)$ be the fundamental matrix of the system
 \eqref{modeq}, 
 $$
 \D_\theta \cX  = \cM \cX , \quad \cX (0) = \Id \qquad 
 \Rightarrow \qquad 
 \D_\theta \cX^{-1}   = -  \cX^{-1}  \cM , \quad \cX^{-1}  (0) = \Id.  
 $$
Consider a  constant matrix $\cM_1$ and $\cW = e^{  \theta\cM_1 } \cX^{-1}  (\theta)$. Then 
$$
\D_\theta  \cW  =  \cM_1 \cW - \cW\cM  . 
$$
Thus $\cW$ conjugates the system \eqref{modeq} to the constant coefficient system \eqref{e:ccv}. Moreover, 
$\cW$ is periodic if and only if $\cW (1) = \Id$, that is 
 \begin{equation}
 e^{\cM_1} = \cX (1). 
 \end{equation} 
Because $\cX(1)$ is invertible, we can choose a logarithm $\cM_1 = \ln \cX (1)$. 
\end{proof}

 Accordingly, we make the change of unknowns
 \begin{equation}
 V_1 (t, z)  = \cW (k,  z / \eps, \eps \D_t +  \eps \gamma ) \tilde \chi (\eps \D_t, \eps \gamma) V_l(t, z)
 \end{equation}
 where we now consider $k$ as a given function of the variables $t$, $y$, and $z$ and $\tilde \chi = 1$ 
 on the support of $\chi$.  The symbolic calculus shows that the commutators are $O(1) V_l $ and 
  \begin{equation}
 \label{system2}
	  \D_z  V_1  = \frac{1}{\eps  }  \Big(\cM_1 (k, \eps \D_t+  \eps \gamma, \eps \D_y) \Big)V_1 + E_1    
 \end{equation} 
 with $E_1 = \Big( O(1) +o(\eps |\eta|^2) +O(\eps|\eta|) \Big)  V_1   + O(1) E$. 
 Next we use the method of symmetrizers. 
 
 \begin{prop} \label{symm} For $ \lambda + i\sum_j c_j  \eta_j) + |\eta|^2$  bounded, $\lambda=\gamma +i\tau$, 
	there exist locally smooth symmetrizers for $\cM_1$, that is, matrices 
$\cS (k,  \lambda,\eta ) $, $C^1$ in $(\lambda,k,\eta)$ such that $ \cS = \cS^*$, $|\cS|$ uniformly bounded, and
\begin{equation}\label{symmeq}
\Re \cS (k,  \lambda, \eta)  \cM_1 (k, \lambda, \eta)    =  (\gamma+ |\tau + \sum_j c_j  \eta_j |^2+ |\eta|^2) 
\Gamma (k, \lambda,\eta), \qquad \Gamma \ge \Id.  
 \end{equation}
 \end{prop}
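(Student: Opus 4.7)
My plan is to construct $\cS$ locally in $(k,\lambda,\eta)$ by first identifying the spectral structure of $\cM_1$ via Floquet theory and Gardner's periodic Evans function, then block-diagonalizing into hyperbolic pieces plus a single ``glancing'' scalar branch, and finally assembling standard Kreiss-type symmetrizers on each block.

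First I would identify the spectrum of $\cM_1$. Since $\cM_1=\ln\cX(1)$ with $\cX$ the monodromy of \eqref{modeq}, its eigenvalues $\mu$ are exactly the Floquet exponents: $e^\mu\in\sigma(\cX(1))$. Gardner's periodic Evans function matches such $\mu$, with associated Floquet number $\xi:=\mu/i$, to the dispersion relation $\lambda+i\sum_j c_j\eta_j\in\sigma(L_{\xi,\eta}(k,\D_\theta))$. Assumption \ref{ass3} then translates to the statement that at $(\lambda,\eta)=(0,0)$ the matrix $\cM_1$ has a simple eigenvalue at $0$ and all other eigenvalues uniformly off $i\R$, while for small $(\lambda,\eta)$ with $\Re\lambda\ge 0$ the only eigenvalue that can approach $i\R$ is the smooth branch $\mu_*(k,\lambda,\eta)$ bifurcating from $0$.

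Next I would use Dunford projectors to smoothly block-diagonalize $\cM_1$ into $\cM_1^{\rm hyp}$ (spectrum bounded away from $i\R$, splitting further into stable/unstable subblocks) and a scalar block $\mu_*$. On $\cM_1^{\rm hyp}$, the standard signed Lyapunov construction yields a symmetrizer $\cS^{\rm hyp}$ with $\Re(\cS^{\rm hyp}\cM_1^{\rm hyp})\ge c\,\Id$, which trivially dominates $\gamma+|\tau+\sum_j c_j\eta_j|^2+|\eta|^2$ on bounded parameter regions. For the glancing block, standard simple-eigenvalue perturbation theory applied to the dispersion relation $\lambda+i\sum_j c_j\eta_j\in\sigma(L_{\xi,\eta})$ combined with Assumption \ref{ass3}(ii) yields
\[
\mu_*(k,\lambda,\eta)=\alpha(k)\bigl(\lambda+i{\textstyle\sum}_j c_j\eta_j\bigr)+\beta(k)|\eta|^2+O\bigl((|\lambda|+|\eta|)^3\bigr),
\]
with $\Re\alpha>0$ and $\Re\beta>0$. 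The Kreiss glancing choice $\cS^{\rm glance}:=\overline{\alpha(k)}/|\alpha(k)|$ then gives $\Re(\cS^{\rm glance}\mu_*)\ge c(\gamma+|\tau+\sum_j c_j\eta_j|^2+|\eta|^2)$ on a small neighborhood of the origin. Setting $\cS:=P^*\,\diag(\cS^{\rm hyp},\cS^{\rm glance})\,P$, with $P$ the block-diagonalizing similarity, and patching via a parameter-space partition of unity completes the construction.

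The main obstacle is the analysis of the glancing branch. Two points need care: first, smoothness of $\mu_*$ in $(\lambda,\eta)$, which rests on the \emph{simplicity} of the $\mu=0$ Floquet exponent---equivalent via the Gardner factorization of the Evans function to Assumption \ref{ass3}(i); second, the sign of the $|\eta|^2$-coefficient $\Re\beta$, which is not automatic from the algebraic spectral picture but follows from the quantitative diffusive stability bound $\Re\sigma(L_{\xi,\eta})\le -c|(\xi,\eta)|^2$ of Assumption \ref{ass3}(ii). Once both are in hand, the sign arrangement producing the precise right-hand side $\gamma+|\tau+\sum_j c_j\eta_j|^2+|\eta|^2$ reduces to a now-standard Kreiss-type glancing computation, essentially identical in structure to those carried out in \cite{MZ,GMWZ1}.
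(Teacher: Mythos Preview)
Your outline correctly identifies the Evans-function/Floquet link and the split into hyperbolic and neutral pieces, but there is a genuine gap in the treatment of the neutral block: it is \emph{not} in general scalar, and $\mu_*$ is \emph{not} in general a smooth function of $\lambda$. The error lies in your asserted equivalence between Assumption~\ref{ass3}(i) and simplicity of $\mu=0$ as an eigenvalue of the \emph{matrix} $\cM_1(k,0,0)$. What Assumption~\ref{ass3}(i) actually gives, via the Evans function $D(\lambda,\xi)=\det(e^{2\pi\cM_1}-e^{2\pi i\xi})$, is $\D_\lambda D(0,0)\neq 0$; this forces the zero-eigenspace of $\cM_1(k,0,0)$ to consist of a \emph{single Jordan block}, but says nothing about its size $r$. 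The size is instead dictated by the order of vanishing of the neutral curve $\lambda_*(\xi)=-i\omega'(k)\xi-b(k)\xi^2+\dots$ at $\xi=0$: one gets $r=1$ when $\omega'(k)\neq 0$ and $r=2$ when $\omega'(k)=0$. In the latter case $\mu_*(\lambda)\sim c\sqrt{\lambda}$ has a square-root singularity, your Taylor expansion $\mu_*=\alpha\lambda+\dots$ fails, and the scalar symmetrizer $\bar\alpha/|\alpha|$ is not even defined. The case $\omega'(k)=0$ is not exotic: it occurs generically for stationary waves by the reflection symmetry $x\mapsto -x$.

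The paper's proof accordingly bifurcates after block reduction. When $\omega'(k)\neq 0$ your scalar argument essentially works, with $\alpha=-1/\omega'(k)$ real (not of positive real part as you write); the choice $s=-\omega'(k)$ together with the second-order expansion and $b>0$ produces the required $|\tau|^2$ contribution. When $\omega'(k)=0$, however, one must carry out a genuine Kreiss glancing construction on a $2\times 2$ block $m(\lambda,\kappa)=J+O(\lambda,\kappa)$, $\kappa=k-k_*$, seeking a Hermitian $s=\bigl(\begin{smallmatrix}\alpha&1+i\sigma\\1-i\sigma&\beta\end{smallmatrix}\bigr)$ with entries determined by a carefully arranged linear system so that $\Re(sm)\gtrsim\gamma+|\tau|^2$; this is the substantive part of the proof (Appendix~\ref{s:kcon} in the paper), and your outline does not address it.
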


\begin{cor}\label{lfcor}
On a neighborhood of $(\underline {x}, \underline{t})$ such that error term $o(\eps |\eta|^2)$ is sufficiently small
	compared to $\eps |\eta|^2$, $V_l$ satisfies 
	\begin{equation}\label{preVest}
	 (\gamma + \eps |\tau + \sum_j c_j  \eta_j |^2+ \eps |\eta|^2) 
		\big\| V_l  \big\|_{L^2}  \lesssim   \big\| E \big\|_{L^2} + \big\| V_l \big\|_{L^2}. 
\end{equation} 
\end{cor}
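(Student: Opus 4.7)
The plan is to establish \eqref{preVest} via a standard symmetrizer energy estimate applied to the conjugated system \eqref{system2}, then pull back to $V_l$ using the boundedness of the Floquet transformation $\cW$. Since $\cW$ and $\cW^{-1}$ are smooth, bounded, and $z$-independent, and all commutators with the Fourier cutoff $\tilde\chi$ contribute only $O(1)\|V_l\|$ errors (by the paradifferential calculus of Appendix \ref{s:paradiff_app}), it suffices to prove the analogue of \eqref{preVest} for $V_1 = \cW\tilde\chi V_l$.

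First I would form the quadratic quantity $Q(z) := \Re\langle \cS V_1(z), V_1(z)\rangle_{L^2(\R_t \times \R^{d-1}_y)}$, where $\cS = \cS(k, \eps D_t + \eps\gamma, \eps D_y)$ is the symmetrizer of Proposition \ref{symm} quantized as a semi-classical pseudodifferential operator in $(t,y)$. Because $\cS$ is self-adjoint and $z$-independent, substituting \eqref{system2} gives
\begin{equation*}
\D_z Q(z) = \frac{2}{\eps}\Re\langle \cS\cM_1 V_1, V_1\rangle + 2\Re\langle \cS E_1, V_1\rangle.
\end{equation*}
Then I would apply the symmetrizer identity \eqref{symmeq} together with sharp Garding in semi-classical calculus to obtain
\begin{equation*}
\Re\langle \cS\cM_1 V_1, V_1\rangle \;\geq\; c\bigl\langle \bigl(\eps\gamma + \eps^2\bigl|\tau + \sum_j c_j\eta_j\bigr|^2 + \eps^2|\eta|^2\bigr) V_1, V_1\bigr\rangle - C\eps\|V_1\|^2,
\end{equation*}
the weight being read as the corresponding Fourier multiplier.

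Integrating $\D_z Q$ over $z\in\R$, since $V_1$ is compactly supported near $(\underline{t},\underline{x})$ the boundary terms vanish; combining with the Garding bound and dividing by $\eps$ yields
\begin{equation*}
\Bigl\|\bigl(\gamma + \eps\bigl|\tau + \sum_j c_j\eta_j\bigr|^2 + \eps|\eta|^2\bigr)^{1/2} V_1\Bigr\|_{L^2}^2 \;\lesssim\; \|V_1\|_{L^2}^2 + \|E_1\|_{L^2}\|V_1\|_{L^2}.
\end{equation*}
To conclude, I would absorb the three contributions in $E_1 = O(1)V_1 + o(\eps|\eta|^2)V_1 + O(\eps|\eta|)V_1 + O(1)E$ as follows: the $o(\eps|\eta|^2)V_1$ term absorbs into the coercive $\eps|\eta|^2$-weight on the left-hand side precisely by the smallness hypothesis of the corollary on the neighborhood of $(\underline{t},\underline{x})$; the $O(\eps|\eta|)V_1$ term is handled by Young's inequality $\eps|\eta| \leq \tfrac{1}{2}\eps|\eta|^2 + \tfrac{1}{2}$; and the $O(1)V_1$ simply produces the admissible $\|V_l\|_{L^2}$ term on the right-hand side of \eqref{preVest}.

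The main technical obstacle will be to verify that the various quantization errors---the Garding remainder, the commutator between $\cS$ and $\cM_1$, and the commutators arising from $\cW$ and $\tilde\chi$---are each $O(\eps)$-small in the semi-classical calculus rather than merely $O(1)$, so that after the $1/\eps$ prefactor in the energy identity they collapse to $O(1)\|V_1\|^2$ contributions absorbable into $\|V_l\|_{L^2}$ on the right-hand side. This is standard in the low-frequency regime since all symbols involved lie on a compact set and the coefficient $k(t,x) \in C^{2m}$ with $m \geq 2$ provides ample regularity; the necessary operator bounds are collected in Appendix \ref{s:paradiff_app}.
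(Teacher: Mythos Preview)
Your proposal is correct and follows essentially the same symmetrizer energy-balance argument as the paper. Two small corrections worth noting: first, $\cW$ is not $z$-independent but $2\pi$-periodic in $\theta=z/\eps$ (this is harmless, since you only use that $\cW,\cW^{-1}$ are bounded); second, $\cS$ depends on $z$ through $k=k(t,y,z)$, so $\D_z Q$ picks up an additional term $\Re\langle(\D_z\cS)V_1,V_1\rangle$---but since $k$ varies on the slow scale this is $O(1)\|V_1\|_{L^2}^2$ and absorbs into the $\|V_l\|_{L^2}$ on the right, exactly as the $-\tfrac12\D_z\cS$ contribution in the paper's operator $K$.
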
  
 \begin{proof}
 Use the energy balance
	 \begin{equation}\label{ebal}
\Re  \big(   \cS E_1 , V_1\big)_{L^2}  = \frac{1}{\eps} \Re  \big(   \cS \cM_1 V_1 , V_1\big)_{L^2} 
	 + o(1)\frac{1}{\eps} \|V_1 \|_{L^2} 
+    \big(  K V_1 , V_1\big)_{L^2}
 \end{equation} 
 $$
 K = - \mez \D_z \cS +  \eps^{-1} 
 \Re op (\cS \cM_1)  + \eps^{-1}  \Re \Big(  op( \cS) op ( \cM_1 ) - op (\cS \cM_1 ) \Big) 
 $$
 where $op$ denotes the semiclassical quantification of symbols. By the symbolic calculus, the last 
 term is $O(1)$ and the second one is $\eps^{-1} op (\Re \cS \cM_1) +  O(1)$. 
	 Finally, by \eqref{symmeq} applied to \eqref{system2},
	 $$
	 \eps^{-1} op (\Re \cS \cM_1) = (\gamma+\eps|\tau + \sum_j c_j  \eta_j |^2 + \eps|\eta|^2) op( \Gamma),$$
	 which proves that the right-hand side of \eqref{ebal} is 
$ \gtrsim   (\gamma + \eps |\tau + \sum_j c_j  \eta_j |^2+ \eps |\eta|^2) 
	 \big\| V_1 \big\|^2_{L^2}   + O ( \big\| V_1  \big\|_{L^2} ).  $
Meanwhile, the left-hand side is $\lesssim 
	 \big\| E \big\|^2_{L^2}   + 
	 \Big( O(1) +o(\eps |\eta|^2) +O(\eps|\eta|) \Big) \big\| V_1  \big\|_{L^2}^2 $, where
	 $o(\eps |\eta|^2)\ll \eps |\eta|^2$ and $O(\eps |\eta|)= \sqrt{\eps}\sqrt{O(\eps |\eta|^2)}=o(1)$.
	 Combining, and absorbing error terms, we obtain the result.
For details of the pseudodifferential computations used here, see Appendix \ref{s:paradiff_app} or \cite[\S3.1]{MZ}.
 \end{proof} 

From \eqref{preVest}, the estimate \eqref{Vest} follows for $\gamma$ large enough, completing the proof
of Proposition \ref{Vprop} for low and medium frequencies.

\subsubsection{Proof of Proposition~\ref{symm}} 
It remains to establish existence of symmetrizers, Proposition~\ref{symm},
for the averaged coefficient matrix $\cM_1$ of \eqref{e:ccv}.
To this end, we first deduce the eigenstructure of $\cM_1$ from Assumption \ref{ass3}, via the
{\it periodic Evans function} of Gardner \cite{G}, which, in the coordinates of \eqref{e:ccv}, takes
the simple form $D(\lambda, \xi,\eta)= \det \big( e^{2\pi \cM_1(k,\lambda, \eta)}- e^{2\pi i\xi} \big).$
Evidently analytic with respect to $\cM_1$, $\xi$, the Evans function has the
fundamental property \cite{G,Z2} that zeros of $D(\cdot, \xi,\eta)$ agree in location {\it and multiplicity}
with eigenvalues of the Bloch-Fourier operator $\cL_{\xi,\eta}$ of \eqref{B-F}.

Observing as in Remark \ref{multirmk} that
$\lambda$, $\eta$ enter $\cM_1$ only in the combinations $\tilde \lambda:=(\lambda + |\eta|^2)$
and $\tilde \tau:= \tau + \sum_j c_j  \eta_j$, we see that it is sufficient
to treat the 1-D case $\eta\equiv 0$.
For simplicity, {\it take $\eta\equiv 0$ from now on, and consider the 1-D Evans function}
\be\label{1DEvans}
D(\lambda, \xi)= \det \big( e^{2\pi \cM_1(k,\lambda )}- e^{2\pi i\xi} \big),
\ee
$\cM_1(k,\lambda)=\cM_1(k,\lambda,0)$, and its relation to the Bloch operator $\cL_\xi=\cL_{\xi,0}$.

By the spectral mapping theorem, zeros $\lambda$ of $D(\cdot, \xi)$ correspond to pure imaginary (matrix) eigenvalues
$\mu= i\xi$, (mod $2\pi$) of $\cM_1(k,\lambda)$.
But, by the properties of the Evans function, these also correspond to (operator) eigenvalues $\lambda$ of $\cL_\xi$.
Thus, by Assumption \ref{ass3}, $\cM_1(k,\lambda)$ has no pure imaginary eigenvalues for $\gamma=\Re \lambda \geq 0$,
except for the eigenvalue $\mu=0$ (mod $2\pi$) at $\lambda=0$, which, by choice of the logarithm function 
in the proof of Proposition \ref{floquet}, may be normalized as $\mu=0$.

\medskip

({\it Medium frequencies.}) For medium frequencies, $1/R\leq |\lambda|\leq R$, we have by continuity of spectra and 
compactness in $(\lambda,k)$ that $\cM_1$ has a uniform spectral gap 
$\Re \sigma(\cM_1(\lambda, k))\geq c_0>0$, whence
there exist smooth coordinate transformations $T(\lambda,k)$
reducing $M_1$ to form 
$$
T\cM_1 T^{-1}(\lambda,k)=:\tilde{\cM}_1= \begin{pmatrix} P_+& 0\\0 & P_-\end{pmatrix}(\lambda,k),
$$
where $P_+, -P-\gtrsim 1$. Since 
$\gamma +|\tau| \sim |\lambda| \lesssim 1$ by assumption, hence also $\gamma +|\tau|^2\lesssim 1$, we thus have
$$
P_+, -P-\gtrsim 
\gamma +|\tau|^2 .
$$
By Lyapunov's Lemma, there exist $S_+$, $S_-$ symmetric with $\Re (SP) \gtrsim \gamma + |\tau|^2 $, hence 
$ \tilde{\cS}:=  \begin{pmatrix} S_+& 0\\0 & S_-\end{pmatrix}$ is a symmetrizer for $\tilde{\cM}_1$, and
	$ \cS:=|T^{-1}|^2 T^*\tilde{\cS}T $ is a symmetrizer for $\cM_1$, with
$$
\Re(\cS \cM_1)=|T^{-1}|^2 T^* \big( \Re (\tilde{\cS}\tilde{\cM}_1)\big) T = |T^{-1}|^2
(\gamma+|\tau|^2) T^* \tilde {\Gamma}T=: (\gamma +|\tau|^2)\Gamma,
$$
where $\tilde{\Gamma}\geq \Id$ and thus $\Gamma\geq |T^{-1}|^2 (T^*\tilde {\Gamma}T)\geq \Id$.
We note in passing that this argument demonstrates the important observation of Kreiss
\cite{K} that the property of existence of a symmetrizer to be invariant
under smooth coordinate transformations, a fact we shall use freely below.

\medskip

({\it Low frequencies.}) We now come to the key, low-frequency case $|\lambda|\leq 1/R$, $R>0$ sufficiently large,
where lies the main difficulty of the symmetrizer construction.
Here, we have by Assumption \ref{ass3} that the eigenvalues of $\cM_1$ split into a strongly
stable subset with real part strictly negative, a strongly unstable subset with real part strictly positive, 
and a single small eigenvalue $\mu_*$ that is uniformly spectrally separated from both,
associated with the ``neutral stability'' curve 
$$
\{\lambda:\, \lambda=\lambda_*(\xi)\},
$$
where $\lambda_*(\xi)$ 
is the eigenvalue of $\cL_\xi$ bifurcating from the simple ``translational'' eigenvalue $\lambda=0$ of $\cL_0$.

By spectral separation of these three groups of eigenvalues, there exists a smooth coordinate transformation 
$T(\lambda,k)$ transforming $\cM_1$ to block-tridiagonal form
$$
\tilde {\cM}_1=\begin{pmatrix} P_+& 0&0\\0 & P_- &0 \\ 0 & 0 & m \end{pmatrix}(\lambda,k),
$$
where $P_+, -P-\gtrsim 1 \gtrsim \gamma $. Taking $\cS_\pm$ as in the previous case, we see that it is sufficient
to find a symmetrizer $s$ for $m$, in which case 
$ \cS:= \blockdiag \{  S_+, S_-, s\}$ is a symmetrizer for $\tilde{\cM}_1$, and we are done.
We are thus reduced to constructing a symmetrizer for the block $m$ associated with the small eigenvalue $\mu_*$,
considered as a separate analytic function $m(\lambda,k)= m_0 + \lambda m_1 + \dots$.
By a further coordinate transformation, we may reduce to the case that $m_0=m(0)$ is in Jordan form.
The treatment of this neutral block hinges on the following linear-agebraic observation.

\begin{lemma}\label{blocklem}
Let $d(\lambda,\xi) := \det \big( e^{2\pi m(\lambda )}- e^{2\pi i\xi} \big)$ in a neighborhood of
	$\lambda=0$, with $m : \C \to \C^{r\times r}$ in $C^s$, $s\geq 1$, $m(0)$ a nilpotent standard Jordan form,
and $\D_\lambda d(0,0)\neq 0$, and let $\lambda(\xi)\in C^1$ be the unique local function 
defined implicitly by $d(\lambda(\xi),\xi)=0$.
Then, $m(0)$ consists of a single Jordan block, and $m_1:=m'(0)$ has nonvanishing $r$-$1$ entry $\alpha=-\D_\lambda
	d(0,0)$.
Moreover, for $j\leq s$, 
\be\label{keyrel}
	\frac{d^j \lambda(0)}{j!}=\begin{cases}
0 & \hbox{\rm for $ j<r$},\\
 (i)^r/\alpha & \hbox{\rm for $j=r$}.
	\end{cases}
	\ee
\end{lemma}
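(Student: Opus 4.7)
The plan is to combine a Jordan-structure argument based on kernel dimensions with a Newton-polygon expansion of the reduced determinantal equation. The key reduction is that, to leading order near $(\lambda,\xi)=(0,0)$,
\[
e^{2\pi m(\lambda)}-e^{2\pi i\xi}\Id = 2\pi\bigl(m(\lambda)-i\xi\,\Id\bigr) + O\bigl((|\lambda|+|\xi|)^{2}\bigr),
\]
so by multilinearity of the determinant
\[
d(\lambda,\xi) = (-1)^{r}(2\pi)^{r}\,p(i\xi,\lambda) + O\bigl((|\lambda|+|\xi|)^{r+1}\bigr),
\qquad p(t,\lambda):=\det\bigl(t\Id-m(\lambda)\bigr).
\]
This identity reduces all three conclusions to statements about the characteristic polynomial $p$.

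To prove that $m(0)$ is a single Jordan block, let $k$ denote its number of Jordan blocks; since $m(0)$ is nilpotent, $\dim\ker\bigl(e^{2\pi m(0)}-\Id\bigr)=\dim\ker\,m(0)=k$ (because $e^{X}-\Id=X(\Id+X/2!+\cdots)$ has the same kernel as $X$ when $X$ is nilpotent). Choosing a basis $v_{1},\ldots,v_{k}$ of this kernel, the first $k$ columns of $e^{2\pi m(\lambda)}-\Id$ in this basis are $O(\lambda)$, so $d(\lambda,0)$ vanishes to order at least $k$ at $\lambda=0$. The hypothesis $\partial_{\lambda}d(0,0)\neq 0$ therefore forces $k=1$.

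With $m(0)$ a single nilpotent Jordan shift, $p(t,0)=t^{r}$ and Jacobi's formula gives $p(t,\lambda)=t^{r}+\lambda\,q(t)+O(\lambda^{2})$ with constant term $q(0)=-\tr\bigl(\operatorname{adj}(m(0))\,m_{1}\bigr)$. A direct calculation shows the adjugate of a single nilpotent Jordan shift of size $r$ has its sole nonzero entry at position $(1,r)$, equal to $(-1)^{r+1}$; hence $q(0)=(-1)^{r}(m_{1})_{r,1}$. Combining with the leading-order expansion above produces $\partial_{\lambda}d(0,0)=-(-1)^{r}(2\pi)^{r}(m_{1})_{r,1}$, so the $(r,1)$ entry of $m_{1}$ is nonvanishing and equals $-\partial_{\lambda}d(0,0)$ up to an explicit nonzero normalization absorbed into the name $\alpha$.

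For the final assertion, apply the Newton polygon to $d(\lambda(\xi),\xi)=0$. Writing $p(t,\lambda)=t^{r}+\sum_{j=0}^{r-1}a_{j}(\lambda)t^{j}$ with $a_{j}(0)=0$ and $a_{0}(\lambda)=-\alpha\lambda+O(\lambda^{2})$ (under the same normalization), the scaling ansatz $\lambda\sim\xi^{r}$ renders every intermediate term $a_{j}(\lambda)(i\xi)^{j}$ for $1\leq j\leq r-1$ of order $\xi^{r+j}\ll\xi^{r}$, while the error $O((|\lambda|+|\xi|)^{r+1})$ contributes at order $\xi^{r+1}$. The dominant balance reduces to $(i\xi)^{r}-\alpha\lambda+O(\xi^{r+1})=0$, yielding $\lambda(\xi)=(i\xi)^{r}/\alpha+O(\xi^{r+1})$, whence \eqref{keyrel} upon reading off Taylor coefficients; the implicit function theorem applied to $d$ lifts $m\in C^{s}$ to $\lambda\in C^{s}$, justifying this reading for $j\leq s$. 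The main technical obstacle is the uniform bookkeeping of the higher-order corrections, but each carries an extra factor of $\lambda$ or $\xi$ beyond the leading balance, so none competes with the $\xi^{r}$ term.
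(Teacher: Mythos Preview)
Your overall strategy matches the paper's---reduce to the characteristic polynomial $p(i\xi,\lambda)=\det(i\xi\,\Id-m(\lambda))$ and read off the leading balance---and your kernel-dimension argument for the single-block conclusion is actually cleaner than the paper's product-decomposition sketch. However, the opening matrix-level Taylor expansion is false for $r\geq 2$. At $(\lambda,\xi)=(0,0)$ the left side equals $e^{2\pi J}-\Id=\sum_{k=1}^{r-1}\frac{(2\pi)^{k}}{k!}J^{k}$, which for $r\geq 3$ has $O(1)$ entries beyond the first superdiagonal and is not $2\pi J$; and even for $r=2$, since $m(\lambda)^{2}=\lambda(Jm_{1}+m_{1}J)+O(\lambda^{2})$, the remainder $e^{2\pi m}-\Id-2\pi m$ is only $O(\lambda)$, not $O\bigl((|\lambda|+|\xi|)^{2}\bigr)$. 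So ``multilinearity of the determinant'' cannot produce the claimed error $O\bigl((|\lambda|+|\xi|)^{r+1}\bigr)$ from this.

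The determinant-level reduction you need is nonetheless true, but for a different reason. Factor through eigenvalues: with $\mu_{1}(\lambda),\ldots,\mu_{r}(\lambda)$ the eigenvalues of $m(\lambda)$,
\[
d(\lambda,\xi)=\prod_{j}\bigl(e^{2\pi\mu_{j}}-e^{2\pi i\xi}\bigr)
=(2\pi)^{r}e^{2\pi r i\xi}\prod_{j}(\mu_{j}-i\xi)\cdot\prod_{j}g\bigl(2\pi(\mu_{j}-i\xi)\bigr),
\qquad g(z)=\frac{e^{z}-1}{z},
\]
with $g$ entire and $g(0)=1$. The correction factor $e^{2\pi r i\xi}\prod_{j}g(2\pi(\mu_{j}-i\xi))$ is symmetric in the $\mu_{j}$, hence an analytic function of the coefficients of $p(\cdot,\lambda)$, so it is $C^{s}$ in $(\lambda,\xi)$ with value $1$ at the origin. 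Thus $d=(-1)^{r}(2\pi)^{r}p\cdot R$ with $R$ nonvanishing near $(0,0)$; this gives $\partial_{\lambda}d(0,0)=(-1)^{r}(2\pi)^{r}\partial_{\lambda}p(0,0)$ (using $p(0,0)=0$) and shows that the implicit curve $\lambda(\xi)$ is identical for $d=0$ and for $p=0$, after which your Jacobi-formula and Newton-polygon steps go through. (The paper's own proof takes the same questionable matrix-level shortcut. A minor sign as well: in your Jacobi step $q(0)=\partial_{\lambda}p(0,0)=(-1)^{r}\operatorname{tr}(\operatorname{adj}(J)m_{1})$, not $-\operatorname{tr}(\operatorname{adj}(J)m_{1})$, since $\operatorname{adj}(-J)=(-1)^{r-1}\operatorname{adj}(J)$.)
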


\begin{proof}
	In the case that $m(0)=J$ is a single $r\times r$ Jordan block, we find by Taylor expansion that
	$$
	e^{2\pi m(\lambda)}- e^{2\pi i\xi}= 
	2\pi \lambda( J + m_1) -  (e^{2\pi i\xi}-1) + O(\lambda^2),
	$$
	whence, by direct computation, $d(0,0)=0$ and $\D_\lambda d(0,0)= -(m_1)_{r1}$.
	In the general case, $d$ decomposes into the products of the Evans functions for the different Jordan blocks
	of $m(0)$, hence $\D_\lambda d(0,0)= 0$ if there were more than one block, and so, by contradiction,
	there is only one.  This establishes the first assertion.
	For $\lambda, \xi$ small, we may expand $(e^{2\pi i\xi}-1)$ as well, to obtain
	$$
	0=d(\lambda(\xi),\xi)/2\pi= \det \big(  \lambda (J+ m_1) -  (i\xi + O(|\xi|)^2)\Id + O(|\lambda|^2) \big)
	= -(m_1)_{r1}\lambda - (i\xi)^r + O(|\lambda|^2 + |\xi|^{r+1}), 
	$$
	from which we may obtain the second assertion by implicit differentiation.
\end{proof}

\begin{cor}\label{c0cor}
Under Assumption \eqref{ass3}, either (i) $\omega'(k)\neq 0$, $r=1$,
$\alpha=-1/\omega'(k)$ is real, $\lambda_*(\xi)= -i\xi \omega'(k)-b\xi^2 +O(\xi^3)$, and
$m(\lambda,k)= -\lambda/\omega'(k) + b\lambda^2/\omega'(k)^3 + O(\lambda^3)$ with $b$ positive real, or
(ii) $\omega'(k)= 0$, $r=2$, $\alpha$ is positive real, and $\lambda_*(\xi,k)=-\frac{1}{\alpha} \xi^2 + O(\xi^3)$.
\end{cor}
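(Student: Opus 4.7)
The plan is to apply Lemma~\ref{blocklem} to the neutral block $m(\lambda,k)$, use Assumption~\ref{ass3}(ii) together with a reality symmetry to constrain the exponent $r$ and coefficient $\alpha$, and then identify the leading-order expansion of $\lambda_*(\xi)$ with the group velocity $-\omega'(k)$ via adjoint perturbation of $\cL_\xi$ at $\xi=0$.

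First, the hypothesis $\D_\lambda d(0,0)\neq 0$ of Lemma~\ref{blocklem} must be verified. After the block decomposition of $\cM_1$, the 1-D Evans function \eqref{1DEvans} factors as $D(\lambda,\xi)=d_+(\lambda,\xi)\,d_-(\lambda,\xi)\,d(\lambda,\xi)$, with $d_\pm$ analytic and nonvanishing near $(0,0)$ since the spectra of $P_\pm$ are uniformly bounded away from $0$ (so $e^{2\pi P_\pm}$ is bounded away from the unit circle). Hence $\D_\lambda d(0,0)\neq 0$ is equivalent to $\D_\lambda D(0,0)\neq 0$, i.e.~to the simplicity of the eigenvalue $\lambda=0$ of $\cL_0$ granted by Assumption~\ref{ass3}(i). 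Lemma~\ref{blocklem} then yields
\[
\lambda_*(\xi)=\frac{(i\xi)^r}{\alpha}+O(|\xi|^{r+1}),\qquad \alpha\neq 0,
\]
and the quadratic decay $\Re\lambda_*(\xi)\leq -c|\xi|^2$ from Assumption~\ref{ass3}(ii) immediately rules out $r\geq 3$, leaving $r\in\{1,2\}$.

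A reality symmetry pins down $\alpha$: since $L_{\xi,0}$ has real coefficients apart from the $i\xi$ factors, $\overline{L_{\xi,0}}=L_{-\xi,0}$, so $\overline{\lambda_*(\xi)}=\lambda_*(-\xi)$; hence the Taylor coefficients of $\lambda_*$ are imaginary in odd orders and real in even orders. Thus $\alpha\in\R$ in both cases, and for $r=2$ the decay inequality $\Re(-\xi^2/\alpha)\leq -c\xi^2$ upgrades this to $\alpha>0$.

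The decisive step is the first-order adjoint perturbation: with $\D_\theta p$ the translational eigenfunction and $h$ the adjoint null-function from the proof of Theorem~\ref{main1},
\[
\lambda_*'(0)=\frac{\langle h,(\D_\xi L_{\xi,0}|_{\xi=0})\D_\theta p\rangle}{\langle h,\D_\theta p\rangle},\qquad \D_\xi L_{\xi,0}|_{\xi=0}=-i\omega(k)+2i|k|^2\D_\theta.
\]
Differentiating the profile equation \eqref{prof0} with respect to $k$ and pairing against $h$, using the range characterization \eqref{range}, reduces the numerator to $-i\omega'(k)\langle h,\D_\theta p\rangle$, giving the group-velocity identity $\lambda_*'(0)=-i\omega'(k)$. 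Thus $r=1$ precisely when $\omega'(k)\neq 0$: in case (i) matching $i/\alpha=-i\omega'(k)$ yields $\alpha=-1/\omega'(k)$, the quadratic coefficient $-b$ of $\lambda_*$ is real by the reality symmetry, and $\Re\lambda_*(\xi)\leq -c\xi^2$ forces $b\geq c>0$; inverting the scalar relation $m(\lambda_*(\xi),k)=i\xi$ in the expansion then produces $m(\lambda,k)=-\lambda/\omega'(k)+b\lambda^2/\omega'(k)^3+O(\lambda^3)$. Case (ii) is the remaining option $r=2$, in which $\omega'(k)=0$ and $\alpha>0$ have already been established. The main obstacle is the group-velocity identification itself, which hinges on the careful use of the range characterization \eqref{range} together with the $k$-derivative of \eqref{prof0} to evaluate $\langle h,|k|^2\D_\theta^2 p\rangle$; everything else is Taylor matching combined with the reality symmetry.
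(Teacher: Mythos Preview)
Your approach differs from the paper's: the paper simply cites the literature [Se, DSSS, JNRZ2, SSSU] for the Whitham expansion $\lambda_*(\xi) = -i\omega'(k)\xi - b(k)\xi^2 + \dots$ with $b$ real, then applies Lemma~\ref{blocklem} and, in case (i), inverts $\lambda = \lambda_*(\xi)$ to obtain the expansion of $m$. Your self-contained route---verifying $\D_\lambda d(0,0)\neq 0$ by factoring the Evans function through the block decomposition, excluding $r\geq 3$ via the quadratic decay in Assumption~\ref{ass3}(ii), and establishing reality of $\alpha$ through the conjugation symmetry $\overline{L_{\xi,0}} = L_{-\xi,0}$---is correct and supplies details the paper omits.

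The one step needing more care is the perturbation computation of $\lambda_*'(0)$. With $\D_\xi L_{\xi,0}|_{\xi=0} = -i\omega + 2i|k|^2\D_\theta$ and the identity $2k_j\langle h,\D_\theta^2 p\rangle = (\D_{k_j}\omega)\langle h,\D_\theta p\rangle$ coming from the $k$-derivative of \eqref{prof0}, a direct evaluation in the 1-D reduction gives
\[
\lambda_*'(0)=\langle h,(-i\omega+2i|k|^2\D_\theta)\D_\theta p\rangle
=-i\omega+ik\omega'=i\big(k\omega'-\omega\big),
\]
not $-i\omega'(k)$ as you assert; the $-i\omega$ contribution does not cancel. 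The paper avoids this computation by appealing to established formulas whose conventions for the phase variable and Floquet parameter differ from those in \eqref{B-F}; note that the eikonal equation \eqref{modcon} yields $k_t-\omega'(k)k_x=0$, opposite in sign to the Whitham equation \eqref{wexp} quoted in the proof, so the precise coefficient is convention-laden even within the paper. Your qualitative conclusion---$r=1$ exactly when the linear coefficient of $\lambda_*$ is nonzero, $r=2$ otherwise, with the reality and sign constraints on $\alpha$ and $b$ following from symmetry and Assumption~\ref{ass3}(ii)---survives intact once that coefficient is correctly identified.
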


\begin{proof}
Recall \cite{Se,DSSS,JNRZ2,SSSU} that the neutral spectral curve $\lambda_*$ has Taylor expansion 
$$
\lambda_*(\xi)= -i\omega'(k)\xi - b(k)\xi^2 + \dots
$$
about $\xi=0$, $b$ real, corresponding to the second order formal Whitham expansion \cite{W}
\be\label{wexp}
k_t + \D_x \omega(k)=  \D_x(b(k) \D_x k).
\ee
The result then follows by Lemma \ref{blocklem}, together with the observation, above, 
that $\lambda_*$ is determined by the reduced Evans function $d$ associated with the neutral Jordan block
	of $\cM_1$ at $\lambda=0$, followed in case (i) by inversion of relation 
	$\lambda =\lambda_*(\xi,k)$
	to get the Taylor expansion of $i\xi= m(\lambda,k)$.
\end{proof}

\br\label{ozrmk}
Note that the neutral eigenvalue $\mu(0)=0$ of $\cM_1(0)$ may have higher multiplicity
even though $0$ is a simple root of $\cL_0$; as a consequence, $\mu(\lambda)$ may be singular at $\lambda=0$
even though $\cM_1$ is analytic.  In particular (cf. \cite{OZ1}), when $\omega'(k)=0$, $\mu$ has 
a {square-root singularity}, $\mu(\lambda)\approx c\sqrt{\lambda}$.
We note further that the apparently degenerate case $\omega'(k)\equiv 0$ is in fact quite common,
occurring generically for stationary solutions $\omega(k)=0$, by reflection-invariance $x\to -x$ of \eqref{eq1}
\cite{DSSS}.
\er

We are now ready to construct symmetrizers for the neutral block $m(\lambda,k)$. In case (i), $\omega'(k)\neq 0$,
$m(\lambda,k)= -\lambda/\omega'(k) + b\lambda^2/\omega'(k)^3 + O(\lambda^3)$ is scalar by Corollary \ref{c0cor},
with $b$ positive real, 
hence $ s(\lambda,k)= -\omega'(k) $ is a symmetrizer, smooth in $(\lambda,k)$, with 
$$
\Re(sm)(\lambda,k)= \Re (\lambda + b\lambda^2/\omega'(k)^2)= \gamma + (b/\omega'(k)^2)\Re(\lambda^2) + 
O(\lambda^3)\gtrsim \gamma + |\tau|^2.
$$
In case (ii), $\omega'(k)=0$ at $k=k_*$,
$$
m(\lambda,k)=
\begin{pmatrix} 0 & 1\\ 0 & 0 \end{pmatrix}
	+ \begin{pmatrix}  a(\kappa,\lambda)\lambda  & b(\kappa,\lambda)\lambda\\ 
	c_0 \lambda + c(\kappa,\lambda)\lambda^2 + e_0(\kappa) \kappa \lambda  & d(\kappa,\lambda)\lambda \end{pmatrix}
+ \kappa n(\kappa),
$$
by Corollary \ref{c0cor}, $ \kappa:=k-k_*$, $c_0$ constant
real and positive, without loss of generality (rescaling $\lambda$) $c_0=1$.
This can be recognized as a variant of the case of a {\it glancing mode} of order $2$ arising in the theory of hyperbolic
boundary-value problems, for which we may use a construction like that of Kreiss \cite{K} in the hyperbolic
boundary-value setting to obtain a smooth symmetrizer $s(\lambda,k)$. 
We carry out this more complicated construction separately, in Appendix \ref{s:kcon}.

In either case, the constructed neutral-block symmetrizer $s$ yields a full block-diagonal local symmetrizer
for $\cM_1$ that is smooth in $(k,\lambda)$, yielding the desired estimate \eqref{Vest}.
This completes the proof of Proposition \ref{Vprop} for low and medium frequencies.

\subsection{High frequencies}\label{s:hf} 
For $\zeta:=|\gamma + i\D_t|+ \eps |\D_y|^2$ sufficiently large, we may proceed by the argument already
established in \cite{MZ}.
Namely, we may construct a symmetrizer for the principal-part 
symbol of the original, periodic in $z$ system, by a simplified version of \cite[Lemma 2.14]{MZ}
(establishing property (ii) of the reference and ignoring properties (i) and (iii)),
applying a periodic block-diagonalizing transformation at each point $z$ and noticing that commutator errors absorb,
reducing $M$ to form $\begin{pmatrix} P_+& 0\\0 & P_-\end{pmatrix}$, where $P_+, -P-\gtrsim \gamma + \eps |\eta|^2$.
By Lyapunov's Lemma, there exist $S_+$, $S_-$ symmetric such that $\Re (SP) \gtrsim \gamma + \eps|\eta|^2$, hence 
$$
\cS:= \begin{pmatrix} S_+& 0\\0 & S_-\end{pmatrix}
$$
serves as a symmetrizer giving $\Re (\cS M)\gtrsim \sqrt{\gamma + \eps |\eta|^2}$.
The result for the full system is then obtained, similarly as in the proof of Corollary \ref{lfcor}
by pseudodifferential estimates showing that commutators and other errors absorb,
from which \eqref{Vest} immediately follows.
The latter computations are carried out using the semiclassical parabolic paradifferential calculus 
described in \cite[\S 32]{MZ}.  For details, see \cite{MZ}.
This completes the proof of Proposition \ref{Vprop}.

\br\label{scalermk}
Note, as in \cite{MZ}, the essentially different scalings in bounded- vs. high-frequency regimes, 
as evidenced by the different bounds $\Re (\cS M)\gtrsim \gamma + \eps|\tau|^2 + \eps |\eta|^2$ vs.  
$ \Re (\cS M)\gtrsim \sqrt{\gamma + \eps |\eta|^2}. $\footnote{
The latter may be sharpened slightly to $\Re (\cS M)\gtrsim \sqrt{\gamma + |\tau|+ \eps |\eta|^2}$, though
we do not show it here.}
\er

\medskip

From Proposition \ref{Vprop}, we readily obtain our final linear bounds.

\begin{proof}[Proof of Theorem \ref{main2}]
	From \eqref{Vest} of Proposition \ref{Vprop} and the definition of $V$, $E$ in \eqref{system}, 
	we obtain for solutions $v$, $h$ of \eqref{linprob}
	supported in a sufficiently small neighborhood of $(\underline x, \underline t)$
	and $\gamma>0$ sufficiently large the estimate 
	$ \gamma \|e^{-\gamma t} v \|_{\cH^{1}_\eps} \lesssim  \|e^{-\gamma t} h \|_{L^2_\eps}.  $
	This may be extended to general $v$, $h$ by a partition of unity argument as in the
	proof of Proposition~5.1 in \cite[\S 5.2]{MZ}, as we now describe, 

	Namely, we first observe, by the property 
	(\eqref{zerolim}) that $\D_x^2 \psi \to 0$ as $|x|\to \infty$
	together with 
	$$
	k:=\D_x \Psi =\D_x \psi + O(\eps), 
	$$
	that for $\eps$ sufficiently small and $R$ sufficiently large,
	we may obtain the same estimates for $v$, $h$ supported on any neighborhood
	lying outside $C_R:=\{(t,x): \, |x|\leq R\}$ and of diameter $\leq 1/R$.
	For, the size of allowable neighborhoods is determined by required smallness of $o(\eps|\eta|^2)$ terms
	relative to $\eps |\eta|^2$, coming from change of coordinates of the Laplacian diffusion terms to
	the $(z,y)$ frame, $z=\Psi$, specifically, error terms arising from nonconstancy of $k=\nabla_x \Psi$
	that are controlled by 
	$$
	\|\D_x^2\Psi\|_{L^\infty}\sim \|\D_x^2\psi\|_{L^\infty} + \eps,
	$$
	together with existence of a smoothly varying frame $(z,y)$,
	which holds so long as variation of $k=\nabla_x \Psi$ is small, in particular for diameter times
	$\|\D_x^2 \Psi\|_{L^\infty}$ small.

	We may thus cover $C_R^\complement$ by a countable collection of identical translates 
	$\Omega_j$ on which the estimates are satisfied, for which
	each point $(t,x)\in C_R^\complement$ lies in at most a fixed finite number $N$ of the $\Omega_j$.
	(For example, we may achieve $N=2^d$ by tiling $C_R^\complement$ with identical rectangular tiles
	$R_j$, then taking $\Omega_j:= R_j \oplus B(0,\delta)$ for $\delta>0$ sufficiently small.) 
	Covering the compact set $C_R$ with finitely many more $\Omega_j$, we obtain a countable cover
	$\{\Omega_j\}$ of $\R^d\times [0,T]$ for which all but finitely many are identical translates, 
	each point $(t,x) \in \R^d\times [0,T]$ lies in at most $N_1$ of the $\Omega_j$,
	and the estimates are satisfied for $v$, $h$ supported in $\Omega_j$.

	Defining a partition of unity $\sum_j \chi_j$ subordinate to $\{\Omega_j\}$, we have that the estimate
	holds for each $v_j:=\chi_j v$.  Moreover, by construction $\sup |\D \chi|, \, \sup |\D^2\chi|\leq C$
	for some fixed $C>0$.
	Computing 
	$$
	\begin{aligned}
		\eps h_j:&= \eps \cP_{u^{\eps,m}}v_j = \eps \big( \chi_j h + 
	O(|\D \chi_j|) (| v|+ |\eps \D_x v|)+O(|\D^2_x \chi_j|(\eps)|v| \big)\\
		& =
	\eps \big( \chi_j h + O(| v|+ \eps |\D_x v|) \big) 
	\end{aligned}
	$$
	and using the fact that each $(t,x)$ lies in at most $N_1$ of the $\Omega_j$, 
	we find that the sum over $j$ of the $L^2$ norm of commutator terms
	$O(|\D \chi_j|) (|v|+ |\eps \D_x v|)+O(|\D^2_x \chi_j|(\eps)|v|$
	is $\lesssim  N_1 \|v\|_{\cH^1_\eps}\lesssim   \|v\|_{\cH^1_\eps}$, 
		hence absorbs in the left-hand side of the error estimate,
	giving the result for $s=0$.  See the discussion of \cite[p. 57]{MZ} for a similar argument in 
	the hyperbolic-parabolic boundary-layer case.

	Derivative estimates $s+1\geq 1$ then follow by a standard induction, 
	differentiating the equation and absorbing lower-order
	commutator terms using the estimates obtained previously in $\cH^s_\eps$, to yield
	$\gamma \|e^{-\gamma t} v \|_{\cH^{s+1}_\eps} \lesssim  \|e^{-\gamma t} h \|_{H^s_\eps}$
	for $s$ in the range specified for Theorem \ref{main2}.
	The desired estimate \eqref{linbd} then follows by the observation that
	$e^{-\gamma t}\sim 1$ for $t$ on the bounded domain $[0,T]$.
\end{proof}



\section{Nonlinear convergence}\label{s:nonlin}
With linear estimates in hand, nonlinear validity now follows by a standard contraction mapping argument
together with some care in dealing with initial-/boundary-layers in time variable $t$.

\subsection{Prepared data}\label{s:prepared}
For simplicity of exposition, and because the argument seems of interest in its own right, we
first treat the easier case of ``prepared data,'' seeking an exact solution {\it near} the approximate
solution $u^{\eps,m}$, but not necessarily agreeing at initial time $t=0$.

\begin{proposition}\label{prepprop}
Under the assumptions of Corollary \ref{main3}, for $\eps>0$ sufficiently small, there exists an exact solution 
$u\in \cH^{s+1}_\eps$ of \eqref{eq1} on $[0,T]$, satisfying	
\be\label{rembd2}
\|u-u^{\eps,m}\|_{\cH^{s+1}_\eps} + \eps^{1/2} \|u-u^{\eps,m}\|_{C^{s -[d/2]}_\eps} 
\lesssim \eps^{m}\|\D_{t,x}^2\psi\|_{H^{s+ 2(m+1)}}.
\ee
\end{proposition}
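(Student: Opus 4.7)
The plan is to construct the exact solution $u = u^{\eps,m} + v$ by a standard contraction mapping argument built on the linear estimate of Theorem~\ref{main2}. Rewriting the equation \eqref{eq2intro} for $v$ as
\[
\cP_{u^{\eps,m}} v \;=\; \eps^{-1}\bigl(-R^{\eps,m} + Q(u^{\eps,m},v)\bigr),
\]
I define a map $\Phi$ sending $v \in \cH^{s+1}_\eps$ (vanishing for $t<0$) to the unique solution $w \in \cH^{s+1}_\eps$ (also vanishing for $t<0$) of $\cP_{u^{\eps,m}} w = \eps^{-1}(-R^{\eps,m} + Q(u^{\eps,m},v))$ provided by Theorem~\ref{main2}. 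A fixed point $v$ of $\Phi$ automatically satisfies $v|_{t=0}=0$, so $u := u^{\eps,m} + v$ is an exact solution of \eqref{eq1} nearby $u^{\eps,m}$ as required.

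The estimates needed to make $\Phi$ a contraction on the ball $\mathcal{B} := \{v \in \cH^{s+1}_\eps : v|_{t<0}=0,\ \|v\|_{\cH^{s+1}_\eps} \leq C_0\eps^{m}\}$ come from three inputs. First, Theorem~\ref{main2} gives $\|\Phi(v)\|_{\cH^{s+1}_\eps} \lesssim \eps^{-1}(\|R^{\eps,m}\|_{\cH^s_\eps} + \|Q(u^{\eps,m},v)\|_{\cH^s_\eps})$. Second, Theorem~\ref{main1} yields $\|R^{\eps,m}\|_{\cH^s_\eps} \lesssim \eps^{m+1}$, so the residual contribution to $\|\Phi(v)\|_{\cH^{s+1}_\eps}$ is $\lesssim \eps^{m}$. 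Third, a tame Moser-type composition estimate, combined with smoothness of $f$ and boundedness of $u^{\eps,m}$ in $\cH^s_\eps$ coming from \eqref{asHbds}, gives
\[
\|Q(u^{\eps,m},v)\|_{\cH^s_\eps} \;\lesssim\; \|v\|^2_{L^\infty} + \|v\|_{L^\infty}\|v\|_{\cH^s_\eps}.
\]
Combining with the Sobolev embedding \eqref{sob}, $\|v\|_{L^\infty} \lesssim \eps^{-1/2}\|v\|_{\cH^s_\eps}$, I find for $v \in \mathcal{B}$ that $\|v\|_{L^\infty} \lesssim \eps^{m-1/2}$, and hence $\eps^{-1}\|Q\|_{\cH^s_\eps} \lesssim \eps^{2m-2} + \eps^{2m-3/2}$, which is $o(\eps^m)$ as soon as $m \geq 2$. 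Thus $\Phi(\mathcal{B}) \subset \mathcal{B}$ for $C_0$ large and $\eps$ small. Essentially the same estimate applied to $Q(u^{\eps,m},v_1) - Q(u^{\eps,m},v_2)$, bilinear in $(v_1+v_2)$ and $(v_1-v_2)$, yields
\[
\|\Phi(v_1) - \Phi(v_2)\|_{\cH^{s+1}_\eps} \;\lesssim\; \eps^{m-3/2}\|v_1-v_2\|_{\cH^{s+1}_\eps},
\]
which is a strict contraction for $\eps$ small. Banach's theorem produces a unique fixed point $v \in \mathcal{B}$, and the $C^{s-[d/2]}_\eps$ part of \eqref{rembd2} follows from \eqref{sob} (and its higher-derivative analogues) applied to $v$.

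The main technical obstacle is establishing the tame Moser composition estimate in the $\eps$-weighted spaces $\cH^s_\eps$ uniformly in $\eps$. The subtlety is that $u^{\eps,m}$ oscillates on scale $\eps$, so its ordinary derivatives blow up like $\eps^{-|\alpha|}$; without care, composing with $f$ could destroy the estimates. The resolution is that the definition of $\cH^s_\eps$ is tailored precisely to this scaling: the rescaling $(t,x) \mapsto (t/\eps, x/\eps)$ converts $\eps^{-1/2}\|\cdot\|_{\cH^s_\eps}$ into the standard $H^s$ norm, in which the rescaled profile $u^{\eps,m}(\eps t/\eps, \eps x/\eps)$ is uniformly smooth, so classical Moser estimates apply after rescaling and transfer back. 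Once this uniform composition bound is in hand, the $\eps$-bookkeeping above is routine.
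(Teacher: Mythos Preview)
Your contraction argument has the right structure and the right $\eps$-bookkeeping (Lipschitz constant $\lesssim \eps^{m-3/2}$, ball of radius $\lesssim \eps^m$), but there is a genuine gap in how you invoke Theorem~\ref{main2}. That theorem requires the forcing $h$ to vanish for $t<0$; in your iteration the forcing is $\eps^{-1}(-R^{\eps,m}+Q(u^{\eps,m},v))$, and while $Q$ vanishes for $t<0$ because $v$ does, the residual $R^{\eps,m}$ does \emph{not}. It is not even defined for $t<0$ as stated, and extending it by zero introduces a jump at $t=0$ whose $\eps\partial_t$ derivative is a Dirac mass, destroying the $\cH^s_\eps$ bound for $s\geq 1$. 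So $\Phi$ is not well-defined as written.

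The paper's fix is precisely this missing step: extend $\psi$ (hence $u^{\eps,m}$ and $R^{\eps,m}$) backward in time to $[-\delta,T]$ using reversibility of the eikonal equation $\partial_t\psi=\omega(\nabla_x\psi)$, then multiply the forcing by a smooth cutoff $\chi^\eps(t)=\chi(-t/\eps)$ that is $1$ for $t\geq 0$ and vanishes for $t\leq -\eps$. Because the cutoff varies on the fast scale $\eps$, its $\eps\partial_t$-derivatives are bounded, so the modified forcing retains the $O(\eps^{m+1})$ bound in $\cH^s_\eps$ and now vanishes for $t\leq -\eps$, allowing Theorem~\ref{main2} to be applied on the enlarged interval. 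The resulting solution $v$ vanishes for $t\leq -\eps$ but generically not at $t=0$; restricting to $[0,T]$ gives an exact solution near $u^{\eps,m}$ with \emph{some} (unprescribed) initial data. This is exactly why Proposition~\ref{prepprop} is a ``prepared data'' result, and why the prescribed-data Corollary~\ref{main3} requires the separate initial-layer argument of Section~\ref{s:prescribed}. Your claim that the fixed point automatically has $v|_{t=0}=0$ would, if it worked, bypass that entire section---but it rests on the invalid application of Theorem~\ref{main2}.
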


\begin{proof} By the assumptions on $\psi$, and standard smooth hyperbolic theory, we may extend $\psi$
	to a slightly enlarged time interval $[-\delta, T]$ on which it satisfies (a multiple of)
	the same bounds.
	Thus, for $0<\eps<\delta$, we may extend the residual equation \eqref{eq2intro}
	from $t\in [0,T]$ to $t\in [-\delta, T]$, via
	 \be\label{modeq2} 
	 \eps \cP_{u^{\eps,m}}v^{\eps,m} =  \eps \chi^\eps (t) e^{\eps,m} ,
	 \ee
	 where $\chi^\eps(t):= \chi(-t/\eps)$, with $\chi(z)$ a smooth cutoff function
	 vanishing for $z\geq 1$ and $1$ at to $z=0$. 
	 Evidently, \eqref{modeq2} agrees with \eqref{eq2intro} on the original time domain $[0,T]$,
	 and $\chi^\eps(t)e^{\eps,m}$ vanishes for $t\leq -\eps$;
	 moreover, the extension and cutoff functions do not change the bounds on the remainder, nor on
	 the quadratic-order function $Q$.
	 Thus, it is sufficient to solve \eqref{modeq2} and then restrict to $[0,T]$.
	
	Defining now $\eps^m \bar v^{\eps,m}:= v^{\eps,m}$ and
	$\bar Q(u^{\eps,m}, \bar v) :=  \eps^{-2m} Q(u^{\eps,m}, \eps^m \bar v)$, 
	and inverting \eqref{eq2intro}, we may express 
$\bar v^{\eps,m}$ as a solution of the fixed-point problem
	\be\label{barmeq}
	\bar v=\cT \bar v:=  \cP_{u^{\eps,m}}^{-1} \big( -  \eps^{-(m+1)}R^{\eps,m}  
	+ \eps^{m-1} \bar Q(u^{\eps,m}, \bar v) \big)
	\ee
	on $t\in [-\delta, T]$. (Here, we are using Sobolev embedding to see that 
	$\|\bar v\|_{L^\infty}\lesssim \eps^{-1/2}$, hence $\|\eps^m\bar v\|_{L^\infty}$ by $m\geq 2$
	is $O(\eps^{3/2})$, thus small.) 
	Recalling that $|\cP_{u^{\eps,m}}^{-1}|_{\cH^{s}_\eps}$ and  
	$\|\eps^{-(m+1)}R^{\eps,m}\|_{\cH^s_\eps}$ are uniformly bounded, and $\bar Q$ smooth and quadratic order
	in $\bar v$, so that the Lipschitz norm of $\bar Q$ with respect to $\bar v$ is
	is $\lesssim \|\bar v\|_{L^\infty}\lesssim \eps^{-1/2}$ 
	by Sobolev embedding/Moser's inequality, we find that $\cT$ is a contraction mapping with Lipschitz constant
	$\lesssim \eps^{m-3/2}\lesssim \eps^{1/2}=o(1)$ for $m\geq 2$ on a ball of radius 
	$\lesssim \|\eps^{-(m+1)}R^{\eps,m}\|_{\cH^s_\eps}$,  yielding
	a unique solution $ v^{m,\eps}\in \cH^s_\eps$ with $v^{m,\eps}=0$ for $t\leq -\eps$ and
	$
	\| v \|_{\cH^s_\eps} \lesssim \eps^m \|\eps^{-(m+1)}R^{\eps,m}\|_{\cH^s_\eps} 
	\lesssim \eps^{m}\|\D_{t,x}^2\psi\|_{H^{s+ 2(m+1)}},
	$
	by \eqref{asHbds}(ii). 
	Applying \eqref{linbd} to the original equation \eqref{eq2intro}, with $h:=e^{\eps,m}$,
	we obtain \eqref{rembd2}.
\end{proof}

\br\label{prepared}
Evidently, the solution $u$ obtained by this argument is not unique, nor does the argument show
that the exact ``prescribed data'' solution $u|_{t=0}=u^{\eps,m}|_{t=0}$ remains close, or even is
defined on an $O(1)$ interval of time,
its guaranteed time of existence $0\leq t\lesssim \eps$ being given by
well-posedness of the unscaled system $\D_t u+ f(u)= \Delta_x u$. 
Note that in the extension of \eqref{eq2intro} to $[-\delta, T+\delta]$ we made use of reversibility 
of the hyperbolic equation for $\psi$.
For an irreversible, e.g., diffusive modulation equation, one could solve forward to extend from
$[0,T]$ to $[0, T+ 2\eps]$, then use cutoffs to obtain a nearby solution for $t\in [\eps, T+\eps]$.
Alternatively, one might restrict to {\it analytic data} for which the diffusive equation may be solved in reverse time,
giving a result on $[0,T]$.
\er

\subsection{Prescribed data}\label{s:prescribed}
To treat the prescribed data problem $u|_{t=0}= u^{\eps,m}|_{t=0}$, or $v|_{t=0}=0$, 
we first examine the initial layer resulting from the mismatch between 
$u$ and $u^{\eps,m}$ equations, i.e., from forcing $R^{m,\eps}\not \equiv 0$ in $v$-equation
$\eps \cP_{u^{\eps,m}}v= \eps e^{\eps,m}$  of the introduction 
(\eqref{eq2intro}--\eqref{cPdef}):
\be\label{veq}
\eps  \D_t v   +  g^\eps v     -   \eps^2 \Delta_x v= -  R^{\eps,m}  + Q(u^{\eps,m}, v^{\eps,m}).
\ee

\bl\label{fastlem}
Under the assumptions of Corollary \ref{main3}, for 
$\|w_0\|_{\cH^s_\eps(\R^{d})}\lesssim \eps^{m+1}$, $m\geq 2$, and $\eps>0$ sufficiently small,
there exists a unique solution 
$w\in \cH^s_\eps(\R^d\times [0,\eps])$ of \eqref{veq} with data $w|_{t=0}=w_0$, satisfying
\be\label{wbd}
\|w\|_{\cH^{s+1}_\eps} \lesssim \|w_0\|_{\cH^s_\eps(\R^d)} + \|R^{\eps,m}\|_{\cH^s_\eps(\R^d\times [0,T])}. 
\ee
\el

\begin{proof}
	The rescaling $(t,x)\to (t/\eps, x/\eps)$ converts \eqref{veq} to
$$
\D_t v   +  g^\eps v     -   \Delta_x v= -  R^{\eps,m}  + Q(u^{\eps,m}, v^{\eps,m}),
$$
	$\cH^s_\eps$ to $\eps^{1/2}H^s$, and $[0,\eps]$ to $[0,1]$,
whereupon the result follows by well-posedness of linear diffusion equation
$\D_t v   +  g^\eps v     -   \Delta_x v= f$, smallness of initial data $w_0$, and standard Picard iteration.
\end{proof}

\begin{proof}[Proof of Corollary \ref{main3}]
	First note, as in the proof of Proposition \ref{prepprop},
	that $\psi$ and $u^{\eps,m}$ may be extended without loss of generality to the interval
	$[-\eps, T+\eps]$.
	Letting $\chi(z)$ be a smooth cutoff function vanishing for $z\geq 1$ and $1$ for $z\leq 0$
	and $\chi^\eps(t):=\chi(t/\eps)$, set
	$$
	\tilde v:=  (1-\chi^\eps(t)) v + \chi^{\eps} w , 
	$$
	where $w$ is the solution described in Lemma \ref{fastlem}, 
	with $w_0= w|_{t=0}\equiv 0$.

Substituting in \eqref{veq} and rearranging, we obtain
	\be\label{tildeeq}
	\eps \cP_{u^{\eps,m}}\tilde v= \eps \tilde e^{\eps,m}:= \tilde R^{\eps,m} + 
	\tilde Q(u^{\eps,m}, \tilde v),
	\ee
where
$
\tilde R^{\eps,m} = (1-\chi^\eps(t))R^{\eps,m} + O\big(\chi^{\eps})'(t)w\big) + O\big((1-\chi^\eps(t))w\big)
$
and
$ \tilde Q(u^{\eps,m}, \tilde v):=\chi^\eps(t-T) Q(u^{\eps,m}, \tilde v)$.  

	By \eqref{wbd} with $w_0=0$, we have $\|\tilde R^{\eps,m}\|_{\cH^s}\lesssim \|R^{\eps,m}\|_{\cH^s}$.
	Moreover, $(1-\chi^\eps)$ and $(\chi^\eps)'$, hence also $\tilde R^{\eps,m}$, all vanish
	for $t<0$.
	Thus, by Theorem \ref{main2}, we may express \eqref{tildeeq} as 
	$
	\tilde v= (\cP_{u^{\eps,m}})^{-1} \tilde e^{\eps,m} ,
	$
	or, rescaling as in \eqref{barmeq}, the fixed-point equation
	$$
	\bar{\tilde v}=\cT \bar{\tilde v}:=  \cP_{u^{\eps,m}}^{-1} \big( -  \eps^{-(m+1)}\tilde R^{\eps,m}  
	+ \eps^{m-1} \bar {\tilde Q}(u^{\eps,m}, \bar {\tilde v}) \big),
	$$
	for $\bar{\tilde v}:= \eps^{-m} {\tilde v}$ vanishing for $t\leq 0$.
	Arguing as in the proof of Proposition \ref{prepprop}, we find that $\cT$ is contractive on 
	a ball of radius $\lesssim \|\eps^{-(m+1)}R^{\eps,m}\|_{\cH^s_\eps}$,  
	with Lipschitz constant $\lesssim \eps^{m-3/2}\lesssim \eps^{1/2}=o(1)$,
	hence there exists a unique solution $ v^{m,\eps}\in \cH^s_\eps$ vanishing for $t\leq 0$ and
	satisfying \eqref{rembd}.
\end{proof}

\br\label{attractrmk}
Evidently, by \eqref{wbd}, we could choose any initial data $w_0$ satisfying
$$
\|w_0\|_{\cH^s_\eps(\R^d)} \lesssim \|R^{\eps,m}\|_{\cH^s_\eps(\R^d\times [0,T])} \lesssim 
\eps^{m+1}\|\D_{t,x}^2 \psi \|_{H^s}
$$
without affecting the argument above, hence the conclusion 
$$
\|u-u^{\eps,m}\|_{\cH^s_\eps}\lesssim \eps^m \|\D_{t,x}^2 \psi\|_{H^s}
$$
of Corollary \ref{main3} holds for any $u$ satisfying 
$
\|u|_{t=0}-u^{\eps,m}|_{t=0} \|_{\cH^s_\eps(\R^d)}\lesssim \eps^{m+1}\|\D_{t,x}^2 \psi \|_{H^s}.
$
\er


\appendix

\section{Kreiss-type construction for $2\times 2$ blocks}\label{s:kcon}
Here, under Assumption \ref{ass3}, we construct a smooth symmetrizer $s(\lambda)$
	 in the sense of Proposition \ref{symm} 
	 for a $2\times 2$ block
$$
m(\lambda,k)=
	J + \begin{pmatrix} a(\kappa,\lambda)\lambda  & b(\kappa,\lambda)\lambda\\ 
	\lambda + c(\kappa,\lambda)\lambda^2 + e_0(\kappa) \kappa \lambda  & d(\kappa,\lambda)\lambda \end{pmatrix}
+ \kappa n(\kappa),
 \qquad
 J:= \begin{pmatrix} 0 & 1\\ 0 & 0 \end{pmatrix},
$$
where $a= a_1+a_2i$, $b=b_1+b_2i$, $c=c_1+c_2i$, $d=d_1+d_2i$, $e_0=e_{0,1}+e_{0,2}i$ 
are smooth functions of $\lambda=\gamma+i\tau$ and $\kappa$, $n$ is a smooth function of $\kappa$,
and $\gamma, \tau, \kappa$ are real and sufficiently small.
That is, assuming that the small spectral curve
\be\label{stabass}
\lambda_*(\xi)=\theta(\kappa)i\xi -\eta(\kappa)\xi^2+ \dots
\ee
determined by $d(\lambda_*(\xi),\lambda,\kappa)\equiv 0$ where $d(\lambda,\kappa):=
\det(e^{m(\lambda,\kappa)X}-e^{i\xi X})$, or equivalently 
\be\label{equivstab}
\det \big( m(\lambda_*(\xi), \kappa)- i\xi \big)\equiv 0,
\ee
satisfies the conditions
\be\label{mstab}
\hbox{\rm $\theta$ real, $\eta$ real and positive}
\ee
inherited from Assumption \ref{ass3} and the structure of the original problem,
we seek
\be\label{sdef}
s(\lambda,\kappa)=
\begin{pmatrix} \alpha & 1+ i\sigma\\ 1-i\sigma & \beta \end{pmatrix}(\lambda,\kappa)
\ee
	 smooth with respect to $(\lambda,\kappa)$, with $\alpha, \beta, \sigma$ real such that for $\kappa$, $\lambda$ small enough, $\Re (sm)\gtrsim \gamma + |\tau|^2$.

\begin{lemma}\label{nlem}
Assuming \ref{mstab}, there exists a smooth change of coordinates $T(\kappa)=\Id + \kappa T_1(\kappa)$
	such that $J+\kappa n(\kappa)$ is transformed to a $(1+O(\kappa))$ multiple of 
$\begin{pmatrix} 0 & 1 \\ 0 & f \kappa\end{pmatrix}$, where $f=f(\kappa)$ is real.
\end{lemma}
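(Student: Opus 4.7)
The plan is to construct $T$ in two stages. In the first stage, I produce a smooth basis $\{v_0(\kappa), e_2\}$ in which $J + \kappa n(\kappa)$ becomes strictly upper-triangular, exploiting the persistent null eigenvalue coming from translational invariance of the underlying periodic-wave setup. In the second stage, I rescale the second basis vector by a unimodular smooth scalar so that the resulting conjugate factors as a common \emph{real} scalar multiplier times the target pattern.

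For stage one, note that since $\lambda=0$ lies in the spectrum of $\cL_0$ at every nearby $k=k_*+\kappa$, equation \eqref{equivstab} applied at $\xi=0$ forces $\det\bigl(J+\kappa n(\kappa)\bigr)\equiv 0$. Hence the kernel is one-dimensional for all small $\kappa$ (two-dimensionality would require $J+\kappa n\equiv 0$, incompatible with $J\neq 0$), and solving $(J+\kappa n)v_0=0$ directly shows it is spanned by the smooth vector
\[
v_0(\kappa)=\bigl(1,\;\zeta(\kappa)\bigr)^T,\qquad \zeta(\kappa):=\frac{-\kappa n_{11}(\kappa)}{1+\kappa n_{12}(\kappa)}=O(\kappa).
\]
Setting $T_0(\kappa):=[v_0(\kappa),e_2]$ gives $T_0=\Id+\kappa T_1^{(0)}(\kappa)$, and a short computation using $(J+\kappa n)v_0=0$ together with the trace identity yields
\[
T_0^{-1}(J+\kappa n(\kappa))T_0
=\begin{pmatrix} 0 & 1+\kappa n_{12}(\kappa) \\ 0 & \kappa\,\mathrm{tr}(n(\kappa))\end{pmatrix}.
\]

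For stage two, set $\tilde c(\kappa):=1+\kappa n_{12}(\kappa)$ and $\tau(\kappa):=\mathrm{tr}(n(\kappa))$. Since $\tilde c(0)=1$, the polar decomposition $\tilde c(\kappa)=r(\kappa)e^{i\phi(\kappa)}$ is well defined and smooth near $\kappa=0$, with $r(\kappa)=|\tilde c(\kappa)|>0$ and $\phi(\kappa)\in\R$, $r(0)=1$, $\phi(0)=0$. Replacing $e_2$ by $e^{i\phi(\kappa)}e_2$, the resulting $T(\kappa):=[v_0(\kappa),e^{i\phi(\kappa)}e_2]=\Id+\kappa T_1(\kappa)$ satisfies
\[
T^{-1}(J+\kappa n(\kappa))T=r(\kappa)\begin{pmatrix} 0 & 1 \\ 0 & f(\kappa)\kappa\end{pmatrix},\qquad f(\kappa):=\tau(\kappa)/r(\kappa),
\]
which is exactly the claimed form once $f$ is known to be real; since $r>0$ is real, this reduces to $\tau(\kappa)\in\R$.

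The main obstacle is precisely this reality of $\tau$. Leading-order reality follows quickly from \eqref{mstab}: one identifies the nonzero eigenvalue $\mu_2(\kappa)=\kappa\tau(\kappa)$ of $J+\kappa n(\kappa)$ with $i\xi_2(\kappa)$, where $\xi_2$ is the unique small nonzero root of $\lambda_*(\cdot,\kappa)\equiv 0$ provided by \eqref{equivstab}, and expands $\xi_2\sim i\theta(\kappa)/\eta(\kappa)$ using \eqref{stabass} to obtain $\mu_2\sim -\theta/\eta\in\R$. For the all-order statement, I would invoke the conjugation symmetry $\overline{\lambda_*(\xi,\kappa)}=\lambda_*(-\bar\xi,\kappa)$ inherited from the real parent reaction-diffusion system \eqref{eq1}; by uniqueness of the small nonzero root, this forces $-\bar\xi_2=\xi_2$, so $\xi_2$ is purely imaginary and $\mu_2\in\R$. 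Once this real structure is tracked through the Floquet reduction, the remaining algebraic manipulations in stages one and two are routine.
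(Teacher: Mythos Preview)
Your construction is correct (modulo a sign slip) and runs parallel to the paper's, with differences in execution worth noting.

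In stage two there is a sign error: with $T=[v_0,e^{i\phi}e_2]=T_0\,\mathrm{diag}(1,e^{i\phi})$ one computes the $(1,2)$ entry of $T^{-1}(J+\kappa n)T$ to be $\tilde c\,e^{i\phi}=re^{2i\phi}$, not $r$. You want $e^{-i\phi}e_2$ instead; this is cosmetic.

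The paper's route is slightly different in ordering: it first rescales by the scalar $(1+\kappa n_{12})^{-1}$ to normalize the $(1,2)$ entry, and only then conjugates by an explicit lower-triangular matrix $\begin{pmatrix}1&0\\ \tilde a\kappa&1\end{pmatrix}$, using $\det(J+\kappa n)=0$ to kill the $(2,1)$ entry. You instead triangularize first via the kernel vector $v_0$, which is essentially the same conjugation, and then peel off the scalar. Your polar-decomposition step (extracting $r=|\tilde c|$ rather than $\tilde c$ itself) has the mild advantage that the scalar prefactor is manifestly real, so $f=\tau/r\in\R$ follows immediately once $\tau\in\R$.

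For the reality of $\tau=\mathrm{tr}(n)$, the paper's argument is considerably shorter than yours: since the coefficient matrix $\cM$ at $\lambda=0$ is real-valued, the two small Floquet exponents (the eigenvalues of $J+\kappa n$) are real or a conjugate pair; one being $0$, the other---which your stage one already exhibits as $\kappa\,\mathrm{tr}(n)$---is therefore real. Your detour through the nonzero root $\xi_2$ of $\lambda_*(\cdot,\kappa)=0$ and the conjugation symmetry $\overline{\lambda_*(\xi)}=\lambda_*(-\bar\xi)$ is valid and reaches the same conclusion, but that symmetry is itself just the spectral manifestation of the real structure the paper invokes in one line.
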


\begin{proof} Write $J+ n(\kappa)$ as 
$\begin{pmatrix} \tilde a \kappa & 1 + \tilde  b \kappa \\ \tilde c\kappa & \tilde d \kappa\end{pmatrix}$.
Rescaling by factor $(1+\tilde b\kappa)^{-1}$, we may take without loss of generality $\tilde b\equiv 0$.
Setting $T=\begin{pmatrix} 1 & 0\\\tilde a  \kappa & 1\end{pmatrix}$, and computing, we obtain
	$$
	\begin{aligned}
	T (J+\kappa n(\kappa))T^{-1}&=
\begin{pmatrix} 1 & 0\\\tilde a \kappa & 1\end{pmatrix}
\begin{pmatrix} \tilde a \kappa & 1 \\ \tilde c\kappa & \tilde d \kappa\end{pmatrix}
\begin{pmatrix} 1 & 0\\ -\tilde a \kappa & 1\end{pmatrix}
	= \begin{pmatrix} 0 & 1 \\
	(\tilde c- \tilde a \tilde d)\kappa^2 &(\tilde a + \tilde d) \kappa\end{pmatrix}
	\end{aligned}
		$$
	Recalling that $\det(J+\kappa n)=0$, by $\lambda_*(0)=0$, we have that $\tilde c=\tilde a \tilde d$, whence,
	defining $f:\tilde a + \tilde d$, we obtain the asserted form.
	Noting that $\cM$ for $\lambda=0$ is real-valued, we find that its eigenvalues are real or else
	occur in conjugate pairs.  As there are only two small eigenvalues, and one is zero, the other must be real as 
	well.  But $f\kappa$ by inspection is the other eigenvalue, whence $f$ is real.
\end{proof}

By Lemma \ref{nlem}, we are reduced to finding a smooth symmetrizer \eqref{sdef} for
$$
m_1(\lambda,k)= TmT^{-1}=
\begin{pmatrix}  a(\kappa,\lambda)\lambda  & 1+  b(\kappa,\lambda)\lambda\\ 
\lambda + c(\kappa,\lambda)\lambda^2 + e_0(\kappa) \kappa \lambda  & 
d(\kappa,\lambda)\lambda + f(\kappa)\kappa \end{pmatrix}
$$
with 
$f(\kappa)$ real.
Setting $\tilde T=\begin{pmatrix} 1 & 0\\ a\lambda/(1+b \lambda) & 1\end{pmatrix}$, and 
	making the coordinate transformation 
	$$\tilde m:= \tilde T m \tilde T^{-1}$$
	similarly as in the proof of
	Lemma \ref{nlem}, we may simplify further to form
\be\label{mtilde}
\tilde m(\lambda,k)=
\begin{pmatrix}  0  & 1+  b(\kappa,\lambda)\lambda\\ 
	\lambda + c(\kappa,\lambda)\lambda^2 + e_0(\kappa) \kappa \lambda 
& d(\kappa,\lambda)\lambda + f(\kappa)\kappa \end{pmatrix}.
\ee

We will require the following key observation relating $e_0$, $f$ to expansion \eqref{stabass}.

\begin{lemma}\label{aereal}
Assuming \ref{mstab}, $e_0$ is real if $\theta \kappa \neq 0$ and $\kappa f=0$ if $\theta \kappa =0$.
	In either case, $e_{0,2}f\kappa=0$.
\end{lemma}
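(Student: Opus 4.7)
The plan is to extract a single scalar identity by plugging the spectral curve \eqref{stabass} into the characteristic equation \eqref{equivstab} at order $\xi$, then read off the three assertions from real and imaginary parts.

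First, starting from the explicit form \eqref{mtilde}, I compute
$$
\det\!\big(\tilde m(\lambda,\kappa) - i\xi\,\Id\big)
= -\xi^2 - i\xi(d\lambda + f\kappa) - (1+e_0\kappa)\lambda - (b+c)\lambda^2 + O(\lambda^3).
$$
Substituting the expansion $\lambda_*(\xi) = \theta(\kappa)\,i\xi + O(\xi^2)$ from \eqref{stabass}, the only contributions at order $\xi^1$ come from $-i\xi f\kappa$ and $-(1+e_0\kappa)\lambda$; every other term is $O(\xi^2)$ once $\lambda$ is replaced by $\theta\,i\xi + O(\xi^2)$. Because $\det(\tilde m(\lambda_*,\kappa) - i\xi)\equiv 0$ by \eqref{equivstab}, this coefficient must vanish identically in $\kappa$, which yields the key identity
$$
\theta(\kappa)\bigl(1 + e_0(\kappa)\,\kappa\bigr) + f(\kappa)\,\kappa = 0. \qquad (\ast)
$$

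Second, I split $(\ast)$ into real and imaginary parts. By the stability hypothesis \eqref{mstab}, $\theta$ is real; by Lemma \ref{nlem}, $f$ is real; and $\kappa$ is a real parameter. Writing $e_0 = e_{0,1}+ie_{0,2}$, the imaginary part of $(\ast)$ reduces to $\theta\,\kappa\,e_{0,2} = 0$, while the real part reads $\theta(1+e_{0,1}\kappa) + f\kappa = 0$. If $\theta\kappa\ne 0$, the imaginary part forces $e_{0,2}=0$, i.e.\ $e_0$ is real. If $\theta\kappa=0$, then either $\kappa=0$ (so $\kappa f = 0$ trivially) or $\theta=0$ with $\kappa\ne 0$, in which case the real part collapses to $f\kappa=0$. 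The combined assertion $e_{0,2}f\kappa=0$ is now automatic: in the first case $e_{0,2}=0$, in the second $\kappa f=0$.

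The main obstacle is essentially bookkeeping: one must verify that no additional terms contribute at order $\xi^1$ once $\lambda_*$ is substituted, so that $(\ast)$ really captures the full $\xi$-coefficient of the characteristic polynomial. After that, the argument is pure linear algebra, and the essential input from \eqref{mstab} is precisely that $\theta$ (and not merely $\eta$) is real, which is what forces $e_{0,2}$ to vanish in the generic case $\theta\kappa\ne 0$; the real-ness of $f$ supplied by Lemma \ref{nlem} plays the symmetric role when $\theta=0$.
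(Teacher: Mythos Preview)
Your proof is correct and follows essentially the same approach as the paper: both substitute \eqref{mtilde} into \eqref{equivstab}, extract the order-$\xi$ coefficient after inserting $\lambda_* = \theta i\xi + O(\xi^2)$, and read off the conclusions from the reality of $\theta$, $f$, $\kappa$. Your explicit splitting of the resulting scalar identity into real and imaginary parts is a slight elaboration of the paper's direct solve $e_0 = (f-\theta)/(\kappa\theta)$, but the underlying argument is identical.
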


\begin{proof}
Substituting \eqref{mtilde} into \eqref{equivstab} gives
$
	0= (-i\xi)(d\lambda_*-f\kappa)- \lambda_*(1+b\lambda_*)(1+c\lambda_* + e_0 \kappa).
$
Substituting \eqref{stabass} and solving to order $\xi$ for $\kappa$ fixed then gives
$  f \kappa - \theta \kappa (1+e_0\kappa)=0$, and thus
	for $\theta \kappa \neq 0$, $ e_0= (f-\theta)/\kappa \theta={\rm real} $
	by reality of $f$, $\theta$, $\kappa$, or $e_{0,2}=\Im e_0=0$. 
	Similarly, for $\theta \kappa =0$ we have $f\kappa=0$.
	Combining cases, we have $e_{0,2} f\kappa =  0$ in either scenario.
\end{proof}

We are now ready to prove existence of symmetrizers. Computing, we have for $|\lambda|, |\kappa|=o(1)$,
\ba\label{smcomp}
	\Re(sm)&=
\Re
\begin{pmatrix} 
	(1 + i\sigma)(\lambda + c\lambda^2+ e_0\kappa \lambda)
	& \alpha(1 + b\lambda) + (1+i\sigma)(d\lambda +f\kappa) \\
	\beta(\lambda +c \lambda^2 + e_0\kappa \lambda) & 
	(1 -i\sigma)(1+b \lambda)  + \beta d\lambda + \beta f\kappa
 \end{pmatrix}\\
	\quad & =
	\begin{pmatrix} Y 
		& X\\ \bar X &  1+o(1) + O(\sigma) + o(1)\beta
 \end{pmatrix},
 \ea
where, separating out $\gamma$ contributions, we have, for $\alpha, \beta, \sigma$ bounded, 
$$
\begin{aligned}
	X &= O(\gamma)+ \alpha(1+bi\tau)+ (1+i\sigma) (di \tau + f\kappa) 
	+ \beta(-i\tau -\bar c \tau^2 - \bar e_0 i\tau) \\
	Y&= 
	\gamma \Re \big( (1+i\sigma)(1+ c(\gamma + 2i\tau)+ e_0 \kappa)
	+ 
\tau \Re \big(  (1+i\sigma)(i-c\tau+ e_0\kappa i) \big)\\
	&= \gamma(1+O(|\tau,\kappa|(1+|\sigma|)) 
	+ \tau \big(  -c_1\tau - e_{0,2}\kappa  +\sigma(-1 +2c_2\tau - e_{0,1}\kappa ) 
	\big),\\
\end{aligned}
$$
giving, for $|\sigma|$ bounded and $\kappa, \tau$ small,
	\ba\label{XYvals}
	\Re X &= \alpha(1- b_2\tau) + \beta(e_{0,2}\kappa \tau- c_1 \tau^2)
	-\sigma d_1 \tau -d_2\tau +\kappa f + O(\gamma ) ,\\
	\Im X &= \alpha b_1 \tau  + \beta(- \tau + c_2\tau^2- e_{0,1}\kappa \tau) +
	\sigma(  - d_2\tau + f\kappa)
	+d_1\tau,\\
	Y&= \gamma(1+o(1)) + \sigma\tau(-1 + c_2\tau  - e_{0,1}\kappa)
	-e_{0,2}\kappa \tau -c_1 \tau^2. 
	\ea

Next, we seek $\alpha,\beta,\sigma$ such that non-$\gamma$ terms in \eqref{XYvals} are cancelled in
$\Re X$, $\Im X$, and give total contribution $|\tau|^2$ in $Y$, i.e.,
\ba\label{IFTsys}
\alpha &= \alpha b_2 \tau+ \beta( c_1 \tau^2 - e_{0,2}\kappa \tau) + \sigma d_1 \tau 
+d_2 \tau - \kappa f,\\
\beta &= \alpha b_1+ \beta( c_2\tau - e_{0,1}\kappa) + \sigma (f\kappa/\tau- d_2) + d_1, \\
\tau^2&= \sigma\tau(-1 + c_2\tau  - e_{0,1}\kappa) -e_{0,2}\kappa \tau -c_1 \tau^2,
\\
\ea
giving 
$
\Re (sm)= \begin{pmatrix} \gamma(1+o(1))+|\tau|^2 & O(\gamma)\\O(\gamma) &  1+o(1) 
 \end{pmatrix},
$
hence, by Sylvester's principal minor criterion, 
$$
\Re (sm)-(\gamma/2+|\tau|^2)\Id >0,
$$
and thus $\Re (sm)\gtrsim \gamma +|\tau|^2$ as desired, for $\gamma$, $\kappa $ sufficiently small.

It remains to solve the the linear system \eqref{IFTsys}.
Solving the third, decoupled equation as
\be\label{sigsol}
\sigma=(1-c_2\tau+e_{0,1}\kappa)^{-1}(-\tau-e_{0,2}\kappa -c_1 \tau) = O(|\tau, \kappa|),
\ee
and substituting into the second, we may rewrite the apparently singular term  $\sigma \kappa f/\tau$ as
$$
\begin{aligned}
	\sigma \kappa f/\tau& = 
(1-c_2\tau+e_{0,1}\kappa)^{-1}(-\tau -e_{0,2}\kappa -c_1 \tau)\kappa f/\tau\\
	&= -(1-c_2\tau+e_{0,1}\kappa)^{-1}(1+c_1) \kappa f= O(\kappa).
\end{aligned}
$$
Here, we have made crucial use of Lemma \ref{aereal} in observing the cancellation $e_{0,2}f\kappa=0$.
This reduces \eqref{IFTsys} to a linear system in $\alpha, \beta$ with bounded coefficients,
\ba\label{alphabeta}
\alpha &= \alpha b_2 \tau+ \beta( c_1 \tau^2 - e_{0,2}\kappa \tau) +d_2 \tau - \kappa f + O(|\tau, \kappa|)\tau,\\
\beta &= \alpha b_1+ \beta( c_2\tau - e_{0,1}\kappa) +  d_1 + O(\kappa), \\
\ea
or, for $\tau, \kappa$ sufficiently small,
\ba\label{matsys}
\begin{pmatrix} \alpha\\ \beta\end{pmatrix}&=
	\begin{pmatrix} 1+O(\tau)& O(|\tau,\kappa|)\\ -b_1 & 1 + O(|\tau,\kappa|) \end{pmatrix}^{-1}
		\begin{pmatrix} O(|\tau,\kappa|)\\ d_1 + O(|\tau,\kappa|)\end{pmatrix} \\
	&= \begin{pmatrix} 1& 0\\ b_1 & 1  \end{pmatrix}
		\begin{pmatrix} 0\\ d_1 \end{pmatrix} + O(|\tau,\kappa|) 
		= \begin{pmatrix} 0\\ d_1 \end{pmatrix} + O(|\tau,\kappa|), 
\ea
where $O(\tau)$, $O(\kappa)$, and $O(|\kappa,\tau|)$ terms are smooth functions of $(\kappa, \tau)$,
yielding
existence/uniqueness of smooth bounded solutions 
$ \alpha$, $\sigma = O(|\tau,k|)$ and $\beta =   d_1 + O(|\tau, \kappa|)$ of \eqref{IFTsys}.

This gives existence of smooth symmetrizers, completing the proof of Proposition~\ref{symm}.

\br\label{cormk}
A similar symmetrizer construction yielding the same bounds \eqref{symmeq} was carried out for $2\times 2$
``parabolic blocks'' in \cite[Lemma 5.2]{C} in the context of hyperbolic finite difference schemes.
An important new aspect complicating our analysis here is presence of the additional parameter $\kappa$.
\er

\section{Semiclassical paradifferential calculus}\label{s:paradiff_app}
For completeness, we include here essentially verbatim the useful r\'esum\'e of \cite[\S 4.2, p. 60-62]{GMWZ2},
describing the paradifferential calculi used here and in \cite{MZ,GMWZ1,GMWZ2}, etc.
For an extended version, including proofs, see \cite[\S3.1]{MZ}.  
These include both {\it homogeneous} and {\it parabolic} calculi, the former used for the bounded-frequency
and the latter for the large-frequency regime.
The reader may skip over the parts about parabolic calculus if desired, as this is used in the present paper only
implicitly, through reference to previous results of \cite{MZ}.

Each calculus has its own associated scaling.
With $\zeta=(\tau,\gamma,\eta)$ as before and
$\alpha=(\alpha_\tau,\alpha_\eta)\in\bN\times\bN^{d-1}$ a
multi-index, set $\bR^{d+1}_+=\{\zeta:\gamma\geq 0\}$ and
\begin{align}\label{j1}
\begin{split}
&\langle\zeta\rangle=(1+|\zeta|^2)^{1/2}\\
&\Lambda(\zeta)=(1+\tau^2+\gamma^2+|\eta|^4)^{1/4}\\
&|\alpha|=\alpha_\tau+|\alpha_\eta|,\;\;\|\alpha\|=2\alpha_\tau+|\alpha_\eta|.
\end{split}
\end{align}

\begin{defn}[Symbols]\label{j2}
\textbf{}

 1.   Let $\mu\in\bR$. The space of \emph{homogeneous}
symbols $\Gamma^\mu_0$ is the set of locally $L^\infty$ functions
$a(t,y,x,\zeta)$ on $\bR^{d+1}\times\bR^{d+1}_+$ which are
$C^\infty$ in $\zeta$ and satisfy:
\begin{align}\label{j3}
|\partial_{\tau,\eta}^\alpha a|\leq
C_\alpha\langle\zeta\rangle^{\mu-|\alpha|}, \text{ for all
}(t,y,x,\zeta) \text{ and }\alpha.
\end{align}

2.  For $k=0,1,2,\dots$, $\Gamma^\mu_k$ denotes the space of
symbols $a\in\Gamma^\mu_0$ such that $\partial^\alpha_{t,y}
a\in\Gamma^\mu_0$ for $|\alpha|\leq k$.

3.  The spaces of \emph{parabolic} symbols $P\Gamma^\mu_0$ and
$P\Gamma^\mu_k$ are defined in the same way, using
$\Lambda(\zeta)$ in place of $\langle\zeta\rangle$ and
$\|\alpha\|$ in place of $|\alpha|$.
\end{defn}

Observe that symbols in $\Gamma^\mu_k$ which
are independent of $x$ constitute a subspace of $\Gamma^\mu_k$,
and similarly for the spaces $P\Gamma^\mu_k$.

The spaces $\Gamma^\mu_k$ are equipped with the natural seminorms
\begin{align}\label{j4}
|a|_{\mu,k,N}:=\sup_{|\alpha|\leq N}\sup_{|\beta|\leq
k}\sup_{(t,y,x,\zeta)}\langle\zeta\rangle^{|\alpha|-\mu}|\partial_{t,y}^\beta\partial_\zeta^\alpha
a(t,y,x,\zeta)|.
\end{align}
Seminorms on the spaces $P\Gamma^\mu_k$ are defined in the
same way by the substitutions described earlier.

We consider a semiclassical quantization of symbols.  When
$a\in\Gamma^\mu_0$ is independent of $(t,y)$ the associated
homogeneous paradifferential operator acts in $(t,y)$ and is
defined by the Fourier multiplier $a(x,\epsilon\zeta)$:
\begin{align}\label{j5}
T^{\epsilon,\gamma}_a u(t,y,x)=\frac{1}{(2\pi)^d}\int
e^{it\tau+iy\eta} a(x,\epsilon\zeta)\hu(x,\tau,\eta)d\tau d\eta.
\end{align}
For $a\in P\Gamma^\mu_0$ the associated parabolic operator
$P^{\epsilon,\gamma}_a$ is defined by the same formula.  When the
symbols depend on $(t,y)$, the corresponding operators are defined
by formulas similar to \eqref{j5}, except that the symbols are
first smoothed in $(t,y)$ using an idea of Bony \cite{B}.  The
smoothing process in the homogeneous case differs from that in the
parabolic case (see \cite{MZ}, Proposition B.7). 
We shall often drop the superscripts $\epsilon,\gamma$ and write
the operator defined by \eqref{j5} as $T_a$.

\subsubsection{Sobolev spaces}  For $s\in\bR$ let $H^s$ denote the
space of functions $u(t,y)$ such that
\begin{align}\label{j7}
|u|_{s,\epsilon,\gamma}:=\left(\int_{\bR^d}\langle\epsilon\zeta\rangle^{2s}|\hu(\tau,\eta)|^2d\tau
d\eta\right)^{1/2}<\infty,
\end{align}
and let $\cH^s$ be the space of functions $u(t,y,x)$ such that
\begin{align}\label{j8}
\|u\|_{s,\epsilon,\gamma}=\left(\int
|u(\cdot,x)|^2_{s,\epsilon,\gamma} dx\right)^{1/2}<\infty.
\end{align}
Similarly define spaces $PH^s$ and $P\cH^s$ by substituting the
weight $\Lambda(\epsilon\zeta)$ for $\langle\epsilon\zeta\rangle$
in \eqref{j7} and \eqref{j8}. We use the same
notation $\|u\|_{s,\epsilon,\gamma}$ for norms in $\cH^s$ and
$P\cH^s$, depending on context for distinction.

\subsubsection{Action on Sobolev spaces, symbolic calculus}
\begin{prop}[Action]\label{j9}
For any $a\in\Gamma^\mu_0$ and $s\in\bR$ there is a $C$ such that
for $\epsilon\in (0,1]$, $\gamma\geq 1$ and $u\in\cH^s$:
\begin{align}\label{j10}
\|T_a^{\epsilon,\gamma}u\|_{s-\mu,\epsilon,\gamma}\leq
C\|u\|_{s,\epsilon,\gamma}.
\end{align}
The constant $C$ is bounded when $a$ remains in a bounded subset
of $\Gamma^\mu_0$.

For $a\in P\Gamma^\mu_0$ the operators $P^{\epsilon,\gamma}_a$
have the same mapping property on the spaces $P\cH^s$.
\end{prop}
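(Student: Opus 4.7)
The plan is a standard paradifferential argument: first reduce to the case where the symbol is independent of $(t,y)$, for which the operator is a Fourier multiplier and the estimate is immediate from Plancherel, and then handle general $(t,y)$-dependence via Bony's spectral localization together with an almost-orthogonality sum.

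For the $(t,y)$-independent case $a = a(x,\zeta)$, the operator \eqref{j5} is at each fixed $x$ simply the Fourier multiplier $a(x,\eps\,\cdot\,)$ acting in $(t,y)$. Applying Plancherel in $(\tau,\eta)$ and then integrating in $x$ gives
\begin{equation*}
\|T_a^{\epsilon,\gamma}u\|_{s-\mu,\epsilon,\gamma}^2
= \iint \langle\epsilon\zeta\rangle^{2(s-\mu)}\,|a(x,\epsilon\zeta)|^2\,|\hu(x,\tau,\eta)|^2\,d\tau\,d\eta\,dx,
\end{equation*}
and the pointwise symbol bound $|a(x,\epsilon\zeta)|\le |a|_{\mu,0,0}\,\langle\epsilon\zeta\rangle^{\mu}$ from Definition \ref{j2} immediately yields $\|T_a^{\epsilon,\gamma}u\|_{s-\mu,\epsilon,\gamma}\le |a|_{\mu,0,0}\,\|u\|_{s,\epsilon,\gamma}$, with dependence only on the zeroth seminorm of $a$.

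For general $a \in \Gamma^\mu_0$, which is merely $L^\infty$ in $(t,y)$, I would invoke Bony's smoothing procedure underlying the definition of $T^{\epsilon,\gamma}_a$: one convolves $a$ in $(t,y)$ at each $\zeta$ with a mollifier whose support shrinks like $\langle\epsilon\zeta\rangle^{-1}\epsilon$, producing a smoothed symbol $\tilde a$ that still belongs to $\Gamma^\mu_0$ (with equivalent seminorms) and has the crucial property that the $(t,y)$-frequency support of $\tilde a(\cdot,\cdot,x,\zeta)$ lies in a small ball of radius $\delta\,\langle\epsilon\zeta\rangle/\epsilon$. Using a Littlewood--Paley partition of unity in $(t,y)$ at dyadic scales $2^k/\epsilon$, one decomposes $\tilde a = \sum_k \tilde a_k$ and $u = \sum_k u_k$ accordingly; spectral localization then forces $T_{\tilde a_j} u_k$ to vanish unless $|j-k|$ is bounded, and on each dyadic shell the $(t,y)$-independent estimate applied to $\tilde a_k$ (with $x$-dependence handled exactly as above) gives the required bound. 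Summing via the Cotlar--Knapp--Stein/almost-orthogonality lemma produces \eqref{j10}, with the constant controlled by a seminorm $|a|_{\mu,0,N}$ for some $N = N(s,d)$. The parabolic case is structurally identical: one replaces the weight $\langle\epsilon\zeta\rangle$ by $\Lambda(\epsilon\zeta)$, the homogeneous order $|\alpha|$ by the parabolic order $\|\alpha\|$, and the isotropic Littlewood--Paley decomposition by the parabolically anisotropic one respecting the scaling $(\tau,\gamma,\eta)\mapsto (\lambda^2\tau,\lambda^2\gamma,\lambda\eta)$.

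The main technical obstacle is the verification that Bony's smoothing preserves the symbol class $\Gamma^\mu_0$ uniformly in $\epsilon$ and that the resulting packet operators satisfy the almost-orthogonality needed to sum the dyadic bounds; both facts are standard but require care because $a$ has no $(t,y)$-regularity to start with, so the smoothing both provides the derivatives required for the symbolic calculus and must be shown not to inflate the relevant seminorms. These verifications are carried out in detail in \cite[\S 3.1]{MZ}, and at the level of this expository appendix it suffices to invoke them.
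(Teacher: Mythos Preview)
The paper does not prove this proposition: Appendix \ref{s:paradiff_app} is explicitly labeled a r\'esum\'e of known tools, reproduced ``essentially verbatim'' from \cite[\S4.2]{GMWZ2}, with all proofs deferred to \cite[\S3.1]{MZ}. So there is no proof in the paper to compare against; your sketch actually supplies more than the paper does, and your closing sentence invoking \cite[\S3.1]{MZ} is precisely the paper's own disposition of the matter.

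Your outline is the standard one and is correct in spirit. One small inaccuracy worth noting: in Bony's construction the decomposition is not a double sum $\sum_{j,k} T_{\tilde a_j} u_k$ with near-diagonal support; rather the smoothing of $a$ is tied to the frequency of $u$, so that $T_a u = \sum_k (\sigma_k a)\,\Delta_k u$ with $\sigma_k$ a low-pass filter at scale comparable to $2^k/\eps$. The spectral localization then guarantees that each summand has $(t,y)$-Fourier support in an annulus of scale $2^k/\eps$, and it is this almost-orthogonality of the \emph{outputs} that allows the dyadic bounds to be summed via Littlewood--Paley. The distinction does not affect the conclusion but is worth getting right if you intend to write out the details rather than cite them.
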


\begin{prop}[Compositions]\label{j11}
Consider $a\in\Gamma^\mu_1$ and $b\in\Gamma^{\nu}_1$.  Then
$ab\in\Gamma^{\mu+\nu}_1$ and there is a $C$ such that for
$\epsilon\in (0,1]$, $\gamma\geq 1$ and $u\in\cH^s$:
\begin{align}\label{j12}
\|(T^{\epsilon,\gamma}_a\circ
T^{\epsilon,\gamma}_b-T^{\epsilon,\gamma}_{ab})u\|_{s-\mu-\nu+1,\epsilon,\gamma}\leq
C\epsilon\|u\|_{s,\epsilon,\gamma}.
\end{align}
The constant $C$ is bounded when $a$ and $b$ remain in bounded
subsets of $\Gamma^\mu_1$ and $\Gamma^\nu_1$ respectively.

Moreover, if $b$ is independent of $(t,y)$ then
$T^{\epsilon,\gamma}_a\circ
T^{\epsilon,\gamma}_b=T^{\epsilon,\gamma}_{ab}$.

The same inequality holds for compositions of operators
$P^{\epsilon,\gamma}_a$ and $P^{\epsilon,\gamma}_b$ acting on
$u\in P\cH^s$.
\end{prop}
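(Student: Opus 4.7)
The statement is a standard semiclassical symbolic composition law, and I would prove it by (i) representing $T^{\epsilon,\gamma}_a$ and $T^{\epsilon,\gamma}_b$ as genuine pseudodifferential operators with Bony para-smoothed symbols, (ii) applying the semiclassical composition formula to get the leading symbol $\sigma_a\sigma_b$ plus an $O(\epsilon)$ correction of one lower order, and (iii) comparing $\sigma_a\sigma_b$ to the para-smoothed product $\sigma_{ab}$, showing the discrepancy is itself an $O(\epsilon)$ operator of one lower order. The final estimate \eqref{j12} follows by invoking the action Proposition~\ref{j9} on the resulting error symbol.

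\textbf{Step 1 (smoothing).} Using Bony's para-smoothing (Proposition B.7 in \cite{MZ}), one writes $T^{\epsilon,\gamma}_a u = \mathrm{op}_\epsilon(\sigma_a)u$ with $\sigma_a(t,y,x,\zeta)$ obtained from $a$ by a convolution in $(t,y)$ at scale $\langle\epsilon\zeta\rangle^{-1}$; the same for $\sigma_b$. The key outputs of this step are: $\sigma_a\in\Gamma^\mu_1$ with seminorms controlled by those of $a$, the Fourier spectrum of $\sigma_a$ in $(t,y)$ is localized at scale $\lesssim\langle\epsilon\zeta\rangle$, and $\sigma_a\equiv a$ when $a$ is independent of $(t,y)$.

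\textbf{Step 2 (composition).} For operators with such spectrally localized symbols, the standard oscillatory-integral/Taylor argument in semiclassical calculus gives a composition symbol
\[
\sigma_a\#\sigma_b \;=\; \sigma_a\sigma_b \;+\; \epsilon\, r_1(t,y,x,\zeta),
\]
where the first-order correction $r_1$ is a finite sum of products $(\partial_\zeta^\alpha \sigma_a)(\partial_{t,y}^\alpha \sigma_b)$ with $|\alpha|=1$, and hence lies in $\Gamma^{\mu+\nu-1}_0$ with seminorms controlled by $|a|_{\mu,1,N}|b|_{\nu,1,N}$ for some fixed $N$. The extra factor $\epsilon$ arises because each $\partial_{t,y}$ applied to a symbol Fourier-localized at scale $\langle\epsilon\zeta\rangle$ produces the gain $\epsilon\langle\epsilon\zeta\rangle$, and one of the $\langle\epsilon\zeta\rangle$ factors is what drops the symbol by one order.

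\textbf{Step 3 (product vs.\ smoothed product).} A direct estimate on the Fourier side shows
\[
\sigma_a\sigma_b \;=\; \sigma_{ab} \;+\; \epsilon\, r_2,\qquad r_2\in\Gamma^{\mu+\nu-1}_0,
\]
because the Bony smoothings of $a$ and $b$ commute with pointwise multiplication up to commutators of exactly the same type as in Step~2. Combining Steps~1--3, the difference $T^{\epsilon,\gamma}_a\circ T^{\epsilon,\gamma}_b - T^{\epsilon,\gamma}_{ab}$ is $\mathrm{op}_\epsilon(\epsilon(r_1+r_2))$, and Proposition~\ref{j9} applied at symbol order $\mu+\nu-1$ yields the bound \eqref{j12}. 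The parabolic case proceeds identically with $\Lambda(\zeta)$ and $\|\alpha\|$ in place of $\langle\zeta\rangle$ and $|\alpha|$, since the smoothing and Taylor expansion depend only on formal symbol structure.

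\textbf{Main obstacle and special case.} The main technical point is tracking how the parameter $\epsilon$ is introduced into the expansion: since the quantization \eqref{j5} inserts $\epsilon$ through $a(x,\epsilon\zeta)$, every $\partial_{t,y}$ in the symbolic composition is paired with a factor $\epsilon\langle\epsilon\zeta\rangle$, and one must verify that this pairing precisely converts the nominal ``one lost order'' into the gain $\epsilon\cdot\langle\epsilon\zeta\rangle^{-1}\cdot\langle\epsilon\zeta\rangle^{\mu+\nu}$ claimed by \eqref{j12}. This is exactly why the hypothesis $a,b\in\Gamma^\mu_1,\Gamma^\nu_1$ (one $(t,y)$ derivative available) suffices. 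In the special case where $b$ is independent of $(t,y)$, the smoothed symbol $\sigma_b$ equals $b$, so $\partial_{t,y}\sigma_b\equiv 0$ and both correction terms $r_1$, $r_2$ vanish identically, giving the exact identity $T^{\epsilon,\gamma}_a\circ T^{\epsilon,\gamma}_b = T^{\epsilon,\gamma}_{ab}$ asserted at the end of the proposition.
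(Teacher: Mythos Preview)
The paper does not actually prove this proposition: Appendix~\ref{s:paradiff_app} is explicitly described as a ``r\'esum\'e'' reproduced from \cite[\S4.2]{GMWZ2}, and the reader is referred to \cite[\S3.1]{MZ} for proofs. So there is no in-paper argument to compare against.

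Your sketch is a correct outline of the standard proof as it appears in \cite{MZ}: Bony para-smoothing to obtain spectrally localized symbols, semiclassical oscillatory-integral composition with a first-order Taylor remainder, and a separate estimate on $\sigma_a\sigma_b-\sigma_{ab}$, all reduced to the action bound of Proposition~\ref{j9}. The tracking of the $\epsilon$ factor and the special case $b$ independent of $(t,y)$ are handled exactly as in the reference. There is nothing to correct; your proposal is essentially the argument the paper is citing.
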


\begin{prop}[Adjoints]\label{j13}
Let $a^*$ denote the adjoint of the matrix symbol
$a\in\Gamma^\mu_1$ and let $(T^{\epsilon,\gamma}_a)^*$ be the
adjoint operator of $T^{\epsilon,\gamma}_a$. There is a $C$ such
that for $\epsilon\in (0,1]$, $\gamma\geq 1$ and $u\in\cH^s$:
\begin{align}\label{j14}
\|((T^{\epsilon,\gamma}_a)^*-T^{\epsilon,\gamma}_{a^*}u\|_{s-\mu+1,\epsilon,\gamma}\leq
C\epsilon\|u\|_{s,\epsilon,\gamma}.
\end{align}

The same inequality is true for adjoints of operators
$P^{\epsilon,\gamma}_a$ acting on $u\in P\cH^s$.
\end{prop}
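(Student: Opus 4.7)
The plan is to adapt the classical Bony-style proof of the pseudodifferential adjoint formula to the present semiclassical setting, following the same pattern used for compositions in Proposition~\ref{j11}. The first step is to replace the rough symbol $a(t,y,x,\zeta)$ by its Bony regularization $\sigma_a(t,y,x,\zeta)$, obtained by smoothing in the $(t,y)$ variables with a Littlewood--Paley cutoff in the dual variables $(\tau',\eta')$ supported in $|(\tau',\eta')|\lesssim \delta\langle\zeta\rangle$ (respectively $\Lambda(\zeta)$ in the parabolic case). The crucial structural fact, which is where the hypothesis $a\in\Gamma^\mu_1$ enters, is that $\sigma_a$ then belongs to a classical pseudodifferential class with uniform estimates, and that $\partial_{t,y}\sigma_a$ satisfies the same type of bound as $\sigma_a$ but with one additional order of $\zeta$-decay gained per cotangent derivative, since the single $(t,y)$-derivative of $a$ is already controlled.

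Second, use the standard oscillatory integral formula for the adjoint of a semiclassical pseudodifferential operator. After rescaling by $\epsilon$, this writes $(T^{\epsilon,\gamma}_a)^*$ as the operator with symbol
\[
\tilde a(t,y,x,\epsilon\zeta)=\frac{1}{(2\pi)^d}\iint e^{-i(t'\tau'+y'\eta')}\sigma_a^*\!\left(t+\epsilon t',\,y+\epsilon y',\,x,\,\epsilon\zeta+\epsilon(\tau',\eta')\right)\,dt'\,dy'\,d\tau'\,d\eta'.
\]
Performing a first-order Taylor expansion of $\sigma_a^*$ around the base point $(t,y,\epsilon\zeta)$ extracts the principal term, which is precisely the symbol of $T^{\epsilon,\gamma}_{a^*}$, plus a remainder of the form $\epsilon\,r(t,y,x,\epsilon\zeta)$ with $r$ built from $\partial_{t,y}\partial_\zeta \sigma_a^*$ and an explicit integral over the cotangent variables.

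Third, apply the action bound of Proposition~\ref{j9} to the remainder symbol. Each $\partial_\zeta$ gains one order in the symbol class (a factor $\langle\zeta\rangle^{-1}$ in the homogeneous case, $\Lambda(\zeta)^{-1}$ in the parabolic case), while the companion $\partial_{t,y}$ is absorbed using the $\Gamma^\mu_1$, respectively $P\Gamma^\mu_1$, hypothesis; each cotangent derivative carries a prefactor $\epsilon$ from the semiclassical rescaling. Combining these gains, the remainder operator maps $\cH^s\to\cH^{s-\mu+1}$ (respectively $P\cH^s\to P\cH^{s-\mu+1}$) with constant $C\epsilon$, which is exactly the estimate \eqref{j14}. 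The parabolic version is handled identically, with the single substitution of the $\Lambda$-adapted Bony smoothing as discussed after Definition~\ref{j2}.

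The main obstacle is careful bookkeeping of the semiclassical weights: one must verify that the asymmetric differentiation rules ($|\alpha|$ versus $\|\alpha\|$) leave the $\epsilon$-prefactor intact in both calculi, and that the Taylor remainder, which involves integration over the full dual $(\tau',\eta')$, generates no spurious $\gamma$-dependence. This accounting is routine but tedious, and is essentially identical to the composition argument of Proposition~\ref{j11}; complete details in the same framework are given in \cite[\S 3.1]{MZ}, to which one reduces.
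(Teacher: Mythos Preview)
The paper does not actually prove this proposition: Appendix~\ref{s:paradiff_app} is, as stated in its opening paragraph, a verbatim r\'esum\'e of results from \cite[\S4.2]{GMWZ2}, with proofs deferred to \cite[\S3.1]{MZ}. Your sketch is a correct outline of the standard Bony-style adjoint argument that one finds there---regularize the symbol, write the adjoint as an oscillatory integral, Taylor expand to first order extracting $T^{\epsilon,\gamma}_{a^*}$ as principal part, and bound the remainder via Proposition~\ref{j9} using the $\Gamma^\mu_1$ hypothesis and the semiclassical $\epsilon$-gain---and you correctly point to \cite[\S3.1]{MZ} as the place where the details are carried out. So your proposal is consistent with, and in fact more explicit than, what the paper itself provides.
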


\begin{prop}[Commutators]\label{j14a}
For $a\in\Gamma^\mu_1$ and $u\in\cH^s$ we have
\begin{align}
[\partial,T^{\epsilon,\gamma}_a]u=T^{\epsilon,\gamma}_{\partial a}
u,
\end{align}
for $\partial=\partial_t$ or $\partial_{y_j}$.   A similar result
holds in the parabolic calculus.
\end{prop}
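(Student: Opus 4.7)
The plan is to work directly from the explicit integral representation of $T^{\epsilon,\gamma}_a$ in terms of the Bony-regularized symbol, and to exploit the fact that the regularization acts only on the $(t,y)$ variables as a Fourier multiplier (with $\zeta$ treated as a parameter), hence commutes exactly with the differentiations $\partial\in\{\partial_t,\partial_{y_j}\}$.

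Recall that to define $T^{\epsilon,\gamma}_a$ one first produces a regularized symbol
$$
\tilde a(t,y,x,\zeta)=\sigma_\epsilon(D_{t,y},\zeta)\,a(t,y,x,\zeta),
$$
where $\sigma_\epsilon(D_{t,y},\zeta)$ is a Fourier multiplier in $(t,y)$ (a Littlewood--Paley type low-frequency cutoff adapted to $|\epsilon\zeta|$, cf.\ \cite[Prop.~B.7]{MZ}), and then sets
$$
T^{\epsilon,\gamma}_a u(t,y,x)=\frac{1}{(2\pi)^d}\int e^{it\tau+iy\eta}\,\tilde a(t,y,x,\epsilon\zeta)\,\hat u(\tau,\eta,x)\,d\tau\,d\eta.
$$
Because $\sigma_\epsilon(D_{t,y},\zeta)$ is a constant-coefficient Fourier multiplier in $(t,y)$, it commutes with $\partial_t$ and with $\partial_{y_j}$; hence
$$
\partial \tilde a=\sigma_\epsilon(D_{t,y},\zeta)\,\partial a=\widetilde{\partial a}.
$$
In particular $\partial a\in\Gamma^\mu_0$ (one derivative of smoothness is available since $a\in\Gamma^\mu_1$) and the right-hand side $T^{\epsilon,\gamma}_{\partial a}u$ is well defined.

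The remaining step is a one-line calculation. Differentiating the integral representation of $T^{\epsilon,\gamma}_a u$ under the integral sign, one splits the derivative into the part hitting the exponential and the part hitting $\tilde a$:
$$
\partial\bigl(T^{\epsilon,\gamma}_a u\bigr)=\frac{1}{(2\pi)^d}\int e^{it\tau+iy\eta}\bigl(\,\tilde a\cdot\partial_{t,y}e^{\,\cdot}\;\text{term}\bigr)+\frac{1}{(2\pi)^d}\int e^{it\tau+iy\eta}\,(\partial\tilde a)\,\hat u,
$$
and identifies the first piece with $T^{\epsilon,\gamma}_a(\partial u)$ (since multiplication of $\hat u$ by $i\tau$ or $i\eta_j$ is the Fourier representation of $\partial u$) and the second with $T^{\epsilon,\gamma}_{\partial a}u$ by the commutation $\partial\tilde a=\widetilde{\partial a}$. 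Rearranging yields $[\partial,T^{\epsilon,\gamma}_a]u=T^{\epsilon,\gamma}_{\partial a}u$.

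The parabolic version is identical: the only change is that $\sigma_\epsilon$ is replaced by the parabolic low-frequency cutoff adapted to $\Lambda(\epsilon\zeta)$, which is again a Fourier multiplier in $(t,y)$ and therefore still commutes with $\partial$. No obstacle is anticipated; the statement is an exact identity (not an $O(\epsilon)$ remainder as in Propositions~\ref{j11} and \ref{j13}) precisely because the Bony smoothing is implemented as a Fourier multiplier in the very variables being differentiated.
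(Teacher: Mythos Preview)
Your argument is correct and is the standard one: the Bony regularization is a Fourier multiplier in the $(t,y)$ variables, hence commutes exactly with $\partial_t$ and $\partial_{y_j}$, after which differentiation under the integral sign gives the identity. Note, however, that the paper does not supply its own proof of this proposition; Appendix~\ref{s:paradiff_app} is a r\'esum\'e quoted from \cite[\S4.2]{GMWZ2}, with all proofs deferred to \cite[\S3.1]{MZ}. The argument you wrote is essentially the one found there, so there is nothing to compare against in the present paper.
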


\begin{prop}[G\aa rding inequalities]\label{j15}
Consider symbols $a\in\Gamma^\mu_1$ and $w\in\Gamma^0_1$.  Suppose
that there is $\chi\in\Gamma^0_1$ and $c>0$ such that $\chi\geq
0$, $\chi w=w$ and
\begin{align}\label{j16}
\chi^2(t,y,x,\zeta)\Re a(t,y,x,\zeta)\geq
c\chi^2(t,y,x,\zeta)\langle\zeta\rangle^\mu\text{ for all
}(t,y,x,\zeta).
\end{align}
Then there is $C$ such that for all $\epsilon\in (0,1]$,
$\gamma\geq 1$ and $u\in\cH^{\mu/2}$:
\begin{align}\label{j17}
\frac{c}{2}\|T^{\epsilon,\gamma}_w
u\|^2_{\frac{\mu}{2},\epsilon,\gamma}\leq
\Re(T^{\epsilon,\gamma}_aT^{\epsilon,\gamma}_w
u,T^{\epsilon,\gamma}_w
u)_{L^2}+C\epsilon^2\|u\|^2_{\frac{\mu}{2}-1,\epsilon,\gamma}.
\end{align}

The same inequality holds for operators $P^{\epsilon,\gamma}_a$
acting on $u\in P\cH^{\mu/2}$.

\end{prop}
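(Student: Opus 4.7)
The plan is to prove \eqref{j17} by the classical square-root/sharp-G{\aa}rding strategy, adapted to the semiclassical paradifferential calculus of Propositions~\ref{j9}--\ref{j14a}. The parabolic case will be identical after the substitutions $\langle\zeta\rangle\mapsto\Lambda(\zeta)$ and $|\alpha|\mapsto\|\alpha\|$ throughout, so I restrict attention to the homogeneous case.

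First, I would symmetrize. By Proposition~\ref{j13}, $(T^{\epsilon,\gamma}_a)^* = T^{\epsilon,\gamma}_{a^*} + O(\epsilon)$, and writing $\Re\bigl(T^{\epsilon,\gamma}_aT^{\epsilon,\gamma}_wu,T^{\epsilon,\gamma}_wu\bigr) = \tfrac{1}{2}\bigl((T^{\epsilon,\gamma}_a+(T^{\epsilon,\gamma}_a)^*)T^{\epsilon,\gamma}_wu,T^{\epsilon,\gamma}_wu\bigr)$, I reduce matters to estimating $\bigl(T^{\epsilon,\gamma}_{\Re a}T^{\epsilon,\gamma}_wu,T^{\epsilon,\gamma}_wu\bigr)$ from below, with a Cauchy--Schwarz error of order $\epsilon\|T^{\epsilon,\gamma}_wu\|_{\mu/2-1/2,\epsilon,\gamma}^2$. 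Second, I would manufacture a globally positive symbol whose square root lies in $\Gamma^{\mu/2}_1$. Set $\tilde a := \Re a - (c/2)\langle\zeta\rangle^\mu$, so the hypothesis gives $\chi^2\tilde a \ge (c/2)\chi^2\langle\zeta\rangle^\mu \ge 0$ pointwise. Choose $\tilde\chi\in\Gamma^0_1$ with $\tilde\chi\equiv 1$ on a neighborhood of $\supp w$ and $\supp\tilde\chi\subset\{\chi>0\}$; this is possible because $\chi w=w$ forces $w\equiv 0$ on $\{\chi=0\}$. Define
$$
A \;:=\; \tilde\chi^2\,\tilde a \;+\; (1-\tilde\chi^2)\,(c/2)\langle\zeta\rangle^\mu \;\in\; \Gamma^\mu_1,
$$
so that $A\ge (c/2)\langle\zeta\rangle^\mu>0$ globally and $A=\tilde a$ on $\supp w$. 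The uniform ellipticity $A\gtrsim\langle\zeta\rangle^\mu$ guarantees that $b:=A^{1/2}\in\Gamma^{\mu/2}_1$, a standard symbolic computation.

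Third, I would combine the two ingredients with the composition proposition. Propositions~\ref{j11} and \ref{j13} yield $T^{\epsilon,\gamma}_A = (T^{\epsilon,\gamma}_b)^*T^{\epsilon,\gamma}_b + O(\epsilon)$ as operators between appropriate Sobolev spaces, so
$$
\bigl(T^{\epsilon,\gamma}_A T^{\epsilon,\gamma}_w u,T^{\epsilon,\gamma}_w u\bigr) \;\ge\; \|T^{\epsilon,\gamma}_bT^{\epsilon,\gamma}_w u\|_{L^2}^2 \;-\; C\epsilon\,\|T^{\epsilon,\gamma}_wu\|_{\mu/2-1/2,\epsilon,\gamma}^2 \;\ge\; -C\epsilon^2\|u\|_{\mu/2-1,\epsilon,\gamma}^2.
$$
To replace $A$ by $\Re a - (c/2)\langle\zeta\rangle^\mu$, note that the symbol $(A-\tilde a)w = (1-\tilde\chi^2)(\tilde a - (c/2)\langle\zeta\rangle^\mu)w$ vanishes identically because $(1-\tilde\chi^2)w=0$; Proposition~\ref{j11} then gives $T^{\epsilon,\gamma}_AT^{\epsilon,\gamma}_w = T^{\epsilon,\gamma}_{\Re a}T^{\epsilon,\gamma}_w - (c/2)T^{\epsilon,\gamma}_{\langle\zeta\rangle^\mu}T^{\epsilon,\gamma}_w + O(\epsilon)$ at the level of the relevant pairing. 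Using $(T^{\epsilon,\gamma}_{\langle\zeta\rangle^\mu}T^{\epsilon,\gamma}_w u,T^{\epsilon,\gamma}_w u) = \|T^{\epsilon,\gamma}_wu\|_{\mu/2,\epsilon,\gamma}^2 + O(\epsilon)$-errors, assembling all these inequalities and absorbing the $O(\epsilon)\|T^{\epsilon,\gamma}_wu\|_{\mu/2-1/2}^2$ terms into $(c/2)\|T^{\epsilon,\gamma}_wu\|_{\mu/2}^2/2$ via Young's inequality, and into $C\epsilon^2\|u\|_{\mu/2-1,\epsilon,\gamma}^2$ via the action estimate of Proposition~\ref{j9}, yields \eqref{j17}.

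The main obstacle is the careful accounting of $\epsilon$-powers. Each application of the composition or adjoint proposition gains exactly one factor of $\epsilon$, whereas the target error in \eqref{j17} is $O(\epsilon^2)$; getting there requires that the remaining factor of $\epsilon$ be paid for by trading one power of $\|T^{\epsilon,\gamma}_wu\|_{\mu/2}$ (absorbed into the left side) against one power of $\|u\|_{\mu/2-1}$ via Young. A subtler technical point is ensuring that the modification producing $A$ preserves membership in $\Gamma^\mu_1$ (finite tangential regularity only), which follows from $\chi,w\in\Gamma^0_1$ and the fact that $A^{1/2}$ is smooth in $\zeta$ by global positivity; this is the place where the hypothesis $\chi w=w$, rather than mere positivity somewhere, is essential.
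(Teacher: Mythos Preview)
The paper does not actually prove this proposition: Appendix~\ref{s:paradiff_app} is explicitly a r\'esum\'e quoted from \cite{GMWZ2}, with the remark ``For an extended version, including proofs, see \cite[\S3.1]{MZ}.'' So there is no in-paper argument to compare against.

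That said, your proposal is the standard square-root argument for sharp G{\aa}rding in a semiclassical calculus, and it is essentially the route taken in \cite{MZ}: symmetrize via Proposition~\ref{j13}, modify the symbol off $\supp w$ to a globally elliptic positive $A\in\Gamma^\mu_1$, take $b=A^{1/2}\in\Gamma^{\mu/2}_1$, factor $T_A=(T_b)^*T_b+O(\epsilon)$ by Propositions~\ref{j11}--\ref{j13}, and recover the $\epsilon^2$ error by trading one $\epsilon$ against one Sobolev index via Young's inequality and the action estimate. Your bookkeeping of the $\epsilon$-powers is correct; the key point, which you identify, is that each commutator/adjoint remainder is $O(\epsilon)$ of order one lower, so pairing against $T_w u$ produces a cross term $\epsilon\|T_w u\|_{\mu/2}\|u\|_{\mu/2-1}$ that splits as desired.

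One small point to tighten: the existence of $\tilde\chi\in\Gamma^0_1$ with $\tilde\chi\equiv1$ near $\supp w$ and $\supp\tilde\chi\subset\{\chi>0\}$ is not automatic from $\chi w=w$ alone---you need a uniform gap between $\supp w\subset\{\chi=1\}$ and $\{\chi=0\}$. In the applications here $\chi$ is a standard cutoff and this is immediate, but in a self-contained statement you would either assume this separation or, more simply, replace the global modification by $A:=\chi^2\tilde a+(1-\chi^2)(c/2)\langle\zeta\rangle^\mu$ directly and use $\chi w=w$ together with Proposition~\ref{j11} to control $(T_A-T_{\tilde a})T_w$.
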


\subsubsection{Paraproducts}  Paraproducts are paradifferential
operators associated to symbols independent of $\zeta$.  The
following two propositions are used to estimate the errors
introduced in the passage from differential operators to their
paradifferential counterparts.  They can also be used to estimate
errors caused by passage from one calculus to the other.

\begin{defn}\label{j18}
For $k\in\bN$, let $\cW^k$ denote the space of functions
$a(t,y,x)$ on $\bR^{d+1}$ such that $\partial_{t,y}^\beta a\in
L^\infty(\bR^{d+1})$ for $|\beta|\leq k$.  
\end{defn}

Observe that
\begin{align}\label{j19}
\cW^k\subset \Gamma^0_k\cap P\Gamma^0_k.
\end{align}

\begin{prop}[Homogeneous paraproducts]\label{j20}
For any $a\in\cW^1$ there is a constant $C$ such that for all
$\epsilon\in (0,1]$, $\gamma\geq 1$ and $u\in\cH^1$:
\begin{align}\label{j21a}
\begin{split}
&\|au-T^{\epsilon,\gamma}_a u\|_{1,\epsilon,\gamma}\leq
C\epsilon\|u\|_{0,\epsilon,\gamma},\\
&\gamma\|au-T^{\epsilon,\gamma}_a
u\|_{0,\epsilon,\gamma}+\|a\partial
u-T^{\epsilon,\gamma}_a\partial u\|_{0,\epsilon,\gamma}\leq
C\|u\|_{0,\epsilon,\gamma}, \text{ for }\partial=\partial_t\text{
or }\partial_{y_j}.
\end{split}
\end{align}
\end{prop}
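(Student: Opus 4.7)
The plan is to apply Bony's paraproduct decomposition
\[
au \;=\; T^{\epsilon,\gamma}_a u \;+\; T^{\epsilon,\gamma}_u a \;+\; R^{\epsilon,\gamma}(a,u),
\]
adapted to the semiclassical $\epsilon$-quantization, so that
\[
au - T^{\epsilon,\gamma}_a u \;=\; T^{\epsilon,\gamma}_u a + R^{\epsilon,\gamma}(a,u),
\]
and to estimate the two error pieces, namely $T^{\epsilon,\gamma}_u a$ (high-low interaction with $a$ at high frequency) and $R^{\epsilon,\gamma}(a,u)$ (high-high interaction), using the regularity hypothesis $\partial_{t,y}a\in L^\infty$.

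Concretely, I would introduce a dyadic Littlewood--Paley partition of unity $\{\Delta_k\}_{k\geq-1}$ in the semiclassical $(t,y)$-frequencies, with $\Delta_k$ localizing to $|\epsilon(\tau,\eta)|\sim 2^k$, and set $S_k=\sum_{j\leq k}\Delta_j$. In this decomposition, the paraproduct has the schematic form $T^{\epsilon,\gamma}_a u \sim \sum_k S_{k-N}a\cdot\Delta_k u$ for a fixed separation $N$, so the error pieces are $T^{\epsilon,\gamma}_u a\sim\sum_k S_{k-N}u\cdot\Delta_k a$ and $R^{\epsilon,\gamma}(a,u)\sim\sum_{|k-k'|\leq N}\Delta_k a\cdot\Delta_{k'}u$. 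The key ingredient is the Bernstein-type estimate
\[
\|\Delta_k a\|_{L^\infty_{t,y}}\;\lesssim\;\epsilon\,2^{-k}\,\|\partial_{t,y}a\|_{L^\infty}\qquad(k\geq 0),
\]
which expresses that each dyadic block of $a$ inherits one derivative gain from the $\cW^1$ regularity, amplified by the semiclassical factor $\epsilon$.

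For the first inequality, multiplying the $L^2$-estimate of each dyadic piece by the output weight $\langle\epsilon\zeta\rangle\sim 2^k$ cancels the $2^{-k}$ and leaves a net factor of $\epsilon$; a nearly-orthogonal summation in $k$ then yields $\|au-T^{\epsilon,\gamma}_a u\|_{1,\epsilon,\gamma}\lesssim\epsilon\|u\|_{0,\epsilon,\gamma}$. For the second inequality, the $\gamma$-part follows from the first bound via $\gamma\leq\epsilon^{-1}\langle\epsilon\zeta\rangle$ (using that $\gamma$ is a component of $\zeta$ in the definition of $\langle\zeta\rangle$). The commutator-like term $\|a\partial u - T^{\epsilon,\gamma}_a\partial u\|_{0,\epsilon,\gamma}$ is handled analogously by shifting the derivative onto $a$ in the decomposition: the $\zeta$-factor produced by $\partial$ cancels exactly against the $\epsilon 2^{-k}$ gain of $\|\Delta_k a\|_{L^\infty}$, yielding an $L^2$-bound by $\|u\|_{0,\epsilon,\gamma}$ after summation.

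The main technical care is required in the low-frequency regime $\langle\epsilon\zeta\rangle\sim 1$, corresponding to dyadic bands $k\leq 0$, where one has only the crude bound $\|\Delta_k a\|_{L^\infty}\lesssim\|a\|_{L^\infty}$ with no $\epsilon$-gain, but where the weight on the output side also provides no gain; there the estimate is immediate from boundedness of multiplication by $L^\infty$ on $L^2$ combined with uniform $L^2$-boundedness of the Littlewood--Paley projectors. The complete verification, including the careful treatment of the $(t,y)$-smoothing of $a$ required when $a$ depends on $(t,y)$, follows the argument for the analogous homogeneous paraproduct bounds in \cite[\S 3.1]{MZ}, to which the authors explicitly appeal for details.
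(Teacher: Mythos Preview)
Your sketch is correct and matches the standard Bony paraproduct argument that underlies this statement. Note that the paper itself does not supply a proof of this proposition: the appendix is explicitly a r\'esum\'e quoted from \cite[\S4.2]{GMWZ2}, with the proofs deferred to \cite[\S3.1]{MZ}, which is exactly the reference you invoke at the end. So there is no ``paper's own proof'' to compare against beyond that citation, and your outline is precisely the argument one finds there: semiclassical Littlewood--Paley decomposition, the Bernstein-type gain $\|\Delta_k a\|_{L^\infty}\lesssim\epsilon 2^{-k}\|\partial_{t,y}a\|_{L^\infty}$ from $a\in\cW^1$, and almost-orthogonal summation. Your derivation of the $\gamma$-term in the second line from the first line via $\gamma\leq\epsilon^{-1}\langle\epsilon\zeta\rangle$ is clean and correct.
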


\begin{prop}[Parabolic paraproducts]\label{j21}
For any $a\in\cW^1$ there is a constant $C$ such that for all
$\epsilon\in (0,1]$, $\gamma\geq 1$ and $u\in P\cH^1$:
\begin{align}\label{j21b}
\begin{split}
&\|au-P^{\epsilon,\gamma}_a u\|_{1,\epsilon,\gamma}\leq
C\epsilon\|u\|_{0,\epsilon,\gamma},\\
&\|a\partial_{y_j} u-P^{\epsilon,\gamma}_a\partial_{y_j}
u\|_{0,\epsilon,\gamma}\leq C\|u\|_{0,\epsilon,\gamma},\\
&\gamma\|au-P^{\epsilon,\gamma}_a
u\|_{0,\epsilon,\gamma}+\|a\partial_t
u-P^{\epsilon,\gamma}_a\partial_t
u\|_{0,\epsilon,\gamma}+\sum_{|\alpha|=2}\epsilon\|a\partial_y^\alpha
u-P_a^{\epsilon,\gamma}\partial_y^\alpha
u\|_{0,\epsilon,\gamma}\leq C\|u\|_{1,\epsilon,\gamma}.
\end{split}
\end{align}
\end{prop}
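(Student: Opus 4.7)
The proof follows Bony's paraproduct strategy in the form used in \cite{MZ}, adapted to the anisotropic parabolic weight $\Lambda(\epsilon\zeta)=(1+\tau^2+\gamma^2+|\eta|^4)^{1/4}$. I would begin by fixing the parabolic Littlewood--Paley decomposition in $(t,y)$: let $\Delta_k$ be dyadic cutoffs at parabolic frequency $\Lambda(\epsilon\zeta)\sim 2^k$ and $S_k=\sum_{j<k}\Delta_j$. The smoothing used to define $P_a^{\epsilon,\gamma}$ then gives, via Bony's decomposition,
\begin{equation*}
au - P_a^{\epsilon,\gamma} u \;=\; \sum_k(S_{k-N} u)\,\Delta_k a \;+\; \sum_{|j-k|\leq N}\Delta_j a\,\Delta_k u \;=:\; T_u^{\epsilon,\gamma} a + R(a,u),
\end{equation*}
for a fixed integer $N$, reducing the three estimates to controlling the low--high term $T_u^{\epsilon,\gamma} a$ and the resonant remainder $R(a,u)$ in the appropriate $P\cH^s$ norms.

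The technical core is a Bernstein-type bound: for $a\in\cW^1$ and any parabolic dyadic block at scale $\Lambda(\epsilon\zeta)\sim 2^k$, one has $\|\Delta_k a\|_{L^\infty}\leq C\,\epsilon\,2^{-k}\,\|a\|_{\cW^1}$. The proof observes that on the parabolic annulus of scale $2^k$ either $|\eta|\lesssim 2^k/\epsilon$ or $|\tau|,\gamma\lesssim 2^{2k}/\epsilon$; in the first regime one extracts a $\partial_{y_j}$ from the Fourier multiplier to gain a factor $\epsilon/2^k$ (using $\partial_y a\in L^\infty$), and in the second regime one extracts $\partial_t$ to gain $\epsilon/2^{2k}\leq \epsilon/2^k$ (using $\partial_t a\in L^\infty$). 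Either way, a single physical $(t,y)$-derivative absorbs one parabolic-order-one factor, producing the claimed gain.

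With this estimate the three inequalities of \eqref{j21b} follow by standard dyadic summation using almost-orthogonality of the $\Delta_k$ in $L^2$. Inequality (1) simply pairs the Bernstein gain $\epsilon\cdot 2^{-k}$ against one power of the parabolic Sobolev weight $\Lambda(\epsilon\zeta)\sim 2^k$, giving the factor $\epsilon$ on the right. For (2), the operator $\partial_{y_j}$ has parabolic order one, contributing $2^k/\epsilon$ when acting on $\Delta_k u$, which exactly cancels the Bernstein gain and leaves a bounded-norm estimate. For (3), the weights $\gamma$, $\partial_t$, and $\epsilon\partial_y^\alpha$ with $|\alpha|=2$ are all parabolic order two and so absorb two powers of $2^k/\epsilon$; the residual $2^k$ is precisely the $P\cH^1$ weight on $u$. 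The resonant remainder $R(a,u)$ is handled by summing the same Bernstein bound over the diagonal $j\sim k$ and invoking $\cW^1\hookrightarrow L^\infty$, and introduces no new difficulty.

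The main obstacle is the mismatch between the isotropic regularity hypothesis $a\in\cW^1$ (both $\partial_t a$ and $\partial_y a$ only in $L^\infty$, each counted as a single derivative) and the anisotropic parabolic calculus (in which $\partial_t$ carries order two while $\partial_y$ carries order one). This asymmetry is precisely what forces \eqref{j21b} to split into three distinct lines with different weights $\epsilon$, $1$, and $\gamma + \epsilon\partial_y^\alpha + \partial_t$ on the left-hand side; keeping the bookkeeping correct through the dyadic sums and the parabolic frequency regime split is where almost all the care is needed, while the high--high resonant term $R(a,u)$ is routine by comparison.
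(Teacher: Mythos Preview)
The paper does not actually prove this proposition. Appendix~\ref{s:paradiff_app} is explicitly a ``r\'esum\'e'' quoted essentially verbatim from \cite[\S4.2]{GMWZ2}, and the introductory paragraph states that for proofs one should consult \cite[\S3.1]{MZ}. Proposition~\ref{j21} is simply listed among the tools of the parabolic calculus, with no argument given.

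Your sketch is the standard Bony paraproduct argument adapted to the parabolic weight, and is essentially the approach one finds in \cite{MZ}: write $au-P^{\epsilon,\gamma}_a u$ as the reverse paraproduct $T^{\epsilon,\gamma}_u a$ plus a diagonal remainder, exploit a Bernstein-type gain $\|\Delta_k a\|_{L^\infty}\lesssim \epsilon\,2^{-k}\|a\|_{\cW^1}$ coming from one physical $(t,y)$-derivative on $a$, and then sum dyadically. The splitting of the annulus into a $|\eta|$-dominant and a $|\tau|$-dominant regime, with the corresponding gains $\epsilon/2^k$ and $\epsilon/2^{2k}$, is exactly the mechanism responsible for the anisotropic structure of the three lines in \eqref{j21b}, and you have identified correctly why the hypothesis $a\in\cW^1$ (isotropic in $(t,y)$) interacts with the parabolic scaling to produce three separate estimates rather than a single one. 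There is nothing to compare against in the present paper; your outline matches the argument to which the paper defers.
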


\begin{rem}\label{j22}
The difference between the above two propositions is due to the
fact that the symbol $i\tau+\gamma$ is of order two in the
parabolic calculus, but of order one in the homogeneous calculus.
\end{rem}


\end{document}